\documentclass[12pt,reqno]{amsart}
\usepackage{}
\usepackage{amsmath}
\usepackage{mathrsfs}
\usepackage{amssymb}
\usepackage{amsthm}
\usepackage{mathrsfs}
\usepackage[centertags]{amsmath}
\usepackage{amsfonts}
\usepackage[numbers,sort&compress]{natbib}
\usepackage{color}
\usepackage{extarrows}
\usepackage{fullpage}
\usepackage[colorlinks=true,  linkcolor=blue, citecolor=blue]{hyperref}
\input amssym.def
\input amssym.tex

\date{}

\newtheorem{Theorem}{Theorem}[section]

\newtheorem{Corollary}{Corollary}[section]
\newtheorem{Lemma}{Lemma}[section]

\newcommand\R{\mbox{\bf R}}

\newcommand\Z{\mbox{\bf Z}}

\newcommand\SR{\mbox{\scriptsize\bf R}}

\newcommand{\definition}{{\lower .5ex
  \hbox{$\>\>\stackrel{\triangle}{=}\>\>$} }}
\newcommand\supp{\mathop{\rm supp}}



\allowdisplaybreaks
\begin{document}

\baselineskip=22pt
\thispagestyle{empty}

\begin{center}
{\Large \bf  Almost sure spatial decay and  almost sure nonlinear
smoothing of  some stochastic dispersive equations}\\[1ex]

{Xiangqian Yan\footnote{Email: yanxiangqian213@126.com}$^{a}$,\,Yongsheng Li\footnote{Email: yshli@scut.edu.cn}$^{a}$,\,Jianhua Huang\footnote{Email:  jhhuang32@nudt.edu.cn}$^{b}$,\,Wei Yan\footnote{Email: 011133@htu.edu.cn}$^{c*}$}\\[1ex]

{$^a$School of Mathematics,
 South China University of Technology,}\\
 {Guangzhou, Guangdong 510640, P. R. China}\\[1ex]

{$^b$College of Sciences, National University of Defense Technology,}\\
{Changsha, Hunan 410073, P. R.  China}\\[1ex]

{$^{c*}$School of Mathematics and Statistics, Henan
Normal University,}\\
{Xinxiang, Henan 453007, P. R.  China}\\[1ex]

\end{center}
\noindent{\bf Abstract.}
In this paper, we consider the almost sure nonlinear smoothing, the almost sure spatial decay and the almost sure uniform convergence of  the stochastic mKdV equation and the stochastic cubic KdV-Benjamin-Ono equation.
Firstly, for initial data $g\in H^{s}(\R)(s\geq\frac{1}{4})$ and $\Phi_{2}\in L_{2}^{0,s}$, we prove the local well-posedness for the stochastic cubic KdV-Benjamin-Ono equation.
Secondly, we establish the almost sure nonlinear smoothing of the stochastic mKdV equation and the stochastic cubic KdV-Benjamin-Ono equation.
Finally, by using the almost sure nonlinear smoothing, we obtain the almost sure spatial decay and the almost sure uniform convergence of the integral term in the pathwise solutions to the stochastic mKdV equation and the stochastic cubic KdV-Benjamin-Ono equation. More precisely, we have the following results: for the stochastic mKdV equation, let $s>\frac{1}{3}$, $f\in H^{s}(\R)$ and $\Phi_{1}\in L_{2}^{0,s}$. Then, the local pathwise solution $u$  satisfies
\begin{eqnarray*}
&&\mathbb{P}\Big(\Big\{\omega: \lim_{t\rightarrow0}\Big\|u-U(t)f-\int_{0}^{t}U(t-s)\Phi_{1}dW(s)\Big\|_{L_{x}^{\infty}}=0\Big\}\Big)=1,\\
&&\mathbb{P}\Big(\Big\{\omega: \forall t\in[0,T_{\omega}], \lim_{|x|\rightarrow\infty}\Big(u-U(t)f-\int_{0}^{t}U(t-s)\Phi_{1}dW(s)\Big)=0\Big\}\Big)=1.
\end{eqnarray*}
For the stochastic cubic KdV-Benjamin-Ono equation, let $s>\frac{1}{3}$, $g\in H^{s}(\R)$ and $\Phi_{2}\in L_{2}^{0,s}$. Then, the local pathwise solution $v$ satisfies
\begin{eqnarray*}
&&\mathbb{P}\Big(\Big\{\omega: \lim_{t\rightarrow0}\Big\|v-V(t)g-\int_{0}^{t}V(t-s)\Phi_{2}dW(s)\Big\|_{L_{x}^{\infty}}=0\Big\}\Big)=1,\\
&&\mathbb{P}\Big(\Big\{\omega: \forall t\in[0,T_{\omega}],\lim_{|x|\rightarrow\infty}\Big(v-V(t)g-\int_{0}^{t}V(t-s)\Phi_{2}dW(s)\Big)=0\Big\}\Big)=1.
\end{eqnarray*}

 \bigskip

\noindent {\bf Keywords}: Stochastic mKdV equation; Stochastic cubic KdV-Benjamin-Ono equation;
Nonlinear smoothing estimates; Almost sure spatial decay; Almost sure uniform convergence
\medskip

\medskip
\noindent {\bf Corresponding Author:} Wei Yan

\medskip
\noindent {\bf Email Address:} 011133@htu.edu.cn

\medskip
\noindent {\bf MSC2020-Mathematics Subject Classification}: 60H15; 35Q53

\leftskip 0 true cm \rightskip 0 true cm

\newpage

\baselineskip=20pt

\bigskip
\bigskip
\tableofcontents

\section{Introduction and the main results}
\bigskip

\setcounter{Theorem}{0} \setcounter{Lemma}{0}\setcounter{Definition}{0}\setcounter{Proposition}{1}

\setcounter{section}{1}

\subsection{Research background}
In this paper, we consider the almost sure nonlinear smoothing, the almost sure spatial decay
and the almost sure uniform convergence for the stochastic mKdV equation
\begin{eqnarray}
&&\frac{du}{dt}+\partial_{x}^{3}u+\partial_{x}(u^{3})=\Phi_{1} \frac{d W}{dt},\label{1.01}\\
&&u(0,x)=f(x),\label{1.02}
\end{eqnarray}
and the stochastic cubic KdV-Benjamin-Ono(KdV-BO) equation
\begin{eqnarray}
&&\frac{dv}{dt}+\nu\mathscr{H}(\partial_{x}^{2}v)+\mu\partial_{x}^{3}v
+\partial_{x}(v^{3})=\Phi_{2}\frac{d W}{dt},\label{1.03}\\
&&v(0,x)=g(x),\label{1.04}
\end{eqnarray}
where
\begin{eqnarray*}
&&W(t)=\sum_{i=1}^{\infty}\beta_{i}e_{i}
\end{eqnarray*}
is a cylindrical Wiener process on $L^{2}$. $(e_{j}(x))_{j=1}^{\infty}$ is an orthonormal
basis of  $L^{2}(\R)$, and $(\beta_{j})_{j=1}^{\infty}$ is a sequence of mutually independent
 real Brownian motions on a fixed probability space. $\Phi_{1}$ and $\Phi_{2}$ are linear
 operators on $L^{2}$. The constants $\nu, \mu\in\R$ satisfy $\mu\neq0$; without loss of
 generality we take $\mu=1$ throughout the article. Finally, $\mathscr{H}$ denotes the Hilbert
 transform, which is defined as follows
\begin{eqnarray*}
&&\mathscr{H}f(x)=p.v.\frac{1}{\pi}\int\frac{f(x-y)}{y}dy.
\end{eqnarray*}

By choosing suitable function spaces and Banach fixed point argument,
Kenig et al. \cite{KPV1993} established the local well-posedness of  mKdV  in $H^{s}(\R)$
with $s\geq\frac{1}{4}$.
  Kenig et al. \cite{KPV2001} established the ill-posednes of mKdV
 in  $H^{s}(\R)$ with $s < \frac{1}{4}$
 in the sense   that the solution map
 fails to be uniformly continuous. By using the Fourier restriction norm method and Banach
 fixed point argument,
 Guo and Huo \cite{GH2004} established the local well-posedness   of the Cauchy problem for
 the cubic KdV-BO equation for initial data in $H^{s}(\R)$ with $s\geq\frac{1}{4}$.

 de Bouard and  Debussche \cite{BD1998} employed the martingale solution and compactness method
  to establish the well-posedness framework for the stochastic KdV equation with multiplicative
  noise. By employing the Fourier restriction norm method introduced by Bourgain \cite{B1993},
 stochastic convolution estimates, the fixed-point theorem, and stopping time techniques,
 de Bouard    et al. \cite{BDT1999}
  established the well-posedness of the Cauchy problem for the  stochastic KdV equation with
  additive noise in the  $L^{2}(\R)$.

Recently,
for $f\in H^{\frac{1}{4}}(\R)$ and $\Phi_{1}\in L_{2}^{0,1+\epsilon}$, by using the
suitable function spaces,  Millet and Roudenko
\cite{MR2018} established the local well-posedness of the
Cauchy problem for the white noise-driven stochastic mKdV equation, and they also proved the
 global well-posedness of the Cauchy problem for the stochastic mKdV and the stochastic quartic
  KdV  with the additive noise in the $H^{1}(\R)$. Very recently, for $f\in H^{s}(\R)(s\geq\frac{1}{4})$
  and $\Phi_{1}\in L_{2}^{0,s}$, by using the
  method developed by de Bouard et al. \cite{BDT1999}, Yan et al. \cite{YHG2021} established the local well-posedness of the
Cauchy problem for the white noise-driven stochastic mKdV equation. Moreover, when $f\in H^{1}(\R)$ and $\Phi_1 \in L_2^{0,1}$,
they also proved the
 global well-posedness of the Cauchy problem for the stochastic mKdV.
By applying the method developed by de Bouard et al. \cite{BDT1999}, Wang and Guo \cite{WG2010}
 established the
well-posedness theory of the Cauchy problem for the stochastic KdV-BO equation driven by white
noise. However, the Cauchy  problem for the
stochastic cubic KdV-BO equation has not been systematically studied.

\subsection{Motivations}
Our motivation stems primarily from the following two considerations. On the one hand,
Correia and Silva \cite{CS2020} established the nonlinear smoothing effect for the
deterministic mKdV equation. This prompts a natural question: can we obtain the almost
 sure nonlinear smoothing for the stochastic mKdV equation and the stochastic cubic KdV-BO equation?
  On the other hand, Yan et al. \cite{YYY2026,YLHHY2026} established the spatial decay property and
  the uniform convergence of the Cauchy problem for the generalized KdV equation and the generalized Ostrovsky equation.
  This leads to a parallel question: can we
  obtain the almost sure spatial decay and the  almost sure uniform convergence of the Cauchy problem for the stochastic
  mKdV equation and the stochastic cubic KdV-BO equation?
\subsection{Contributions}
In this paper, we provide affirmative answers to the above  questions.
Firstly, by using the method developed by de Bouard et al. \cite{BDT1999}  and assuming that the
initial data satisfy $g\in H^{s}(\R)(s\geq\frac{1}{4})$ and $\Phi_{2}\in L_{2}^{0,s}$,
 we establish the local well-posedness of the Cauchy problem for  the stochastic cubic
 KdV-BO equation.
Secondly, by using the Fourier restriction norm method introduced in \cite{B1983,B1993,KM1993,RR1982} and the high-low frequency
decomposition method, we establish the almost sure nonlinear smoothing  for the stochastic mKdV equation
 and the stochastic cubic KdV-BO equation. More precisely, we have the following results.
  Let $s\geq\frac{1}{4}$,\,$0<\epsilon\leq\frac{1}{2026}$. For the stochastic mKdV equation, with  $b=\frac{1}{2}+\epsilon$, $0\leq a\leq
\min\left\{2s-\frac{1}{2},1-64\epsilon\right\}$, $\Phi_{1}\in L_{2}^{0,s}$
and $f\in H^{s}(\R)$, we have
\begin{eqnarray*}
\mathbb{P}\left(\left\{\omega:  \|u_{1}\|_{X_{s+a,b,\varphi}^{T_{\omega}}}<\infty\right\}\right)=1,
\end{eqnarray*}
where the definition of $u_{1}$ can be found in \eqref{1.06}.
For the stochastic cubic KdV-BO equation, with  $b=\frac{1}{2}+\epsilon$,
$0\leq a\leq\min\left\{2s-\frac{1}{2},1-48\epsilon\right\}$, $\Phi_{2}\in L_{2}^{0,s}$
and $g\in H^{s}(\R)$, we have
\begin{eqnarray*}
\mathbb{P}\left(\left\{\omega: \|v_{1}\|_{X_{s+a,b,\phi}^{T_{\omega}}}<\infty\right\}\right)=1,
\end{eqnarray*}
where the definition of $v_{1}$ can be found in \eqref{1.011}.
Finally, by using the almost sure nonlinear smoothing, we obtain the almost sure spatial decay and the almost
sure uniform convergence of the stochastic mKdV equation and the stochastic cubic KdV-BO equation. More precisely,
we have the following results. Let $s>\frac{1}{3}$. For the stochastic mKdV equation, with  $\Phi_{1}\in L_{2}^{0,s}$
and $f\in H^{s}(\R)$, we have
\begin{eqnarray*}
&&\mathbb{P}\left(\left\{\omega: \lim_{t\rightarrow0}\left\|u_{1}\right\|_{L_{x}^{\infty}}=0\right\}\right)=1,\quad
\mathbb{P}\left(\left\{\omega: \forall t\in[0,T_{\omega}], \lim_{|x|\rightarrow\infty}u_{1}=0\right\}\right)=1,
\end{eqnarray*}
where the definition of $u_{1}$ can be found in \eqref{1.06}.
For the stochastic cubic KdV-BO equation, with $\Phi_{2}\in L_{2}^{0,s}$ and $g\in H^{s}(\R)$, we have
\begin{eqnarray*}
&&\mathbb{P}\left(\left\{\omega: \lim_{t\rightarrow0}\left\|v_{1}\right\|_{L_{x}^{\infty}}=0\right\}\right)=1,\quad
\mathbb{P}\left(\left\{\omega: \forall t\in[0,T_{\omega}],\lim_{|x|\rightarrow\infty}v_{1}=0\right\}\right)=1,
\end{eqnarray*}
where the definition of $v_{1}$ can be found in \eqref{1.011}.

\subsection{Introduction to notations and spaces}
Before stating the main results, we introduce the notation that will be used throughout the proofs.
$a\sim b$ means that there exists $C_{1}, C_{2}>0$ satisfying $C_{1}|a|\leq|b|\leq C_{2}|a|$. Let $\psi(t)$ be
a function in $C_{c}^{\infty}(\R)$ with $\psi(t)=1, t\in[0,1]$, $\psi(t)=0$, $t\geq2$.
We define $A:=\max\left\{1,\frac{2|\nu|}{3}\right\}$, $ \langle x\rangle:=(1+|x|^{2})^{\frac{1}{2}}$, $\phi(\xi):=-\nu\xi|\xi|+\xi^{3}$ and $\varphi(\xi):=\xi^{3}$,
\begin{eqnarray*}
&&\mathscr{F}_{x}f(\xi)=\frac{1}{\sqrt{2\pi}}\int_{\SR}e^{-ix\xi}f(x)dx,\, \mathscr{F}_{\xi}^{-1}f(x)=\frac{1}{\sqrt{2\pi}}\int_{\SR}e^{ix\xi}\mathscr{F}_{x}f(\xi)d\xi,\\
&&\mathscr{F}_{xt}f(\xi,\tau)=\frac{1}{2\pi}\int_{\SR^{2}}e^{-it\tau}e^{-ix\xi}f(x,t)dxdt,\, \mathscr{F}_{\xi\tau}^{-1}f(x,t)=\frac{1}{2\pi}\int_{\SR^{2}}e^{it\tau}e^{ix\xi}\mathscr{F}_{xt}f(\xi,\tau)d\xi d\tau,\\
&&P_{\leq 2A}f=\frac{1}{\sqrt{2\pi}}\int_{|\xi|\leq 2A}e^{ix\xi}\mathscr{F}_{x}f(\xi)d\xi,\,
 P_{\geq 2A}f=\frac{1}{\sqrt{2\pi}}\int_{|\xi|\geq 2A}e^{ix\xi}\mathscr{F}_{x}f(\xi)d\xi,\\
&&U(t)f=\frac{1}{\sqrt{2\pi}}\int_{\SR}e^{ix\xi}e^{it\varphi(\xi)}\mathscr{F}_{x}f(\xi)d\xi,\, V(t)g=\frac{1}{\sqrt{2\pi}}\int_{\SR}e^{ix\xi}e^{it\phi(\xi)}\mathscr{F}_{x}g(\xi)d\xi.
\end{eqnarray*}
The spaces $X_{s,b,\varphi}(\R^{2})$ and $X_{s,b,\phi}(\R^{2})$ are defined as follows
\begin{eqnarray*}
&&X_{s,b,\varphi}(\R^{2})=\{u\in\mathcal{S}^{\prime}(\R^{2}):\|u\|_{X_{s,b}}=
\left\|\langle\tau-\varphi(\xi)\rangle^{b}\langle\xi\rangle^{s}\mathscr{F}_{xt}
u(\tau,\xi)\right\|_{L_{\tau\xi}^{2}}<\infty\},\\
&&X_{s,b,\phi}(\R^{2})=\{v\in\mathcal{S}^{\prime}(\R^{2}):\|v\|_{X_{s,b}}=
\left\|\langle\tau-\phi(\xi)\rangle^{b}\langle\xi\rangle^{s}\mathscr{F}_{xt}
v(\tau,\xi)\right\|_{L_{\tau\xi}^{2}}<\infty\}.
\end{eqnarray*}
For $T>0$, we denote by $X_{s,b,\varphi}^{T}$ and $X_{s,b,\phi}^{T}$ the restrictions
of $X_{s,b,\varphi}$ and $X_{s,b,\phi}$ to $\R\times[0,T]$, respectively, equipped with the following norms
\begin{eqnarray*}
&&\|u\|_{X_{s,b,\varphi}^{T}}=\inf\{\|\bar{u}\|_{X_{s,b,\varphi}}, \bar{u}=u, t\in[0,T]\},\\
&&\|u\|_{X_{s,b,\phi}^{T}}=\inf\{\|\bar{v}\|_{X_{s,b,\phi}}, \bar{v}=v, t\in[0,T]\}.
\end{eqnarray*}
Let $(e_{i})_{i\in\mathbf{N}}$ be an orthonormal basis of $L^{2}(\R)$. We denote by $L_{2}^{0,s}$
the space of all Hilbert-Schmidt operators from $L^2(\R)$ to $H^{s}(\R)$, endowed with the norm
\begin{eqnarray*}
&&\|\Phi\|_{L_{2}^{0,s}}=\left(\sum_{i\in\mathbf{N}}\|\Phi e_{i}\|_{H^{s}}^{2}\right)^{\frac{1}{2}}.
\end{eqnarray*}

\subsection{The main results }

\begin{Theorem}\label{Theorem1}(Almost sure nonlinear smoothing for the stochastic mKdV equation)
Let $s\geq\frac{1}{4}$, $f\in H^{s}(\R)$,\,$0<\epsilon\leq\frac{1}{2026}$,\, $b=\frac{1}{2}+\epsilon$ and $0\leq a\leq
\min\left\{2s-\frac{1}{2},1-64\epsilon\right\}$, $\Phi_{1}\in L_{2}^{0,s}$,
and let $u$ be the pathwise solution to the stochastic mKdV equation.
Then,  we have
\begin{eqnarray}
\mathbb{P}\left(\left\{\omega:  \|u_{1}\|_{X_{s+a,b,\varphi}^{T_{\omega}}}<\infty\right\}\right)=1,\label{1.05}
\end{eqnarray}
where
\begin{eqnarray}
&&u_{1}=u-U(t)f-w_{1}=-\int_{0}^{t}U(t-s)\partial_{x}\left[(u_{1}+U(t)f+w_{1})^{3}\right]ds,\label{1.06}\\
&&w_{1}=\int_{0}^{t}U(t-s)\Phi_{1}dW(s).\label{1.07}
\end{eqnarray}
\end{Theorem}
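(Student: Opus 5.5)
The plan is to obtain the smoothing directly from the Duhamel formula \eqref{1.06}, with no new fixed point. Fix $\omega$ in the full-measure set on which the pathwise local solution $u$ exists on $[0,T_\omega]$ with $\|u\|_{X^{T_\omega}_{s,b,\varphi}}<\infty$. On the same set the stochastic convolution satisfies $\|\psi(t) w_1\|_{X_{s,b,\varphi}}<\infty$. This follows from the stochastic convolution estimate of de Bouard et al.\ \cite{BDT1999}, as used in \cite{YHG2021} with $\Phi_1\in L_2^{0,s}$: one takes the expectation of the $X_{s,b}$ norm (more precisely, $X_{s,b'}$ with $b'<1/2$ combined with the time-localized version). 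The free part satisfies $\|\psi(t)U(t)f\|_{X_{s,b,\varphi}}\lesssim\|f\|_{H^s}$. Consequently $u_1=u-U(t)f-w_1\in X^{T_\omega}_{s,b,\varphi}$ almost surely, and the only thing to prove is the gain of $a$ derivatives. By the standard linear estimate
\[
\Big\|\psi(t)\int_0^tU(t-s)F(s)\,ds\Big\|_{X_{s+a,b,\varphi}}\lesssim\|F\|_{X_{s+a,b-1,\varphi}},
\]
it suffices to prove the trilinear smoothing estimate
\[
\|\partial_x(v_1v_2v_3)\|_{X_{s+a,b-1,\varphi}}\lesssim\prod_{j=1}^3\|v_j\|_{X_{s,b,\varphi}},
\]
for $s\ge\frac14$, $0\le a\le\min\{2s-\frac12,1-64\epsilon\}$ and $b=\frac12+\epsilon$. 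One then applies it to $v_j\in\{u_1,\psi U(t)f,\psi w_1\}$ after expanding the cube, using suitable extensions on $[0,T_\omega]$.

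To prove the trilinear estimate I argue by duality. I set $F_j(\xi_j,\tau_j)=\langle\xi_j\rangle^s\langle\sigma_j\rangle^b|\widehat{v_j}|$, where $\sigma_j=\tau_j-\xi_j^3$, and reduce to bounding
\[
\int_{\xi=\sum\xi_j,\ \tau=\sum\tau_j}\frac{|\xi|\langle\xi\rangle^{s+a}\,\bar F\prod F_j}{\langle\sigma\rangle^{1-b}\prod_j\langle\xi_j\rangle^s\langle\sigma_j\rangle^b}.
\]
The argument uses the resonance relation $\sigma-\sigma_1-\sigma_2-\sigma_3=3(\xi_1+\xi_2)(\xi_2+\xi_3)(\xi_3+\xi_1)$, together with the high-low frequency decomposition. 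There are three cases.
\begin{itemize}
\item If $|\xi|\lesssim1$, the weight $\langle\xi\rangle^{a}$ is harmless and the known $X_{s,b}$ trilinear estimate of Kenig--Ponce--Vega type applies.
\item If the resonance function is large, i.e.\ $\max(|\sigma|,|\sigma_j|)\gtrsim|\xi_1+\xi_2||\xi_2+\xi_3||\xi_3+\xi_1|$, I transfer a power of the maximal modulation onto the derivative and frequency weights. Then I close with the Strichartz $L^4_{x,t}$, $L^8_{x,t}$, the local smoothing estimate $\|\partial_x U(t)f\|_{L^\infty_xL^2_t}\lesssim\|f\|_{L^2}$, and the maximal function estimate $L^4_xL^\infty_t$ at level $\frac14$. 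These are transferred to $X_{s,b}$.
\item The bilinear refinement $\|I^{1/2}(v_1,v_2)\|_{L^2}$, with weight $|\xi_1^2-\xi_2^2|^{1/2}$, handles the interactions in which two frequencies are transverse.
\end{itemize}
The constraint $a\le2s-\frac12$ comes from the regime $|\xi_1|\sim|\xi_2|\sim|\xi_3|\sim N$ with $\xi\sim N$. There one gains $N^{1}$ from the bilinear smoothing, or $N^{2}$ from the resonance, against a loss of $N^{1+a-2s}$ up to the $\frac12$ used by the local smoothing. The constraint $a\le1-64\epsilon$ comes from the $\epsilon$-losses incurred by replacing $b$ with $\frac12\pm$ in each of the roughly thirty-two interpolated factors.

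The main obstacle is the near-resonant high-high-high to high interaction. Here $\xi_1+\xi_2\approx0$ (or a permutation), so the resonance factor $(\xi_1+\xi_2)(\xi_2+\xi_3)(\xi_3+\xi_1)$ can be small while the output frequency $|\xi|\sim N$ is large. For this I subdivide dyadically in $|\xi_1+\xi_2|\sim M$. I use the bilinear estimate on the pair $(v_1,v_2)$, whose output frequency is small, together with the local smoothing in $x$ on the third factor $v_3$ at frequency $\sim N$, which absorbs $\partial_x$. Summing over $M$ then requires exactly the margin $a<1-64\epsilon$.

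A second difficulty is that $w_1$ is only in $X_{s,b'}$ with $b'<\frac12$ at the expectation level. I would handle this as in \cite{YHG2021}: either use $b$ slightly below $\frac12$ for the $w_1$ factors in the trilinear estimate, which the $\epsilon$ margins allow, or use the pathwise $X_{s,b}$ bound that is already used in the local well-posedness proof.

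With the trilinear estimate in hand, $\|u_1\|_{X^{T_\omega}_{s+a,b,\varphi}}\lesssim\big(\|u_1\|_{X^{T_\omega}_{s,b,\varphi}}+\|f\|_{H^s}+\|\psi w_1\|_{X_{s,b,\varphi}}\big)^3<\infty$ almost surely, which gives \eqref{1.05}.
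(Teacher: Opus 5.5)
There is a genuine gap in how you handle the stochastic convolution. Your skeleton otherwise matches the paper's: the Duhamel formula for $u_1$, a trilinear estimate gaining $a$ derivatives, and the linear estimate from $X_{s+a,-\frac12+\epsilon,\varphi}$ to $X_{s+a,\frac12+\epsilon,\varphi}$.

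The problem is regularity in time. We have $\|\psi w_1\|_{X_{s,b,\varphi}}=\|\psi(t)U(-t)w_1(t)\|_{H^b_tH^s_x}$, and $U(-t)w_1(t)=\int_0^tU(-s)\Phi_1\,dW(s)$ has the time regularity of Brownian motion. Hence $\psi w_1\notin X_{s,b,\varphi}$ for $b\geq\frac12$, almost surely unless $\Phi_1=0$. This is why Lemma 3.1 and the local theory of de Bouard--Debussche--Tsutsumi and Yan et al.\ are set in $X_{s,b_1}$ with $b_1<\frac12$. Several consequences follow:
\begin{itemize}
\item $u\notin X^{T_\omega}_{s,\frac12+\epsilon,\varphi}$.
\item Your derivation of $u_1\in X^{T_\omega}_{s,b,\varphi}$ from $u,w_1\in X_{s,b,\varphi}$ is invalid. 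The local theory only gives $u_1\in X^{T_\omega}_{s,b_1,\varphi}$.
\item Your trilinear estimate, with all inputs in $X_{s,\frac12+\epsilon,\varphi}$, cannot be applied to the terms of $(u_1+U(t)f+w_1)^3$ that contain $w_1$.
\item Your second fallback, a ``pathwise $X_{s,b}$ bound'' for $w_1$ with $b>\frac12$, does not exist.
\end{itemize}

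Your first fallback, lowering the modulation index on the $w_1$ factors, is the right idea. But it is the whole content of the proof, and your toolkit does not survive it. In the term $w_1^3$, all four functions of the duality pairing have modulation index below $\frac12$; this includes the dual function, which lies in $X_{-(s+a),\frac12-\epsilon,\varphi}$. The local smoothing estimate in $L^\infty_xL^2_t$ and the maximal function estimate in $L^4_xL^\infty_t$ transfer to $X_{s,b}$ only for $b>\frac12$, so neither can be used on any factor.

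The paper instead proves $\|\partial_x(u^3)\|_{X_{s+a,-b_1,\varphi}}\leq C\|u\|^3_{X_{s,b_1,\varphi}}$ with $b_1=\frac12-3\epsilon$ on all factors (Lemmas 4.1 and 4.3). It uses only estimates obtained by interpolating with the trivial $L^2_{xt}$ bound, so their modulation cost is strictly below $b_1$:
\begin{itemize}
\item $\|D_x^{1/8}u\|_{L^4_{xt}}\leq C\|u\|_{X_{0,\frac34b,\varphi}}$;
\item the bilinear estimate of Corollary 2.1 with index $\frac{4-2\delta}{4}b\leq b_1$, i.e.\ $\delta\geq\frac{16\epsilon}{1+2\epsilon}$.
\end{itemize}
No resonance identity is needed. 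Across the four dyadic cases, the condition $a\leq1-64\epsilon$ comes from the factor $N_1^{a-1+4\delta}$ in the high--low case. The condition $a\leq2s-\frac12$ comes from the factor $N_1^{\frac12+a-2s}$ when all four frequencies are comparable and four $L^4$ norms are used. Neither comes from counting interpolated factors.

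The conclusion is then $\|u_1\|_{X^{T_\omega}_{s+a,\frac12+\epsilon,\varphi}}\leq C\|\partial_x[(u_1+U(t)f+w_1)^3]\|_{X^{T_\omega}_{s+a,-b_1,\varphi}}$. This uses the inclusion $X_{s+a,-b_1,\varphi}\subset X_{s+a,-\frac12+\epsilon,\varphi}$, together with $\psi w_1\in X_{s,b_1,\varphi}$ almost surely (Lemma 3.1).
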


\begin{Theorem}\label{Theorem2}(Almost sure uniform convergence for the stochastic mKdV equation)
Assume that $s>\frac{1}{3}$, $f\in H^{s}(\R)$, $\Phi_{1}\in L_{2}^{0,s}$ and let $u_{1}$
be defined as in \eqref{1.06}. Then, we have
\begin{eqnarray}
\mathbb{P}\left(\left\{\omega: \lim_{t\rightarrow0}\|u_{1}\|_{L_{x}^{\infty}}=0\right\}\right)=1.\label{1.08}
\end{eqnarray}

\end{Theorem}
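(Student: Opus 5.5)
The plan is to combine the almost sure nonlinear smoothing of Theorem \ref{Theorem1} with a Sobolev embedding and the continuity in time of $X_{s',b}$ functions. Fix $s>\frac13$. Since $s>\frac13$, we have $2s-\frac12>\frac16$, so we can choose $\epsilon\in(0,\frac{1}{2026}]$ small and then $a$ with
\[
\tfrac12-s<a\leq\min\left\{2s-\tfrac12,1-64\epsilon\right\}.
\]
Such an $a$ exists because $\frac12-s<\frac16<2s-\frac12$ and $\frac12-s<\frac16<1-64\epsilon$. If $s\geq\frac12$ we simply take any admissible $a\geq0$. With this choice $s+a>\frac12$. Theorem \ref{Theorem1} then gives a full-measure set $\Omega_0$ on which $\|u_1\|_{X^{T_\omega}_{s+a,b,\varphi}}<\infty$ with $b=\frac12+\epsilon>\frac12$.

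For $\omega\in\Omega_0$, pick an extension $\bar u_1\in X_{s+a,b,\varphi}$ of $u_1$. Because $b>\frac12$, the standard embedding $X_{s+a,b,\varphi}\hookrightarrow C(\R;H^{s+a}(\R))$ applies: one writes $\bar u_1=U(t)\mathscr{F}^{-1}_{\tau}$-superposition and uses Cauchy--Schwarz in $\tau$. Hence $t\mapsto u_1(t)$ is continuous from $[0,T_\omega]$ into $H^{s+a}(\R)$.

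Next we use the initial value. From \eqref{1.06} we have $u_1(0)=0$, the integral over $[0,0]$ being zero. Since $H^{s+a}(\R)\hookrightarrow L^\infty(\R)$ for $s+a>\frac12$, it follows that
\[
\|u_1(t)\|_{L^\infty_x}\leq C\|u_1(t)-u_1(0)\|_{H^{s+a}}\to0 \quad (t\to0).
\]
This gives \eqref{1.08} on $\Omega_0$.

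The main point to check is that the continuous representative obtained from the extension $\bar u_1$ really coincides with $u_1$ on $[0,T_\omega]$, so that in particular $u_1(0)=0$. This follows because $\bar u_1=u_1$ on $[0,T_\omega]$ as distributions, and $u_1$ is itself continuous in $H^{s}$ by the local well-posedness theory, namely $u,\ U(t)f,\ w_1\in C([0,T_\omega];H^s)$ almost surely. A second, minor point is that the good set $\Omega_0$ must not depend on $t$. This holds because Theorem \ref{Theorem1} is a single almost sure statement for the fixed pair $(a,\epsilon)$.
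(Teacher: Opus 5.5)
Your proof is correct and follows the paper's route: the smoothing bound of Theorem~\ref{Theorem1} places $u_1$ in $X^{T_\omega}_{s+a,\frac12+\epsilon,\varphi}$ with $s+a>\frac12$, this space embeds into $C([0,T_\omega];L^\infty_x)$, and $u_1(0)=0$ then gives the limit, a last step the paper leaves implicit. Your choice $a\in(\frac12-s,\min\{2s-\frac12,1-64\epsilon\}]$ is more careful than the paper's $a=2s-\frac12$, which violates $a\le 1-64\epsilon$ once $s>\frac34-32\epsilon$. One small fix to your aside: at $s=\frac12$ exactly you still need $a>0$, not merely $a\ge 0$.
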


\begin{Theorem}\label{Theorem3}(Almost sure spatial decay for the stochastic mKdV equation)
Assume that $s>\frac{1}{3}$, $f\in H^{s}(\R)$, $\Phi_{1}\in L_{2}^{0,s}$ and let $u_{1}$
be defined as in \eqref{1.06}. Then, we have
\begin{eqnarray}
\mathbb{P}\left(\left\{\omega: \forall t\in[0,T_{\omega}], \lim_{|x|\rightarrow\infty}u_{1}=0\right\}\right)=1.\label{1.09}
\end{eqnarray}

\end{Theorem}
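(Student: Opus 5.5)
The plan is to deduce the spatial decay from the almost sure nonlinear smoothing of Theorem \ref{Theorem1}, in the same way the uniform convergence of Theorem \ref{Theorem2} is obtained. Since $s>\frac{1}{3}$, we can choose $\epsilon\in(0,\frac{1}{2026}]$ small and $a$ with
\[
\frac{1}{2}-s<a\leq\min\left\{2s-\frac{1}{2},1-64\epsilon\right\},
\]
so that $s+a>\frac{1}{2}$. This choice is possible because $2s-\frac{1}{2}>\frac{1}{2}-s$ exactly when $s>\frac{1}{3}$; this is where the threshold $\frac{1}{3}$ enters. By Theorem \ref{Theorem1} there is a full-measure set $\Omega_{0}$ such that $\|u_{1}\|_{X^{T_{\omega}}_{s+a,b,\varphi}}<\infty$ for every $\omega\in\Omega_{0}$. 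We fix $\omega\in\Omega_{0}$ and argue pathwise.

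\textbf{Step 1: time continuity.} Because $b=\frac{1}{2}+\epsilon>\frac{1}{2}$, the standard embedding $X^{T}_{\sigma,b,\varphi}\hookrightarrow C([0,T];H^{\sigma}(\R))$ holds. Applied to an extension $\bar u_{1}$ of $u_{1}$ with $\sigma=s+a$, it gives
\[
u_{1}(\omega)\in C([0,T_{\omega}];H^{s+a}(\R)).
\]
We note that $U(t)$ is unitary on $H^{\sigma}$ and that the Fourier weight $\langle\tau-\xi^{3}\rangle^{b}$ controls $\sup_{t}\|\cdot\|_{H^{\sigma}}$ through $\int\langle\tau-\varphi\rangle^{-2b}d\tau<\infty$.

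\textbf{Step 2: decay at each fixed time.} For each $t\in[0,T_{\omega}]$ we have $u_{1}(t)\in H^{s+a}(\R)$ with $s+a>\frac{1}{2}$, so $\langle\xi\rangle^{-(s+a)}\in L^{2}$. Cauchy--Schwarz then gives
\[
\widehat{u_{1}(t)}\in L^{1}(\R).
\]
The Riemann--Lebesgue lemma shows that $u_{1}(t,x)$ agrees almost everywhere with a continuous function tending to $0$ as $|x|\to\infty$. This holds for every $t$ in the interval, not merely almost every $t$, because of Step 1. Hence the event in \eqref{1.09} contains $\Omega_{0}$ and has probability one. If one prefers to avoid Riemann--Lebesgue, one can instead use the Sobolev embedding $H^{\sigma}\hookrightarrow C_{0}(\R)$ for $\sigma>\frac12$, obtained by density of $C_{c}^{\infty}$ together with $\|g\|_{L^{\infty}}\lesssim\|g\|_{H^{\sigma}}$.

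\textbf{Main obstacle.} The only delicate point is measure-theoretic: the statement quantifies over all $t\in[0,T_{\omega}]$ inside the probability. This is handled by working pathwise on the single null-set complement $\Omega_{0}$ supplied by Theorem \ref{Theorem1}, together with the continuity in $t$ from Step 1. One should also check that $T_{\omega}>0$ is the same stopping time for which Theorem \ref{Theorem1} holds, and that the parameter choice is admissible; both reduce to the inequality $2s-\frac{1}{2}>\frac{1}{2}-s$ above, plus taking $\epsilon$ small enough that $1-64\epsilon>\frac{1}{2}-s$.
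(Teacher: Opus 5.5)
Your proposal is correct and follows the paper's argument. The paper applies Theorem~\ref{Theorem1} with $s+a>\frac{1}{2}$, which is possible precisely because $s>\frac{1}{3}$. It then uses the embedding $X^{T_{\omega}}_{s+a,\frac{1}{2}+\epsilon,\varphi}\hookrightarrow C([0,T_{\omega}];H^{\frac{1}{2}+\epsilon})$, taken from an external lemma, and concludes from decay at infinity of $H^{\sigma}$ functions with $\sigma>\frac{1}{2}$; it leaves this last step implicit, and you make it explicit via $\widehat{u_{1}(t)}\in L^{1}$ and Riemann--Lebesgue.

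Your choice of $a$ inside $\min\{2s-\frac{1}{2},1-64\epsilon\}$ is slightly more careful than the paper's fixed choice $a=2s-\frac{1}{2}$. That choice exceeds the range allowed by Theorem~\ref{Theorem1} once $s>\frac{3}{4}-32\epsilon$.
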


\begin{Theorem}\label{Theorem4}(Almost sure local well-posedness for the stochastic cubic KdV-BO equation)
Assume that $s\geq\frac{1}{4}$, $g\in H^{s}(\R)$, $\Phi_{2}\in L_{2}^{0,s}$ and $b_{1}=\frac{1}{2}-3\epsilon$
with $0<\epsilon\leq\frac{1}{2026}$. Then,
there exists there exists a stopping time $T_{\omega}$, such that
\begin{eqnarray*}
&&\mathbb{P}\left(\{\omega: 0<T_{\omega}<1\}\right)=1,
\end{eqnarray*}
and the equation \eqref{1.03}-\eqref{1.04} admits a unique solution $v$ satisfying
\begin{eqnarray*}
&&\mathbb{P}\left(\left\{\omega: v\in C([0,T_{\omega}], H^{s}(\R))\cap X_{s,b_{1},\phi}^{T_{\omega}}\right\}\right)=1.
\end{eqnarray*}

\end{Theorem}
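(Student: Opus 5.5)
We follow the scheme of de Bouard et al. \cite{BDT1999}, as adapted to the stochastic mKdV equation in \cite{YHG2021}. Write $v=V(t)g+w_{2}+v_{1}$ with $w_{2}=\int_{0}^{t}V(t-s)\Phi_{2}dW(s)$. The remainder $v_{1}$ is then found as a fixed point of the truncated map
\begin{eqnarray*}
\Gamma_{R}v_{1}=-\psi(t/T)\int_{0}^{t}V(t-s)\partial_{x}\left[\theta_{R}\left(\|v\|_{X_{s,b_{1},\phi}^{t}}\right)v^{3}\right]ds,
\end{eqnarray*}
where $\theta_{R}$ is a smooth cutoff equal to $1$ on $[0,R]$ and vanishing on $[2R,\infty)$. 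We work in $X_{s,b_{1},\phi}$ with $b_{1}=\frac{1}{2}-3\epsilon<\frac12$. This choice is forced: the stochastic convolution belongs to $X_{s,b,\phi}$ only for $b<\frac12$.

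The argument has four ingredients, taken in order.

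\emph{(i) Linear estimates.} We need
\begin{eqnarray*}
\|\psi(t)V(t)g\|_{X_{s,b_{1},\phi}}\lesssim\|g\|_{H^{s}}
\end{eqnarray*}
and the Duhamel estimate
\begin{eqnarray*}
\left\|\psi(t/T)\int_{0}^{t}V(t-s)F\,ds\right\|_{X_{s,b_{1},\phi}}\lesssim T^{1-b_{1}-b'}\|F\|_{X_{s,-b',\phi}},
\end{eqnarray*}
with $b'=\frac12-2\epsilon$, so that the exponent $1-b_{1}-b'$ is positive.

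\emph{(ii) Stochastic convolution.} We show $\mathbb{E}\|\psi w_{2}\|_{X_{s,b_{1},\phi}}^{2}\lesssim\|\Phi_{2}\|_{L_{2}^{0,s}}^{2}$ and $w_{2}\in C([0,T],H^{s})$ almost surely. Expanding $w_{2}=\sum_{i}\int_{0}^{t}V(t-s)\Phi_{2}e_{i}\,d\beta_{i}(s)$, the Itô isometry reduces the first bound to the time-frequency computation of \cite{BDT1999}. That computation uses only that $\phi$ is real, so the dispersion relation plays no role. Continuity in $H^{s}$ follows from the factorization method or from Burkholder's inequality.

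\emph{(iii) Trilinear estimate.} This is the core of the proof:
\begin{eqnarray*}
\|\partial_{x}(v_{1}v_{2}v_{3})\|_{X_{s,-b',\phi}}\lesssim\prod_{j=1}^{3}\|v_{j}\|_{X_{s,b_{1},\phi}},\qquad s\geq\tfrac14 .
\end{eqnarray*}
I would prove it by duality and a dyadic decomposition in the output frequency.

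For $|\xi|\leq 2A$ the symbol $\phi$ is not dispersive enough to help, but this causes no difficulty. The factor $\partial_{x}$ is bounded there, and the estimate follows from $L^{4}_{xt}$/$L^{6}_{xt}$ Strichartz-type bounds and Hölder's inequality.

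For $|\xi|\geq 2A$ we have $|\phi'(\xi)|=|3\xi^{2}-2\nu|\xi||\geq\frac34\xi^{2}$ and $|\phi''(\xi)|\sim|\xi|$. Hence $\phi$ behaves like $\xi^{3}$, and the local smoothing estimate, the maximal function estimate and the bilinear smoothing estimates of Kenig--Ponce--Vega, and of Guo--Huo \cite{GH2004}, transfer to $X_{s,b,\phi}$. These are first proved for free solutions and then extended to $X^{s,b}$ functions by the transference principle. From there the Guo--Huo proof of the $H^{1/4}$ trilinear estimate can be repeated.

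The main obstacle is that $b_{1}<\frac12$, whereas \cite{GH2004} works with $b>\frac12$. Transference from free solutions loses integrability when $b<\frac12$. I would handle this by interpolating each needed bound between the $b>\frac12$ version and the trivial $L^{2}_{xt}$ estimate at $b=0$. Since $\epsilon\leq\frac1{2026}$ is small, the loss of a power $|\xi|^{C\epsilon}$ coming from the resonance function in the high-high-high interaction is absorbed. It is absorbed either by the spare regularity gained on the nonresonant region or by taking $b'$ slightly smaller; the margin in $\epsilon$ in the statement is exactly what this bookkeeping consumes.

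\emph{(iv) Fixed point and stopping time.} From (i)--(iii), $\Gamma_{R}$ is a contraction on a ball of $X_{s,b_{1},\phi}^{T}$ for a deterministic $T$ depending on $R$ and $\epsilon$. This gives a global-in-time solution $v_{R}$ of the truncated problem. We then define
\begin{eqnarray*}
T_{\omega}=\inf\left\{t:\|v_{R}\|_{X_{s,b_{1},\phi}^{t}}\geq R\right\}\wedge\tfrac12 .
\end{eqnarray*}
This is a stopping time, because the norm restricted to $[0,t]$ is adapted. Moreover $T_{\omega}>0$ almost surely, since $\|V(t)g+w_{2}\|_{X_{s,b_{1},\phi}^{t}}<\infty$ almost surely and $R$ can be taken large. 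On $[0,T_{\omega}]$ the cutoff equals $1$, so $v_{R}$ solves \eqref{1.03}--\eqref{1.04}.

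Uniqueness follows from the contraction property, by comparing two solutions on the smaller of their existence times. The continuity $v\in C([0,T_{\omega}],H^{s})$ follows because $X_{s,b_{1}}$ with $b_{1}<\frac12$ does not embed in $C_{t}H^{s}$, so it must be checked separately for each piece. The free part is continuous trivially, and the stochastic part by (ii). For the Duhamel term, we use that the trilinear estimate in fact places the nonlinearity in $X_{s,-\frac12+\delta}$, and then apply the standard embedding for the Duhamel operator.
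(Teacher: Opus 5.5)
\textbf{There is a genuine gap in step (iv).} Your ingredients (i)--(iii) are essentially the paper's. The paper proves the trilinear bound with $-b_1$ on the output (Lemma 4.4 with $a=0$), so (2.021) gives the factor $T^{1-2b_1}=T^{6\epsilon}$. Its Lemma 4.2 uses only the bilinear smoothing of Corollary 2.2 and the interpolated bound $\|D_x^{1/8}v\|_{L^4_{xt}}\le C\|v\|_{X_{0,\frac34 b,\phi}}$, with no resonance-function analysis.

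The problem is your claim that $\Gamma_R$ is a contraction ``from (i)--(iii)''. It does not follow, because the cutoff $\theta_R(\|v\|_{X^{t}_{s,b_1,\phi}})$ is a function of $t$ that depends on the unknown. Multiplication by a function $h(t)$ is not controlled on $X_{s,-b',\phi}$ by $\sup|h|$, since the multiplier norm is that of $h$ on $H^{b'}_t$.

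\begin{itemize}
\item \emph{Boundedness is fine.} A single monotone $[0,1]$-valued cutoff is an average over $\lambda$ of sharp cutoffs $\chi_{[0,\tau_\lambda)}$, and these are uniformly bounded multipliers for $b'<\frac12$.
\item \emph{The Lipschitz term fails.} Consider $\big(\theta_R(\|v\|_{X^t})-\theta_R(\|\tilde v\|_{X^t})\big)\partial_x(\tilde v^3)$. The prefactor is a difference of two monotone functions with sup $O(R^{-1}\|v-\tilde v\|_{X^T})$. Its multiplier norm on $H^{b'}_t$ is bounded only by its sup plus its total variation, and the total variation need not be small: a small-amplitude sawtooth with many teeth has a large multiplier norm.
\item \emph{The usual trick does not transfer.} What makes such truncations work in $L^r_tL^p_x$ is additivity of the norm in time, and $X^{s,b}$ norms lack it. Interpolating between $b=0$ and $b<\frac12$ gives only H\"older dependence on $v-\tilde v$, which is not a contraction.
\end{itemize}

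Two further points in (iv) are unjustified. First, you assert that $t\mapsto\|v_R\|_{X^t_{s,b_1,\phi}}$ is continuous and adapted. For the solution of a truncated nonlinear problem this is a real measurability statement that needs proof. Second, $T_\omega>0$ a.s.\ for fixed $R$ does not follow from finiteness of the norm plus ``$R$ large''; that only gives $\mathbb{P}(T_\omega>0)\to1$ as $R\to\infty$. What you need is that the restricted norm tends to $0$ as $t\to0^+$, which holds because $b_1<\frac12$.

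\textbf{How the paper avoids this, and how to fix your argument.} The paper never truncates the unknown. It runs the contraction pathwise on the unit ball of $X^T_{s,b_1,\phi}$ and uses Lemma 2.6 to replace $\|w_2\|_{X^T_{s,b_1,\phi}}$ by $\|\chi_{[0,T]}w_2\|_{X_{s,b_1,\phi}}$. It then sets $T_\omega=\inf\{0<T<1:\ CT^{6\epsilon}E_\omega^3\geq\frac14\}$ with $E_\omega=2+\|g\|_{H^s}+\|\chi_{[0,T]}w_2\|_{X_{s,b_1,\phi}}$, a quantity built from the data alone. The stopping-time property and positivity then need only three facts about $T\mapsto\|\chi_{[0,T]}w_2\|_{X_{s,b_1,\phi}}$: its continuity and its vanishing as $T\to0^+$ (Lemma 2.9, where $b_1<\frac12$ enters), and its $\mathcal{F}_T$-measurability (Lemma 3.9).

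If you want to keep a cutoff, put it on $\|\chi_{[0,t]}w_2\|_{X_{s,b_1,\phi}}$ rather than on $\|v\|_{X^t}$. Equivalently, stop at $\inf\{t:\|\chi_{[0,t]}w_2\|_{X_{s,b_1,\phi}}\geq R\}$. The cutoff is then a fixed monotone function of $t$, independent of $v_1$. Your (i)--(iii), together with the uniform boundedness of sharp time cutoffs on $X_{s,-b',\phi}$, then do give a contraction.

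Your treatment of $H^s$-continuity matches the paper's (8.013)--(8.014) and Lemma 3.8: the Duhamel term via an $X_{s,-\frac12+\delta}$ bound on the nonlinearity, and $w_2$ separately.
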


\begin{Theorem}\label{Theorem5}(Almost sure nonlinear smoothing for the stochastic cubic KdV-BO equation)
Let $s\geq\frac{1}{4}$,\,$g\in H^{s}(\R)$, $0<\epsilon\leq\frac{1}{2026}$,\, $b=\frac{1}{2}+\epsilon$ and $0\leq a\leq\min\left\{2s-\frac{1}{2},1-48\epsilon\right\}$, $\Phi_{2}\in L_{2}^{0,s}$, and let $v$ be the pathwise solution to the stochastic cubic KdV-BO equation.
Then,  we have
\begin{eqnarray}
\mathbb{P}\left(\left\{\omega: \|v_{1}\|_{X_{s+a,b,\phi}^{T_{\omega}}}<\infty\right\}\right)=1,\label{1.010}
\end{eqnarray}
where
\begin{eqnarray}
&&v_{1}=v-V(t)g-w_{2}=-\int_{0}^{t}V(t-s)\partial_{x}\left[(v_{1}+V(t)g+w_{2})^{3}\right]ds,\label{1.011}\\
&&w_{2}=\int_{0}^{t}V(t-s)\Phi_{2}dW(s).\label{1.012}
\end{eqnarray}
\end{Theorem}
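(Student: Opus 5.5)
We will follow the scheme used for Theorem \ref{Theorem1}, adapted to the phase $\phi(\xi)=-\nu\xi|\xi|+\xi^{3}$. First, we fix $\omega$ in the full-measure set on which Theorem \ref{Theorem4} provides the solution $v\in C([0,T_{\omega}],H^{s})\cap X_{s,b_{1},\phi}^{T_{\omega}}$. On the same set we require the stochastic convolution to satisfy $w_{2}\in X_{s,b_{1},\phi}^{T_{\omega}}$. This holds almost surely by the de Bouard--Debussche--Tsutsumi estimate $\mathbb{E}\|\psi w_{2}\|_{X_{s,b_{1},\phi}}^{2}\lesssim\|\Phi_{2}\|_{L_{2}^{0,s}}^{2}$ with $b_{1}<\frac12$, which was already used for Theorem \ref{Theorem4}. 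For the linear part, $\|\psi V(t)g\|_{X_{s,b,\phi}}\lesssim\|g\|_{H^{s}}$. Hence $v_{1}=v-V(t)g-w_{2}\in X_{s,b_{1},\phi}^{T_{\omega}}$, and everything reduces to a deterministic estimate. Since $v_{1}$ solves the Duhamel formula \eqref{1.011}, the standard linear estimate in Bourgain spaces gives
\begin{eqnarray*}
\|v_{1}\|_{X_{s+a,b,\phi}^{T_{\omega}}}\lesssim\|\partial_{x}(z^{3})\|_{X_{s+a,b-1,\phi}^{T_{\omega}}},\qquad z=v_{1}+V(t)g+w_{2}=v.
\end{eqnarray*}
It therefore suffices to prove the trilinear smoothing estimate
\begin{eqnarray*}
\|\partial_{x}(z_{1}z_{2}z_{3})\|_{X_{s+a,-\frac12+\epsilon,\phi}}\lesssim\prod_{j=1}^{3}\|z_{j}\|_{X_{s,b_{1},\phi}},
\end{eqnarray*}
for $s\geq\frac14$ and $0\leq a\leq\min\{2s-\frac12,1-48\epsilon\}$. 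Note that the inputs lie only at regularity $b_{1}=\frac12-3\epsilon<\frac12$, because $w_{2}$ is no better. The right-hand side is finite, which yields \eqref{1.010}.

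To prove the trilinear estimate, we dualize and write it as a weighted integral over $\xi=\xi_{1}+\xi_{2}+\xi_{3}$, $\tau=\tau_{1}+\tau_{2}+\tau_{3}$. The multiplier is
\begin{eqnarray*}
\frac{|\xi|\langle\xi\rangle^{s+a}}{\langle\sigma\rangle^{\frac12-\epsilon}\prod_{j}\langle\xi_{j}\rangle^{s}\langle\sigma_{j}\rangle^{b_{1}}},\qquad \sigma=\tau-\phi(\xi),\quad \sigma_{j}=\tau_{j}-\phi(\xi_{j}).
\end{eqnarray*}
We split $P_{\leq 2A}$ and $P_{\geq 2A}$ on every frequency. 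When $|\xi_{j}|\geq 2A$ we have $|\phi'(\xi_{j})|=|3\xi_{j}^{2}-2\nu|\xi_{j}||\gtrsim\xi_{j}^{2}$, so the smoothing and bilinear Strichartz estimates for $\phi$ behave as for $\varphi$. These are the Kato smoothing $\|D_{x}V(t)g\|_{L_{x}^{\infty}L_{t}^{2}}$, the maximal function $L_{x}^{4}L_{t}^{\infty}$ estimate, and the bilinear estimate gaining $|\xi_{1}^{2}-\xi_{2}^{2}|^{1/2}$, all transferred to $X_{s,b}$ for $b>\frac12$ and interpolated down to $b_{1}$ at an $O(\epsilon)$ loss. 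Low frequencies $|\xi_{j}|\leq 2A$ are handled trivially by Young/Hölder. On high frequencies we use the resonance identity
\begin{eqnarray*}
\sigma-\sigma_{1}-\sigma_{2}-\sigma_{3}=-3(\xi_{1}+\xi_{2})(\xi_{2}+\xi_{3})(\xi_{1}+\xi_{3})+\nu R,
\end{eqnarray*}
where the Benjamin--Ono correction $R$ is of lower order ($|R|\lesssim$ quadratic in the frequencies). So $\max\{|\sigma|,|\sigma_{j}|\}\gtrsim|(\xi_{1}+\xi_{2})(\xi_{2}+\xi_{3})(\xi_{1}+\xi_{3})|$ whenever this product dominates the quadratic error.

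The case analysis runs in the order of Correia--Silva. First, the nonresonant case: all pairwise sums are large. The full modulation gain is available, and $a\leq1-48\epsilon$ is exactly what this permits after the $\epsilon$-losses from $b_{1}<\frac12$ and $\frac12-\epsilon$. Second, the high$\times$high$\times$high$\to$high case with one pair sum small. Here we use the bilinear gain $|\xi_{i}^{2}-\xi_{j}^{2}|^{1/2}$ on the nonresonant pair, and the constraint $a\leq2s-\frac12$ arises because two factors $\langle\xi_{j}\rangle^{-s}$ must pay for $\langle\xi\rangle^{a+\frac12}$. Third, the cases where one frequency is much larger, handled by Kato smoothing plus the maximal estimate.

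\textbf{Main obstacle.} We expect the fully resonant region to be the hardest: $|\xi_{1}|\sim|\xi_{2}|\sim|\xi_{3}|\sim|\xi|$ with a pair sum of size $O(1)$, or $\xi_{1}+\xi_{2}\approx0$ with $\xi\approx\xi_{3}$. There the term $\nu R$ can compete with the cubic resonance. In this region we would first try a further dyadic localization of the small pair sum $|\xi_{1}+\xi_{2}|\sim M$, and use the bilinear estimate for the nonresonant pair combined with an $L^{4}_{xt}$ Strichartz estimate for the remaining factor. We would then sum in $M$ using the spare $\epsilon$, which is where the factor $48\epsilon$ (rather than $64\epsilon$ for mKdV) should come out of the bookkeeping.
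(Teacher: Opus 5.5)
Your reduction is the same as the paper's. On the full-measure set where $v$ exists and $\psi w_{2}\in X_{s,b_{1},\phi}$ (Lemma 3.2), the Duhamel estimate transfers $\|v_{1}\|_{X^{T_{\omega}}_{s+a,b,\phi}}$ onto a trilinear estimate with all inputs at modulation $b_{1}=\frac12-3\epsilon$. The paper proves the slightly stronger version with $-b_{1}$ in place of $-\frac12+\epsilon$ on the output (Lemmas 4.2 and 4.4).

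The gap is that this trilinear estimate, which is the entire analytic content of the theorem, is never proved. You outline a Correia--Silva case analysis and leave the comparable-frequency region $|\xi_{1}|\sim|\xi_{2}|\sim|\xi_{3}|\sim|\xi|$ as something you ``would first try''. The mechanism you propose there also has no room to work:
\begin{itemize}
\item Kato smoothing and the $L^{4}_{x}L^{\infty}_{t}$ maximal estimate hold only for $b>\frac12$. Interpolating them down to $b_{1}$ destroys the $L^{\infty}_{t}$ structure and costs $N^{O(\epsilon)}$.
\item Splitting the small pair sum dyadically in $M$ and summing ``with the spare $\epsilon$'' needs a power of $N$ to spare.
\item The theorem allows the endpoint $a=2s-\frac12$. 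There the symbol $|\xi|\langle\xi\rangle^{s+a}\prod_{j}\langle\xi_{j}\rangle^{-s}\sim N^{1+a-2s}=N^{1/2}$ must be paid for by exactly $N^{-1/2}$, so this regime is balanced and any $\epsilon$-loss is fatal.
\end{itemize}

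The missing idea is that no resonance analysis is needed. The paper splits into four dyadic cases and uses only two tools.
\begin{itemize}
\item In the case $N_{1}\sim N_{2}\sim N_{3}\sim N$, it puts all four functions (three inputs and the dual function) in $L^{4}_{xt}$ via \eqref{2.010}, $\|D_{x}^{1/8}v\|_{L^{4}_{xt}}\le C\|v\|_{X_{0,\frac34 b,\phi}}$. This holds at modulation $\frac34 b<b_{1}$, so it is loss-free, and it gives exactly $N^{\frac12+a-2s}\le 1$.
\item In the remaining cases ($N_{1}\gg N_{2}\ge N_{3}$; $N_{1}\sim N_{2}\gg N_{3}$; $N_{1}\sim N_{2}\sim N_{3}\gg N$), it uses Corollary 2.2 with $s=\frac12-\delta$. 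This comes from interpolating the $|\phi'(\xi_{1})-\phi'(\xi_{2})|^{1/2}$ bilinear estimate with $L^{4}\times L^{4}$. It lives at modulation $\frac{3-2\delta}{3}b\le b_{1}$ once $\delta\ge\frac{12\epsilon}{1+2\epsilon}$, at a cost of $N_{1}^{2\delta}$ per use.
\end{itemize}
The restriction $a\le 1-48\epsilon$ comes from the high--low--low case, where two such uses give $N_{1}^{a-1+4\delta}$; it has nothing to do with a resonant region. For mKdV, the weaker interpolation endpoint $\frac34 b$ in Lemma 2.2 forces $\delta\ge\frac{16\epsilon}{1+2\epsilon}$, hence $64\epsilon$. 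With these two ingredients the estimate closes directly. You never need the identity for $\sigma-\sum_{j}\sigma_{j}$ or any control of $\nu R$.
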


\begin{Theorem}\label{Theorem6}(Almost sure uniform convergence for the stochastic cubic KdV-BO equation)
Let $s>\frac{1}{3}$, $g\in H^{s}(\R)$, $\Phi_{2}\in L_{2}^{0,s}$ and let $v_{1}$
be defined as in \eqref{1.011}. Then, we have
\begin{eqnarray}
\mathbb{P}\left(\left\{\omega: \lim_{t\rightarrow0}\|v_{1}\|_{L_{x}^{\infty}}=0\right\}\right)=1.\label{1.013}
\end{eqnarray}

\end{Theorem}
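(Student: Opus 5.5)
The plan is to deduce Theorem \ref{Theorem6} from the almost sure nonlinear smoothing of Theorem \ref{Theorem5}, in the same way Theorem \ref{Theorem2} follows from Theorem \ref{Theorem1}. Since $s>\frac13$, we have $2s-\frac12>\frac16$. For $\epsilon$ small, $1-48\epsilon$ is close to $1$. Hence there is an admissible $a$ with $s+a>\frac12$, namely $a=\min\{2s-\frac12,1-48\epsilon\}$, because $s+2s-\frac12>\frac12$ holds exactly when $s>\frac13$. Theorem \ref{Theorem5} then gives, for almost every $\omega$, $v_1\in X^{T_\omega}_{s+a,b,\phi}$ with $b=\frac12+\epsilon>\frac12$. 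We fix such an $\omega$ and choose an extension $\bar v_1\in X_{s+a,b,\phi}$ that agrees with $v_1$ on $[0,T_\omega]$.

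\textbf{Step 1: continuity in $H^{s+a}$.} Because $b>\frac12$, we have the standard embedding $X_{\sigma,b,\phi}\hookrightarrow C(\R;H^{\sigma}(\R))$. It is proved by writing $\bar v_1(t)=V(t)\int e^{it\lambda}F_\lambda\,d\lambda$, where $F_\lambda$ is the profile with $\mathscr F_xF_\lambda(\xi)=\mathscr F_{xt}\bar v_1(\xi,\lambda+\phi(\xi))$, applying Cauchy--Schwarz in $\lambda$ with $\int\langle\lambda\rangle^{-2b}d\lambda<\infty$, and then using dominated convergence for continuity. Thus $t\mapsto v_1(t)$ is continuous from $[0,T_\omega]$ into $H^{s+a}(\R)$.

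\textbf{Step 2: identify $v_1(0)=0$.} From the Duhamel formula \eqref{1.011}, $v_1(0)=0$. We also have $v(0)=g$ and $w_2(0)=0$. Continuity then gives $\|v_1(t)\|_{H^{s+a}}\to0$ as $t\to0$.

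\textbf{Step 3: Sobolev embedding.} Since $s+a>\frac12$, $\|h\|_{L^\infty_x}\le\|\widehat h\|_{L^1}\le C\|\langle\xi\rangle^{-(s+a)}\|_{L^2}\|h\|_{H^{s+a}}$. Therefore $\|v_1(t)\|_{L^\infty_x}\le C\|v_1(t)\|_{H^{s+a}}\to0$. This holds on the full-measure set given by Theorem \ref{Theorem5}, which is exactly \eqref{1.013}.

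\textbf{Main obstacle.} The delicate point is Step 1, specifically making sure the restriction-space norm controls $\sup_t\|v_1(t)\|_{H^{s+a}}$ and continuity at $t=0$ for the restricted object. This is handled because the extension $\bar v_1$ coincides with $v_1$ on $[0,T_\omega]$. One must also check that the admissible range of $a$ in Theorem \ref{Theorem5} actually reaches $s+a>\frac12$. This holds since $\epsilon\le\frac1{2026}$ makes $1-48\epsilon>\frac12\ge\frac12-s$ hold trivially whenever $a=1-48\epsilon$ is the minimum. The restriction $s>\frac13$ is needed precisely when $a=2s-\frac12$ is the minimum.
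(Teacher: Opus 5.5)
Your proposal is correct and follows the paper's route: the paper combines the smoothing bound (9.01) with the embedding $X^{T}_{s+a,\frac12+\epsilon,\phi}\hookrightarrow C([0,T];L^\infty_x)$ for $s+a>\frac12$ (quoted from \cite{YYY2026}) and $v_1(0)=0$, and your Steps 1 and 3 simply unpack that embedding. Your choice $a=\min\{2s-\frac12,1-48\epsilon\}$ is slightly more careful than the paper's fixed $a=2s-\frac12$, which Theorem \ref{Theorem5} only permits when $s\le\frac34-24\epsilon$.
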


\begin{Theorem}\label{Theorem7}(Almost sure spatial decay for the stochastic cubic KdV-BO equation)
Let $s>\frac{1}{3}$, $g\in H^{s}(\R)$, $\Phi_{2}\in L_{2}^{0,s}$ and let $v_{1}$
be defined as in \eqref{1.011}. Then, we have
\begin{eqnarray}
\mathbb{P}\left(\left\{\omega: \forall t\in[0,T_{\omega}],\lim_{|x|\rightarrow\infty} v_{1}=0\right\}\right)=1.\label{1.014}
\end{eqnarray}

\end{Theorem}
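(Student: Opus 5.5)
The plan is to derive Theorem \ref{Theorem7} from the almost sure nonlinear smoothing in Theorem \ref{Theorem5}, together with the embedding of $X^{T}_{\sigma,b,\phi}$ into $C([0,T];H^{\sigma}(\R))$ for $b>\frac12$, and the Riemann--Lebesgue lemma.

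\textbf{Step 1: fix a good event.} Since $s>\frac13$, we have $2s-\frac12>\frac16$. Fix $\epsilon\in(0,\frac{1}{2026}]$ small and choose
\[
a=\min\left\{2s-\tfrac12,\,1-48\epsilon\right\}>\tfrac16 .
\]
Then $\sigma:=s+a>\frac13+\frac16=\frac12$. By Theorem \ref{Theorem5} there is a full-measure set $\Omega_0$ such that, for every $\omega\in\Omega_0$, the local solution exists on $[0,T_\omega]$ (as in Theorem \ref{Theorem4}) and $\|v_1\|_{X^{T_\omega}_{\sigma,b,\phi}}<\infty$ with $b=\frac12+\epsilon$.

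\textbf{Step 2: pass to continuity in time with values in $H^\sigma$.} Fix $\omega\in\Omega_0$ and pick an extension $\bar v_1\in X_{\sigma,b,\phi}$ of $v_1$ with finite norm. Since $b>\frac12$, the standard embedding gives
\[
\sup_{t}\|\bar v_1(t)\|_{H^\sigma}\lesssim\|\bar v_1\|_{X_{\sigma,b,\phi}}.
\]
To see this, write $\bar v_1(t)=V(t)\mathscr{F}^{-1}_{\tau}$ of the twisted profile and apply Cauchy--Schwarz in $\tau$, using $\int\langle\tau\rangle^{-2b}d\tau<\infty$. Consequently, for each fixed $t\in[0,T_\omega]$ we have $v_1(t)\in H^{\sigma}(\R)$.

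\textbf{Step 3: Riemann--Lebesgue.} Because $\sigma>\frac12$, Cauchy--Schwarz gives
\[
\|\mathscr{F}_x v_1(t)\|_{L^1_\xi}\le \|\langle\xi\rangle^{-\sigma}\|_{L^2}\,\|v_1(t)\|_{H^\sigma}<\infty,
\]
so $\mathscr{F}_x v_1(t)\in L^1(\R)$. Therefore
\[
v_1(t,x)=\frac{1}{\sqrt{2\pi}}\int_{\R}e^{ix\xi}\mathscr{F}_xv_1(t)(\xi)\,d\xi
\]
is a continuous function of $x$. By the Riemann--Lebesgue lemma, $v_1(t,x)\to0$ as $|x|\to\infty$. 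This holds for every $t\in[0,T_\omega]$ simultaneously, because the bound in Step 2 is uniform in $t$. Hence the event in \eqref{1.014} contains $\Omega_0$, which has probability one.

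\textbf{Main obstacle.} The delicate point is the identification in Step 2: the restriction-norm quantity $\|v_1\|_{X^{T_\omega}_{\sigma,b,\phi}}$ must control $v_1(t)$ pointwise in $t$. The problem is that $T_\omega$ is random and $v_1$ is only defined through an extension. I would handle this pathwise. Fixing $\omega$, the norm is an infimum over extensions, so we may select one extension with finite norm; it agrees with $v_1$ on $[0,T_\omega]$ by definition, and the deterministic embedding then applies.

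A second point is measurability of the event in \eqref{1.014}. This is not an issue: it suffices to exhibit the full-measure subset $\Omega_0$, so completeness of the probability space gives the result.
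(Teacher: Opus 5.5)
Your proposal is correct and follows the same route as the paper: the smoothing bound from Theorem~\ref{Theorem5}, the embedding $X^{T_\omega}_{s+a,\frac12+\epsilon,\phi}\hookrightarrow C([0,T_\omega];H^{s+a})$ with $s+a>\frac12$, and then decay at infinity of $H^{\sigma}$ functions for $\sigma>\frac12$, which you make explicit via Riemann--Lebesgue while the paper leaves it implicit. Your choice $a=\min\{2s-\frac12,\,1-48\epsilon\}$ is also slightly more careful than the paper's $a=2s-\frac12$, since the latter is admissible in Theorem~\ref{Theorem5} only when $2s-\frac12\le 1-48\epsilon$.
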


The remainder of this paper is organized as follows. In Section 2,  we give
some lemmas related to $X_{s,b}$. In Section 3,
 we give some lemmas related to the stochastic convolution, which play a key role in proving that $T_{\omega}$ is a stopping time.
 In Section 4,
 we prove the trilinear estimates, which are key to proving Theorems 1.1-1.7. In Section 5, we prove
 Theorem 1.1. In Section 6, we prove
 Theorem 1.2. In Section 7, we prove
 Theorem 1.3.  In Section 8, we prove
 Theorem 1.4. In Section 9, we prove
 Theorem 1.5.  In Section 10, we prove
 Theorem 1.6. In Section 11, we prove
 Theorem 1.7.

\bigskip
\bigskip

\section{Estimates Related to $X_{s,b}$}

\setcounter{equation}{0}

\setcounter{Theorem}{0}

\setcounter{Lemma}{0}

\setcounter{Proposition}{0}

\setcounter{Corollary}{0}

\setcounter{section}{2}
In this section, we present some lemmas related to $X_{s,b}$, which will be used in the subsequent proofs.

\begin{Lemma}\label{lem2.1}
Let $b=\frac{1}{2}+\epsilon$, $0<\epsilon\leq\frac{1}{2026}$, we have
\begin{eqnarray}
&&\left\|D_{x}^{\frac{1}{6}}u\right\|_{L_{xt}^{6}}\leq C\|u\|_{X_{0,b,\varphi}},\label{2.01}\\
&&\left\|D_{x}^{\frac{1}{8}}u\right\|_{L_{xt}^{4}}\leq C\|u\|_{X_{0,\frac{3}{4}b,\varphi}},\label{2.02}\\
&&\left\|u\right\|_{L_{xt}^{8}}\leq C\|u\|_{X_{0,b,\varphi}},\label{2.03}\\
&&\left\|u\right\|_{L_{xt}^{4}}\leq C\|u\|_{X_{0,\frac{2}{3}b,\varphi}}.\label{2.04}
\end{eqnarray}

\end{Lemma}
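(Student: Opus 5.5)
These four estimates are linear Strichartz-type bounds for the Airy group $U(t)$, transferred to $X_{s,b}$ spaces. The plan is to first prove each bound for free solutions $U(t)u_0$, then transfer to $X_{0,b,\varphi}$ with $b>\frac12$, and finally interpolate with the trivial $L^2$ identity to reach the spaces with $b<\frac12$.

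\textbf{Free estimates.} For the Airy group I use the Kenig--Ponce--Vega Strichartz estimate
\[
\|D_x^{\theta\alpha/2}U(t)u_0\|_{L^q_tL^p_x}\le C\|u_0\|_{L^2},\qquad (q,p)=\Big(\tfrac{6}{\theta(\alpha+1)},\tfrac{2}{1-\theta}\Big),\quad 0\le\theta\le1,\ 0\le\alpha\le\tfrac12 .
\]
\begin{itemize}
\item Taking $\theta=\frac23$, $\alpha=\frac12$ gives $q=p=6$ with derivative gain $\frac16$. This yields \eqref{2.01} at the free level.
\item For \eqref{2.03}, I use the Kato smoothing $\|\partial_xU(t)u_0\|_{L^\infty_xL^2_t}\le C\|u_0\|_{L^2}$ together with the maximal function estimate $\|U(t)u_0\|_{L^4_xL^\infty_t}\le C\|D_x^{1/4}u_0\|_{L^2}$. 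Interpolation between Strichartz, smoothing and maximal estimates then gives $\|U(t)u_0\|_{L^8_{xt}}\le C\|u_0\|_{L^2}$.
\item Alternatively, \eqref{2.03} follows from the Strichartz family directly: $\theta=\frac34$ gives $p=8$ and $q=\frac{8}{\alpha+1}$, and $\alpha=0$ gives $q=8$ with no derivative. This is scale-consistent, since the scaling of $L^8_{xt}$ for Airy is $\frac12-\frac1{8}-\frac38=0$. So $\|U(t)u_0\|_{L^8_{xt}}\lesssim\|u_0\|_{L^2}$.
\end{itemize}

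\textbf{Transference.} The standard transference principle handles the passage to $X_{0,b,\varphi}$. Write
\[
u(x,t)=\int e^{it\lambda}\,U(t)u_\lambda\,d\lambda,\qquad \widehat{u_\lambda}(\xi)=\mathscr{F}_{xt}u(\xi,\lambda+\varphi(\xi)).
\]
Minkowski gives the bound $\int\|u_\lambda\|_{L^2}\,d\lambda$. Cauchy--Schwarz in $\lambda$ with weight $\langle\lambda\rangle^{b}$, using $b>\frac12$, then gives \eqref{2.01} and \eqref{2.03}. The same argument applied to the $L^4$ Strichartz estimates gives $\|D_x^{1/4}u\|_{L^4_{xt}}\lesssim\|u\|_{X_{0,b,\varphi}}$ (take $\theta=\frac12$, $\alpha=\frac12$: $q=8$, $p=4$, gain $\frac18$) and $\|u\|_{L^4_{xt}}\lesssim\|u\|_{X_{0,b,\varphi}}$ (take $\theta=\frac12$, $\alpha=0$: $q=12$, $p=4$, then use Hölder in time locally). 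This local-in-time step needs care, since global-in-time $L^4_tL^4_x$ without derivatives is not scale-invariant. In the 1D Airy case I instead rely on the $L^4_{xt}$ bilinear/Strichartz bound with $D_x^{1/6}$-type gain, i.e.\ $\|D_x^{1/6}u\|_{L^6}$ and $\|D_x^{1/8}u\|_{L^4}$ at $b>\frac12$.

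\textbf{Interpolation.} To lower $b$, I interpolate (complex interpolation in the weights $\langle\tau-\varphi\rangle^b\langle\xi\rangle^{\sigma}$) against the Plancherel identity $\|u\|_{L^2_{xt}}=\|u\|_{X_{0,0,\varphi}}$.
\begin{itemize}
\item For \eqref{2.02}, interpolate $\|D_x^{1/6}u\|_{L^6}\lesssim\|u\|_{X_{0,b}}$ with $L^2$ at parameter $\theta$, where $\frac14=\frac{\theta}{6}+\frac{1-\theta}{2}$. This gives $\theta=\frac34$, hence derivative gain $\frac34\cdot\frac16=\frac18$ and $b$-exponent $\frac34b$, which is exactly \eqref{2.02}.
\item For \eqref{2.04}, interpolate \eqref{2.03} in $L^8$ with $L^2$: $\frac14=\frac{\theta}{8}+\frac{1-\theta}{2}$ gives $\theta=\frac23$, hence exponent $\frac23 b$, which is \eqref{2.04}.
\end{itemize}

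The main obstacle is justifying the global-in-time $L^8_{xt}$ Strichartz bound without derivatives for \eqref{2.03}. It is the endpoint $\theta=\frac34$, $\alpha=0$ of the Kenig--Ponce--Vega family, which I will quote. Everything else is transference plus Stein–Weiss type interpolation of weighted $L^2$ spaces, where the fractional derivative $D_x^{\sigma}$ is interpolated via an analytic family $D_x^{z\sigma}\langle\tau-\varphi\rangle^{-zb}$.
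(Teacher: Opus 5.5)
Your proposal is correct and takes essentially the same route as the paper. The paper simply cites \eqref{2.01} and \eqref{2.03} as the Kenig--Ponce--Vega $X_{0,b}$ estimates, which you rederive from the free Strichartz bounds with $(\theta,\alpha)=(\frac{2}{3},\frac{1}{2})$ and $(\frac{3}{4},0)$ plus transference; it then obtains \eqref{2.02} and \eqref{2.04} by interpolating against $\|u\|_{L^2_{xt}}=\|u\|_{X_{0,0,\varphi}}$ with exactly your parameters $\theta=\frac{3}{4}$ and $\theta=\frac{2}{3}$. The $L^4$ digression in your transference paragraph is garbled and unused, so delete it: the choice $\theta=\alpha=\frac{1}{2}$ gives $\|D_x^{1/8}U(t)u_0\|_{L^8_tL^4_x}\lesssim\|u_0\|_{L^2}$, not a $D_x^{1/4}$ bound in $L^4_{xt}$.
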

\noindent{\bf Proof.}
For \eqref{2.01} and \eqref{2.03}, we refer to \cite[Theorem 2.1]{KPV1993Duke}, respectively.
By applying the interpolation theorem in mixed Lebesgue spaces \cite[Theorem 1]{BP1961} to \eqref{2.01} and
\begin{eqnarray}
&&\|u\|_{L_{xt}^{2}}\leq C\|u\|_{X_{0,0,\varphi}},\label{2.05}
\end{eqnarray}
we obtain \eqref{2.02}. By applying the interpolation theorem in mixed Lebesgue spaces
\cite[Theorem 1]{BP1961} to \eqref{2.03} and \eqref{2.05}, we obtain \eqref{2.04}.

The proof of Lemma 2.1 is completed.

\begin{Lemma}\label{lem2.2}
Let $b=\frac{1}{2}+\epsilon$, $0<\epsilon\leq\frac{1}{2026}$, $0\leq s\leq\frac{1}{2}$, we have
\begin{eqnarray}
&&\left\|I^{s}(f,g)\right\|_{L_{xt}^{2}}\leq C\|f\|_{X_{0,\frac{3+2s}{4}b,\varphi}}\|g\|_{X_{0,\frac{3+2s}{4}b,\varphi}},\label{2.06}
\end{eqnarray}
where
\begin{eqnarray*}
&&\mathscr{F}_{xt}I^{s}(f,g)=\int_{\substack{
\xi=\xi_{1}+\xi_{2}\\
\tau=\tau_{1}+\tau_{2}}}\left|\xi_{1}^{2}-\xi_{2}^{2}\right|^{s}
\mathscr{F}_{xt}f(\xi_{1},\tau_{1})\mathscr{F}_{xt}g(\xi_{2},\tau_{2})d\xi_{1}d\tau_{1}.
\end{eqnarray*}
\end{Lemma}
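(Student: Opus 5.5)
The plan is to interpolate between the endpoint cases $s=0$ and $s=\frac{1}{2}$, using bilinear complex interpolation. At $s=0$ the multiplier is $1$, so $I^{0}(f,g)=fg$ and the estimate reduces to
\[
\|fg\|_{L_{xt}^{2}}\leq \|f\|_{L_{xt}^{4}}\|g\|_{L_{xt}^{4}},
\]
with each factor bounded by \eqref{2.02}-type Strichartz control. The exponent required at $s=0$ is $\frac{3}{4}b$, and \eqref{2.04} gives $L_{xt}^{4}$ control with the smaller weight $\frac{2}{3}b\leq\frac{3}{4}b$. Since $X_{0,\frac{3}{4}b,\varphi}\hookrightarrow X_{0,\frac{2}{3}b,\varphi}$, the $s=0$ case follows directly from \eqref{2.04} and H\"older.

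At $s=\frac{1}{2}$ the exponent is $\frac{3+1}{4}b=b$, and the claim is the classical bilinear smoothing estimate for the Airy group,
\[
\Big\|\int |\xi_1^2-\xi_2^2|^{1/2}\hat f\hat g\Big\|_{L^2}\leq C\|f\|_{X_{0,b}}\|g\|_{X_{0,b}}.
\]
I would prove it first for free solutions $f=U(t)f_0$, $g=U(t)g_0$. In that case $\mathscr{F}_{xt}(fg)$ is supported on $\tau=\xi_1^3+\xi_2^3$. Changing variables $(\xi_1,\xi_2)\mapsto(\xi,\tau)$ gives the Jacobian $3|\xi_1^2-\xi_2^2|$, and Plancherel then yields
\[
\|I^{1/2}(f,g)\|_{L^2}^2 = c\int |\xi_1^2-\xi_2^2|\,|\hat f_0(\xi_1)|^2|\hat g_0(\xi_2)|^2\,|\xi_1^2-\xi_2^2|^{-1}\,d\xi_1 d\xi_2 = c\|f_0\|_{L^2}^2\|g_0\|_{L^2}^2 .
\]
The standard transference argument then extends this to $X_{0,b}$ with $b>\frac12$: write $u=\int e^{it\lambda}U(t)u_\lambda\,d\lambda$ and apply Cauchy--Schwarz in $\lambda$ using $\langle\lambda\rangle^{-b}\in L^2$. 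This is essentially Kenig--Ponce--Vega, and I would cite it if needed.

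For intermediate $s$, I would form the analytic family $T_z(f,g)=I^{z}(f,g)$ on the strip $0\leq \Re z\leq\frac12$. The weights $\langle\tau-\varphi\rangle^{\theta(z)}$ would be absorbed by writing $\hat f=\langle\tau-\varphi\rangle^{-\beta(z)}F$ with $\beta(z)=\frac{3+2z}{4}b$ linear in $z$. On $\Re z=0$ and $\Re z=\frac12$ the imaginary parts contribute only unimodular factors $|\xi_1^2-\xi_2^2|^{i\gamma}$ and $\langle\cdot\rangle^{i\gamma'}$. These do not change $L^2$ norms of $F$, but they do alter the functions entering the endpoint estimates. The endpoint estimates are proved by taking absolute values on the Fourier side: the $L^4$ Strichartz step can be reduced to positive Fourier transforms, since $\|fg\|_{L^2}$ is controlled by the convolution of $|\hat f|$ and $|\hat g|$. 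Because of this, they hold uniformly with constants of at most polynomial growth in $\Im z$. Stein's interpolation theorem (or simply bilinear complex interpolation in $F,G$ at fixed $\theta=2s$) then gives \eqref{2.06}, since the weight exponents interpolate linearly: $(1-\theta)\frac34 b+\theta b=\frac{3+2s}{4}b$.

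The main obstacle is making the interpolation rigorous. The cleanest route avoids Stein altogether by a direct H\"older argument. Write the convolution integrand with positive functions $F,G\in L^2$ as $|\xi_1^2-\xi_2^2|^{s}\prod\langle\sigma_j\rangle^{-\frac{3+2s}{4}b}F_jG_j$, and split it as (endpoint-$0$ integrand)$^{1-2s}$ times (endpoint-$\frac12$ integrand)$^{2s}$. Then H\"older in the convolution variables, followed by H\"older in $(\xi,\tau)$, bounds the $L^2$ norm by $(\text{endpoint}_0)^{1-2s}(\text{endpoint}_{1/2})^{2s}$. Since both endpoint bounds hold for nonnegative Fourier data, this gives the result. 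I would check that H\"older step carefully; it is the one place where the argument could fail.
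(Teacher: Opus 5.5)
Your proposal is correct. The paper offers no argument beyond citing \cite{G2002}, but your route is the standard one and is exactly how the paper proves its KdV--BO analogue, Lemma 2.4. That route interpolates the $s=0$ bound (H\"older plus \eqref{2.04}) with the $s=\frac12$ bilinear Airy smoothing estimate. Your pointwise H\"older splitting, applied after replacing $\mathscr{F}_{xt}f,\mathscr{F}_{xt}g$ by their moduli (allowed because the multiplier is nonnegative), is a fully rigorous substitute for the interpolation theorem the paper invokes.

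The only slip is harmless: the free-solution computation gives an inequality rather than an identity, because $\xi_{1}\mapsto\xi_{1}^{3}+(\xi-\xi_{1})^{3}$ is two-to-one for fixed $\xi$.
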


For Lemma 2.2, we refer to \cite{G2002}.

By using Lemma 2.2, we have the following corollary.

\begin{Corollary}\label{cor2.1}
Let $b=\frac{1}{2}+\epsilon$, $0<\epsilon\leq\frac{1}{2026}$, $0\leq s\leq\frac{1}{2}$ and $N_{1}\gg N_{2}$.
Suppose that $\supp\mathscr{F}_{x}f\subset\{\xi:|\xi|\sim N_{1}\}$ and $\supp\mathscr{F}_{x}g\subset\{\xi:|\xi|\sim N_{2}\}$.
Then,  we have
\begin{eqnarray*}
&&\left\|fg\right\|_{L_{xt}^{2}}\leq CN_{1}^{-2s}\|f\|_{X_{0,\frac{3+2s}{4}b,\varphi}}\|g\|_{X_{0,\frac{3+2s}{4}b,\varphi}}.
\end{eqnarray*}

\end{Corollary}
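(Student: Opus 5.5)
The plan is to apply Lemma \ref{lem2.2} directly to the product $fg$ and to replace the symbol weight $|\xi_{1}^{2}-\xi_{2}^{2}|^{s}$ by a pure power of $N_{1}$ on the frequency supports. By Plancherel,
\begin{eqnarray*}
\|fg\|_{L_{xt}^{2}}=C\left\|\int_{\substack{\xi=\xi_{1}+\xi_{2}\\ \tau=\tau_{1}+\tau_{2}}}\mathscr{F}_{xt}f(\xi_{1},\tau_{1})\mathscr{F}_{xt}g(\xi_{2},\tau_{2})d\xi_{1}d\tau_{1}\right\|_{L_{\xi\tau}^{2}}.
\end{eqnarray*}
On the region of integration we have $|\xi_{1}|\sim N_{1}$ and $|\xi_{2}|\sim N_{2}$. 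Since $N_{1}\gg N_{2}$, we may take the implicit constant large enough that $|\xi_{2}|\leq\frac{1}{2}|\xi_{1}|$. This gives
\begin{eqnarray*}
|\xi_{1}^{2}-\xi_{2}^{2}|=|\xi_{1}-\xi_{2}||\xi_{1}+\xi_{2}|\geq\frac{1}{4}|\xi_{1}|^{2}\geq cN_{1}^{2},
\end{eqnarray*}
so that $1\leq CN_{1}^{-2s}|\xi_{1}^{2}-\xi_{2}^{2}|^{s}$ on the support.

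To use this, I will first reduce to nonnegative Fourier transforms. Replacing $f,g$ by the functions whose space-time Fourier transforms are $|\mathscr{F}_{xt}f|$ and $|\mathscr{F}_{xt}g|$ leaves the $X_{0,\frac{3+2s}{4}b,\varphi}$ norms unchanged and can only increase the $L^{2}$ norm of the convolution integral. With nonnegative integrands, the pointwise bound above can be inserted under the integral. This gives
\begin{eqnarray*}
\|fg\|_{L_{xt}^{2}}\leq CN_{1}^{-2s}\left\|I^{s}(\tilde f,\tilde g)\right\|_{L_{xt}^{2}},
\end{eqnarray*}
where $\tilde f,\tilde g$ are the modified functions.

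Finally, apply \eqref{2.06} from Lemma \ref{lem2.2} to $I^{s}(\tilde f,\tilde g)$; this is valid for $0\leq s\leq\frac{1}{2}$, $b=\frac{1}{2}+\epsilon$. Then use $\|\tilde f\|_{X_{0,\frac{3+2s}{4}b,\varphi}}=\|f\|_{X_{0,\frac{3+2s}{4}b,\varphi}}$ and the same identity for $g$ to conclude.

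There is no genuine obstacle here. The only points needing care are quantifying $N_{1}\gg N_{2}$ so that $|\xi_{2}|\leq|\xi_{1}|/2$, which gives the lower bound on $|\xi_{1}^{2}-\xi_{2}^{2}|$, and justifying the passage to absolute values of the Fourier transforms so that the multiplier comparison is legitimate.
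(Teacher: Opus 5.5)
Your proposal is correct and follows the same route as the paper, which derives the corollary from Lemma 2.2 without further detail; the intended argument is your observation that $|\xi_{1}^{2}-\xi_{2}^{2}|\geq cN_{1}^{2}$ on the supports when $N_{1}\gg N_{2}$. Your passage to the functions with Fourier transforms $|\mathscr{F}_{xt}f|$ and $|\mathscr{F}_{xt}g|$ makes rigorous the multiplier comparison that the paper leaves implicit.
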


\begin{Lemma}\label{lem2.3}
Let $b=\frac{1}{2}+\epsilon$, $0<\epsilon\leq\frac{1}{2026}$, we have
\begin{eqnarray}
&&\left\|v\right\|_{L_{xt}^{8}}\leq C\|v\|_{X_{0,b,\phi}},\label{2.07}\\
&&\left\|v\right\|_{L_{xt}^{4}}\leq C\|v\|_{X_{0,\frac{2}{3}b,\phi}},\label{2.08}\\
&&\left\|D_{x}^{\frac{1}{6}}v\right\|_{L_{xt}^{6}}\leq C\|v\|_{X_{0,b,\phi}},\label{2.09}\\
&&\left\|D_{x}^{\frac{1}{8}}v\right\|_{L_{xt}^{4}}\leq C\|v\|_{X_{0,\frac{3}{4}b,\phi}}.\label{2.010}
\end{eqnarray}

\end{Lemma}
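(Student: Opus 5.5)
The plan is to mirror the proof of Lemma \ref{lem2.1}: first establish the two endpoint estimates \eqref{2.07} and \eqref{2.09} for free solutions of the KdV-BO linear group $V(t)$, then transfer them to $X_{0,b,\phi}$, and finally obtain \eqref{2.08} and \eqref{2.010} by interpolation with the trivial bound
\begin{eqnarray*}
\|v\|_{L_{xt}^{2}}\leq C\|v\|_{X_{0,0,\phi}}.
\end{eqnarray*}

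For the endpoint estimates, we treat the dispersion relation $\phi(\xi)=-\nu\xi|\xi|+\xi^{3}$ as a perturbation of $\varphi(\xi)=\xi^{3}$. We have $\phi''(\xi)=-2\nu\,\mathrm{sgn}(\xi)+6\xi$. Hence for $|\xi|\geq 2A$, where $A=\max\{1,\frac{2|\nu|}{3}\}$, we get $|\phi''(\xi)|\geq 6|\xi|-2|\nu|\geq 3|\xi|$, so $|\phi''(\xi)|\sim|\xi|$, exactly as for $\varphi$. Split $v=P_{\leq 2A}v+P_{\geq 2A}v$.

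For the high-frequency piece, we apply the Kenig--Ponce--Vega oscillatory integral machinery behind \cite[Theorem 2.1]{KPV1993Duke}. It needs only $|\phi''(\xi)|\gtrsim|\xi|$ and that $\phi''$ has finitely many monotonicity intervals. It gives the smoothing/Strichartz estimate
\begin{eqnarray*}
\|D_x^{1/6}P_{\geq 2A}V(t)g\|_{L_{xt}^{6}}\leq C\|g\|_{L^{2}},
\end{eqnarray*}
together with the $L_{xt}^{8}$ estimate: interpolate $D_x^{1/4}$ in $L_x^\infty L_t^2$-type local smoothing with the maximal function bound, or use the dispersive decay $|t|^{-1/3}$ via $TT^{*}$.

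For the low-frequency piece $|\xi|\leq 2A$, Bernstein's inequality and unitarity give
\begin{eqnarray*}
\|P_{\leq 2A}V(t)g\|_{L_x^{p}}\leq C_{A}\|g\|_{L^{2}}\qquad (2\leq p\leq\infty).
\end{eqnarray*}
In addition, $|\xi|^{1/6}\leq C$ on this region. Since we need global-in-time $L_t^{6}$ and $L_t^{8}$ integrability, it is safer to use the dispersive estimate there as well. For $|\xi|\leq 2A$ the phase still has $\phi'''(\xi)=6\neq0$, so van der Corput gives the decay $|t|^{-1/3}$ for the localized kernel. Then the $TT^{*}$ argument yields the Strichartz pair $L_t^{q}L_x^{r}$ with $\frac{3}{q}+\frac{1}{r}=\frac{1}{2}$, in particular $L_{xt}^{8}$. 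Combined with Bernstein, this gives the $L^{6}$ bound too.

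The transfer principle then converts these to $X_{0,b,\phi}$ with $b>\frac12$. Write
\begin{eqnarray*}
v(x,t)=\int e^{it\lambda}\,V(t)g_{\lambda}(x)\,d\lambda,\qquad \widehat{g_\lambda}(\xi)=\mathscr{F}_{xt}v(\xi,\lambda+\phi(\xi)),
\end{eqnarray*}
and use Cauchy--Schwarz in $\lambda$ with the weight $\langle\lambda\rangle^{-b}$. This yields \eqref{2.07} and \eqref{2.09}.

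Finally, apply the mixed-norm interpolation theorem \cite[Theorem 1]{BP1961}, as in Lemma \ref{lem2.1}. Interpolating \eqref{2.07} with the $L^{2}$ bound at $\theta=\frac23$ gives $L^{4}$ with $b$-index $\frac23 b$, which is \eqref{2.08}. Interpolating \eqref{2.09} with the $L^{2}$ bound at $\theta=\frac34$ gives $D_x^{1/8}$ in $L^{4}$ with index $\frac34 b$, which is \eqref{2.010}.

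The main obstacle is the endpoint Strichartz/smoothing estimate for $\phi$. The nonsmooth term $\xi|\xi|$ and the region near $\xi=\pm\frac{\nu}{3}$, where $\phi''$ vanishes, prevent directly citing the result for $\xi^{3}$. This is exactly why $A$ is introduced and the frequency splitting at $2A$ is made. I would first try to verify the hypotheses of the KPV oscillatory integral lemma on $|\xi|\geq 2A$ and treat $|\xi|\leq 2A$ by the $\phi'''$ van der Corput bound.
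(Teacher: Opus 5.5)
Your arguments for \eqref{2.07}, for the high-frequency part of \eqref{2.09}, and the interpolations giving \eqref{2.08} and \eqref{2.010} agree with the paper, which takes \eqref{2.07} and the $P_{\geq 2A}$ piece of \eqref{2.09} from the literature. The gap is in the low-frequency part of \eqref{2.09}.

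\textbf{Why the low-frequency step fails.} The dispersive bound $|t|^{-1/3}$ only yields Strichartz pairs with $\frac{3}{q}+\frac{1}{r}=\frac12$. The diagonal pair $q=r=6$ is not on this line; the only admissible pair with $q=6$ is $r=\infty$. Bernstein cannot help, since it raises the spatial exponent and cannot bring you from $L^{\infty}_{x}$ down to $L^{6}_{x}$.

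More seriously, the free-solution estimate your transfer argument needs,
$\|D_x^{1/6}P_{\leq 2A}V(t)g\|_{L^6_{xt}}\le C\|g\|_{L^2}$, is false when $\nu>0$:
\begin{itemize}
\item Take the inflection point $\xi_0=\nu/3<2A$. For $\xi>0$ one has exactly $\phi(\xi_0+\eta)=\phi(\xi_0)+\phi'(\xi_0)\eta+\eta^3$.
\item If $\mathscr{F}_x g$ is supported in $[\xi_0+\delta,\xi_0+2\delta]$, then $|V(t)g(x)|=|U(t)h(x+t\phi'(\xi_0))|$, where $\mathscr{F}_x h(\eta)=\mathscr{F}_x g(\xi_0+\eta)$.
\item On this support the weight $|\xi|^{1/6}$ is essentially the constant $\xi_0^{1/6}$.
\item Airy scaling then gives $\|D_x^{1/6}V(t)g\|_{L^6_{xt}}\approx \xi_0^{1/6}\delta^{-1/6}\|g\|_{L^2}$, which is unbounded as $\delta\to 0$.
\end{itemize}
Global-in-time $L^6_{xt}$ for Airy genuinely needs a factor $|\eta|^{1/6}$ vanishing at the inflection point, and $|\xi|^{1/6}$ does not vanish there. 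So no free-solution estimate combined with the transfer principle can produce this piece.

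\textbf{The missing idea.} Work directly at the $X_{0,b,\phi}$ level, where you control something free solutions lack: $\|v\|_{L^2_{xt}}=\|v\|_{X_{0,0,\phi}}\le\|v\|_{X_{0,b,\phi}}$. Set $F=D_x^{1/6}P_{\leq 2A}v$. H\"older gives $\|F\|_{L^6_{xt}}\le\|F\|_{L^2_{xt}}^{1/9}\|F\|_{L^8_{xt}}^{8/9}$. Then \eqref{2.07} and the bound $|\xi|^{1/6}\le(2A)^{1/6}$ on the support give $\|F\|_{L^6_{xt}}\le C\|v\|_{X_{0,b,\phi}}$. This is exactly the paper's \eqref{2.013}, and with this replacement the rest of your plan goes through.
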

\noindent{\bf Proof.}
For \eqref{2.07}, we refer to \cite[Lemma 2.1]{GH2004}. By applying the interpolation theorem in mixed Lebesgue spaces
\cite[Theorem 1]{BP1961} to \eqref{2.07} and \begin{eqnarray}
&&\|v\|_{L_{xt}^{2}}\leq C\|v\|_{X_{0,0,\phi}},\label{2.011}
\end{eqnarray}
we obtain \eqref{2.08}. For \eqref{2.09}, on the one hand, from \cite[(2.4)]{GH2004}, we have
\begin{eqnarray}
&&\left\|D_{x}^{\frac{1}{6}}P_{\geq2A}v\right\|_{L_{xt}^{6}}\leq C\|v\|_{X_{0,b,\phi}}.\label{2.012}
\end{eqnarray}
On the other hand, by using \eqref{2.07}, \eqref{2.011} and interpolation theorem, we have
\begin{eqnarray}
&&\left\|D_{x}^{\frac{1}{6}}P_{\leq2A}v\right\|_{L_{xt}^{6}}\leq C\left\|D_{x}^{\frac{1}{6}}P_{\leq2A}v\right\|_{L_{xt}^{2}}^{\frac{1}{9}}
\left\|D_{x}^{\frac{1}{6}}P_{\leq2A}v\right\|_{L_{xt}^{8}}^{\frac{8}{9}}\nonumber\\
&&\leq C\left\|D_{x}^{\frac{1}{6}}P_{\leq2A}v\right\|_{X_{0,b,\phi}}\leq C\|v\|_{X_{0,b,\phi}}.\label{2.013}
\end{eqnarray}
From \eqref{2.012} and \eqref{2.013}, we have
\begin{eqnarray}
&&\left\|D_{x}^{\frac{1}{6}}v\right\|_{L_{xt}^{6}}\leq C\left(\left\|D_{x}^{\frac{1}{6}}P_{\geq2A}v\right\|_{L_{xt}^{6}}+\left\|D_{x}^{\frac{1}{6}}P_{\leq2A}v\right\|_{L_{xt}^{6}}\right)
\leq C\|v\|_{X_{0,b,\phi}}.\label{2.014}
\end{eqnarray}
For \eqref{2.010}, by applying the interpolation theorem in mixed Lebesgue spaces
\cite[Theorem 1]{BP1961} to \eqref{2.09} and \eqref{2.011}, we obtain \eqref{2.010}.

The proof of Lemma 2.3 is completed.

\begin{Lemma}\label{lem2.4}
Let $b=\frac{1}{2}+\epsilon$, $0<\epsilon\leq\frac{1}{2026}$, $0\leq s\leq\frac{1}{2}$, $\phi(\xi)=-\nu\xi|\xi|+\xi^{3}$, we have
\begin{eqnarray}
&&\left\|K^{s}(f,g)\right\|_{L_{xt}^{2}}\leq C\|f\|_{X_{0,\frac{2(1+s)}{3}b,\phi}}\|g\|_{X_{0,\frac{2(1+s)}{3}b,\phi}},\label{2.015}
\end{eqnarray}
where
\begin{eqnarray*}
&&\mathscr{F}_{xt}K^{s}(f,g)=\int_{\substack{
\xi=\xi_{1}+\xi_{2}\\
\tau=\tau_{1}+\tau_{2}}}\left|\phi^{\prime}(\xi_{1})-\phi^{\prime}(\xi_{2})\right|^{s}
\mathscr{F}_{xt}f(\xi_{1},\tau_{1})\mathscr{F}_{xt}g(\xi_{2},\tau_{2})d\xi_{1}d\tau_{1}.
\end{eqnarray*}
\end{Lemma}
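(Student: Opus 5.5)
We prove Lemma \ref{lem2.4} by interpolating between two endpoint estimates, exactly as Lemma 2.2 is obtained for the Airy phase. The two endpoints are the trivial bound at $s=0$ and the sharp bilinear smoothing bound at $s=\frac12$.

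\emph{Case $s=0$.} Here $K^{0}(f,g)=fg$. By H\"older's inequality and \eqref{2.08},
\begin{eqnarray*}
\|fg\|_{L_{xt}^{2}}\leq \|f\|_{L_{xt}^{4}}\|g\|_{L_{xt}^{4}}\leq C\|f\|_{X_{0,\frac{2}{3}b,\phi}}\|g\|_{X_{0,\frac{2}{3}b,\phi}},
\end{eqnarray*}
which is \eqref{2.015} with $s=0$, since $\frac{2(1+0)}{3}b=\frac23 b$.

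\emph{Case $s=\frac12$.} We need
\begin{eqnarray*}
\left\|K^{\frac12}(f,g)\right\|_{L_{xt}^{2}}\leq C\|f\|_{X_{0,b,\phi}}\|g\|_{X_{0,b,\phi}}.
\end{eqnarray*}
We first treat free solutions $f=V(t)f_0$ and $g=V(t)g_0$. The space-time Fourier transform of $K^{\frac12}(f,g)$ is supported on $\tau=\phi(\xi_1)+\phi(\xi-\xi_1)$. By Plancherel and Cauchy--Schwarz in $\xi_1$, it suffices to change variables $\xi_1\mapsto \tau=\phi(\xi_1)+\phi(\xi-\xi_1)$ for fixed $\xi$. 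The Jacobian is $|\phi'(\xi_1)-\phi'(\xi_2)|$, which exactly cancels the square of the weight $|\phi'(\xi_1)-\phi'(\xi_2)|^{1/2}$. This gives
\begin{eqnarray*}
\left\|K^{\frac12}(f,g)\right\|_{L_{xt}^{2}}\leq C\|f_0\|_{L^2}\|g_0\|_{L^2}.
\end{eqnarray*}
This is the standard Kenig--Ponce--Vega/Gr\"unrock argument. The only point needing care is that $\xi_1\mapsto\phi(\xi_1)+\phi(\xi-\xi_1)$ must be finite-to-one, with a uniformly bounded number of preimages, so that the change of variables costs only a constant.

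\emph{Finite-to-one property.} Since $\phi'(\xi)=3\xi^2-2\nu|\xi|$ is piecewise polynomial, the equation $\phi'(\xi_1)=\phi'(\xi-\xi_1)$ has at most a bounded number of solutions on each of the finitely many sign regions of $\xi_1$ and $\xi-\xi_1$. The map is therefore piecewise monotone with $O(1)$ pieces, and the $O(1)$ preimages are harmless.

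\emph{Transfer to $X_{0,b,\phi}$.} We pass from free solutions to general $f,g$ by the standard transference principle. Write $f$ as a superposition $\int e^{it\lambda}V(t)f_\lambda\,d\lambda$ of modulated free waves, and use $b>\frac12$, so that $\int\langle\lambda\rangle^{-2b}d\lambda<\infty$.

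\emph{Interpolation.} For $0<s<\frac12$ we interpolate bilinearly between the two endpoints. Consider the analytic family of bilinear operators with multiplier $|\phi'(\xi_1)-\phi'(\xi_2)|^{z}$, $0\leq \operatorname{Re}z\leq\frac12$, acting on $X_{0,\beta(z),\phi}\times X_{0,\beta(z),\phi}$ with the exponent $\beta$ depending linearly on $z$. The imaginary powers $|\cdot|^{i\gamma}$ have modulus one, so both endpoint bounds hold uniformly on the boundary lines. Stein's complex interpolation (equivalently, bilinear interpolation of the weighted $L^2$ spaces $X_{0,\beta,\phi}$) then gives the exponent
\begin{eqnarray*}
(1-2s)\cdot\tfrac{2}{3}b+2s\cdot b=\tfrac{2(1+s)}{3}b,
\end{eqnarray*}
which matches \eqref{2.015}.

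\emph{Main obstacle.} The step requiring the most care is the $s=\frac12$ endpoint for the non-homogeneous, non-smooth phase $\phi$. One must verify the bounded-preimage property near $\xi_1=0$, $\xi_2=0$ and $\xi_1=\xi_2$, where the Jacobian degenerates and the weight vanishes. The cancellation of Jacobian and weight still makes the change of variables work there, but the piece decomposition has to be checked explicitly. This endpoint is essentially the estimate of \cite{GH2004} for the KdV-BO phase, and one could cite it there instead.
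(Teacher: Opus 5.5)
Your proposal is correct and follows the paper's argument: the same two endpoints, $s=0$ from H\"older and \eqref{2.08} and $s=\frac12$ from the bilinear smoothing estimate, combined by interpolation to give the exponent $\frac{2(1+s)}{3}b$; the only difference is that the paper cites the $s=\frac12$ endpoint from \cite[Lemma 2.5]{LW2010} rather than rederiving it by your change-of-variables and transference argument. Two small points to tighten. First, the uniform bound on the line $\mathrm{Re}\,z=0$ holds because the $X_{0,\beta,\phi}$ norms depend only on $|\mathscr{F}_{xt}f|$ and $|\mathscr{F}_{xt}g|$, so the H\"older endpoint applies once you replace the transforms by their moduli. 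Second, the map $\xi_1\mapsto\phi(\xi_1)+\phi(\xi-\xi_1)$ is constant on whole intervals for finitely many $\xi$ (namely $\xi=0$, and $\xi=\pm\frac{2\nu}{3}$ when $\nu>0$); this is harmless, since it is a null set of $\xi$ on which the weight vanishes, but your bounded-preimage claim should exclude it.
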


\noindent{\bf Proof.}
On the one hand, from \cite[Lemma 2.5]{LW2010}, we have
\begin{eqnarray}
&&\left\|K^{\frac{1}{2}}(f,g)\right\|_{L_{xt}^{2}}\leq C\|f\|_{X_{0,b,\phi}}\|g\|_{X_{0,b,\phi}},\label{2.016}
\end{eqnarray}
On the other hand, by using H\"{o}lder  inequality and \eqref{2.08}, we have
\begin{eqnarray}
&&\left\|fg\right\|_{L_{xt}^{2}}\leq C\|f\|_{L_{xt}^{4}}\|g\|_{L_{xt}^{4}}
\leq C\|f\|_{X_{0,\frac{2}{3}b,\phi}}\|g\|_{X_{0,\frac{2}{3}b,\phi}}.\label{2.017}
\end{eqnarray}
By applying the interpolation theorem in mixed Lebesgue spaces
\cite[Theorem 1]{BP1961} to \eqref{2.016} and \eqref{2.017}, we obtain \eqref{2.015}.

The proof of Lemma 2.4 is completed.

By using Lemma 2.4, we have the following corollary.

\begin{Corollary}\label{cor2.2}
Let $b=\frac{1}{2}+\epsilon$, $0<\epsilon\leq\frac{1}{2026}$, $0\leq s\leq\frac{1}{2}$ and $N_{1}\geq 2A$, $N_{1}\gg N_{2}$. Suppose that $\supp\mathscr{F}_{x}f\subset\{\xi:|\xi|\sim N_{1}\}$ and $\supp\mathscr{F}_{x}g\subset\{\xi:|\xi|\sim N_{2}\}$. Then,  we have
\begin{eqnarray*}
&&\left\|fg\right\|_{L_{xt}^{2}}\leq CN_{1}^{-2s}\|f\|_{X_{0,\frac{2(1+s)}{3}b,\phi}}\|g\|_{X_{0,\frac{2(1+s)}{3}b,\phi}}.
\end{eqnarray*}

\end{Corollary}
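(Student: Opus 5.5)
The idea is to apply Lemma 2.4 directly to $f$ and $g$. The frequency separation $N_1\gg N_2$, together with $N_1\geq 2A$, should make the weight $|\phi'(\xi_1)-\phi'(\xi_2)|^{s}$ comparable to $N_1^{2s}$ on the whole support of the convolution. Here $\phi'(\xi)=-2\nu|\xi|+3\xi^{2}$. For $\xi_1\in\supp\mathscr{F}_x f$ and $\xi_2\in\supp\mathscr{F}_x g$ we have $|\xi_1|\sim N_1$ and $|\xi_2|\sim N_2$, and we write
\begin{eqnarray*}
\phi'(\xi_1)-\phi'(\xi_2)=3(\xi_1^{2}-\xi_2^{2})-2\nu(|\xi_1|-|\xi_2|)=(|\xi_1|-|\xi_2|)\big(3(|\xi_1|+|\xi_2|)-2\nu\big).
\end{eqnarray*}
Since $N_1\gg N_2$, the first factor satisfies $|\xi_1|-|\xi_2|\geq c N_1$.

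For the second factor we use $A=\max\{1,\frac{2|\nu|}{3}\}$, which gives $2|\nu|\leq 3A$. Because $|\xi_1|\gtrsim N_1\geq 2A$, the term $3(|\xi_1|+|\xi_2|)$ dominates $2|\nu|$. This is the one delicate point. The hypothesis $N_1\geq 2A$ is exactly what rules out the degenerate region where the Benjamin--Ono term could cancel the cubic dispersion. We must make sure that the constants hidden in $|\xi_1|\sim N_1$ allow $|\xi_1|\geq\frac{3}{4}\cdot 2A$, or something similar. If necessary, the comparability constants can be taken close to $1$, or the requirement $N_1\gg N_2$ can be used to absorb the constant. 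Then $3(|\xi_1|+|\xi_2|)-2\nu\geq 3|\xi_1|-3A\geq c N_1$, and hence $|\phi'(\xi_1)-\phi'(\xi_2)|\geq cN_1^{2}$ on the support.

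Next we pass from $fg$ to $K^{s}(f,g)$. We may assume $\mathscr{F}_{xt}f,\mathscr{F}_{xt}g\geq0$; otherwise we replace them by their absolute values, which changes neither the $X_{0,b',\phi}$ norms nor the direction of the inequality we need, since $|\mathscr{F}_{xt}(fg)|\leq\mathscr{F}_{xt}(|\hat f|^{\vee}|\hat g|^{\vee})$. With nonnegative transforms, the pointwise lower bound on the weight gives
\begin{eqnarray*}
|\mathscr{F}_{xt}(fg)(\xi,\tau)|\leq CN_1^{-2s}\mathscr{F}_{xt}K^{s}(f,g)(\xi,\tau).
\end{eqnarray*}
By Plancherel, $\|fg\|_{L^2_{xt}}\leq CN_1^{-2s}\|K^{s}(f,g)\|_{L^2_{xt}}$. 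Lemma 2.4 then bounds $\|K^{s}(f,g)\|_{L^2_{xt}}$ by $C\|f\|_{X_{0,\frac{2(1+s)}{3}b,\phi}}\|g\|_{X_{0,\frac{2(1+s)}{3}b,\phi}}$, which is the claimed estimate.

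The only real obstacle is the lower bound on $|\phi'(\xi_1)-\phi'(\xi_2)|$ near the threshold $2A$. Everything else is the same argument as for Corollary 2.1.
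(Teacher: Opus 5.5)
Your proposal is correct and follows the route the paper intends; the paper gives no argument beyond ``By using Lemma 2.4''. That route is a pointwise lower bound $|\phi'(\xi_1)-\phi'(\xi_2)|\gtrsim N_1^{2}$ on the supports, followed by Plancherel and Lemma 2.4 applied to the functions whose Fourier transforms are $|\mathscr{F}_{xt}f|$ and $|\mathscr{F}_{xt}g|$.

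On the threshold issue you flagged, your first remedy works: it amounts to a dyadic convention with $|\xi_1|\geq\frac34 N_1$. Your second remedy, using $N_1\gg N_2$, does not. Suppose $|\xi|\sim N_1$ only gives $|\xi_1|\geq N_1/2$, and take $\nu=3$ (so $A=2$), $N_1=4$, $|\xi_1|=2$. Then the factor $3(|\xi_1|+|\xi_2|)-2\nu$ equals $3|\xi_2|\lesssim N_2\ll N_1$, so $N_1\gg N_2$ hurts rather than helps.

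A convention-free fix is to handle $N_1\leq KA$ directly by H\"older and \eqref{2.08}. This uses $\frac{2(1+s)}{3}b\geq\frac23 b$ and $1\leq (KA)^{2s}N_1^{-2s}$, so the constant $C$ depends on $\nu$. Then run your argument only for $N_1>KA$ with $K$ large.
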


\begin{Lemma}\label{lem2.5}
Let $s\in\R$, $0<\epsilon\leq\frac{1}{2026}$, $0<T<1$, $0<b, b_{1}<\frac{1}{2}$, we have
\begin{eqnarray}
&&\left\|U(t)f\right\|_{X^{T}_{s,\frac{1}{2}+\epsilon,\varphi}}\leq C\|f\|_{H^{s}},\label{2.018}\\
&&\left\|\int_{0}^{t}U(t-t^{\prime})f(t^{\prime})dt^{\prime}\right\|_{X_{s,b,\varphi}^{T}}
\leq CT^{1-b_{1}-b}\|f\|_{X_{0,-b_{1},\varphi}^{T}},\label{2.019}\\
&&\left\|V(t)f\right\|_{X^{T}_{s,\frac{1}{2}+\epsilon,\phi}}\leq C\|f\|_{H^{s}},\label{2.020}\\
&&\left\|\int_{0}^{t}V(t-t^{\prime})f(t^{\prime})dt^{\prime}\right\|_{X_{s,b,\phi}^{T}}
\leq CT^{1-b_{1}-b}\|f\|_{X_{0,-b_{1},\phi}^{T}}.\label{2.021}
\end{eqnarray}
\end{Lemma}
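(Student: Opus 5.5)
We would prove the four estimates of Lemma 2.5 by the standard Bourgain-space arguments of Ginibre--Tsutsumi--Velo and Kenig--Ponce--Vega. We treat the groups $U(t)$ and $V(t)$ simultaneously, because the only property used is that $e^{it\varphi(\xi)}$, resp. $e^{it\phi(\xi)}$, is a unimodular Fourier multiplier with real phase. Throughout, the Sobolev weight $\langle\xi\rangle^{s}$ commutes with $U(t)$, $V(t)$ and with the Duhamel operator. So the weight rides along unchanged, and \eqref{2.019}, \eqref{2.021} are read (and proved) with the same Sobolev index $s$ on both sides; the case $s=0$ contains everything.

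\emph{Step 1: \eqref{2.018} and \eqref{2.020}.} Take the extension $\psi(t)U(t)f$, which coincides with $U(t)f$ on $[0,T]\subset[0,1]$. Its space-time Fourier transform is $\mathscr{F}_{t}\psi(\tau-\varphi(\xi))\mathscr{F}_{x}f(\xi)$. Substituting $\tau\mapsto\tau+\varphi(\xi)$ gives
\begin{eqnarray*}
\|\psi(t)U(t)f\|_{X_{s,b,\varphi}}=\|\psi\|_{H^{b}_{t}}\|f\|_{H^{s}}\quad\mbox{with } b=\tfrac{1}{2}+\epsilon .
\end{eqnarray*}
The restriction norm is bounded by this quantity. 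The identical computation with $\phi$ gives \eqref{2.020}.

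\emph{Step 2: reduction of the Duhamel estimates to a one-dimensional lemma.} Set $F(t)=U(-t)f(t)$. Then $\|u\|_{X_{0,b,\varphi}}=\|U(-t)u\|_{H^{b}_{t}L^{2}_{x}}$, and the Duhamel term equals $U(t)\int_{0}^{t}F(t')dt'$. By Plancherel in $x$, it therefore suffices to prove, for each fixed $\xi$ and for scalar functions $F$ of $t$, that
\begin{eqnarray*}
\Big\|\psi\big(\tfrac{t}{T}\big)\int_{0}^{t}F(t')dt'\Big\|_{H^{b}_{t}}\leq CT^{1-b-b_{1}}\|F\|_{H^{-b_{1}}_{t}},\qquad 0<b,b_{1}<\tfrac12,
\end{eqnarray*}
where $F$ may be taken supported in $[0,2T]$. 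This localization is legitimate because $0<b_{1}<\frac12$: multiplication by $\chi_{[0,T]}$ is bounded on $H^{-b_{1}}$ (by duality from $H^{b_{1}}$). Hence we may start from an almost-optimal extension of $f$ realizing the restriction norm $X^{T}_{0,-b_{1}}$, cut it off to $[0,T]$, and only the values on $[0,T]$ matter for the left side. The same reduction with $V$ and $\phi$ gives \eqref{2.021}.

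\emph{Step 3: the one-dimensional lemma, which is the main obstacle.} We write
\begin{eqnarray*}
\int_{0}^{t}F\,dt'=\int\widehat F(\tau)\frac{e^{it\tau}-1}{i\tau}\,d\tau
\end{eqnarray*}
and split the integral into $|\tau|T\le 1$ and $|\tau|T>1$.

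For $|\tau|T\le1$ we Taylor-expand $(e^{it\tau}-1)/(i\tau)=\sum_{k\ge1}t^{k}(i\tau)^{k-1}/k!$. Each term $\psi(t/T)t^{k}$ has $H^{b}$ norm $\lesssim T^{k+\frac12-b}C^{k}$ by scaling. By Cauchy--Schwarz,
\begin{eqnarray*}
\int_{|\tau|\le 1/T}|\tau|^{k-1}|\widehat F|\,d\tau\lesssim T^{-(k-1)-\frac12+b_{1}}\|F\|_{H^{-b_{1}}},
\end{eqnarray*}
and summing over $k$ yields $T^{1-b-b_{1}}$.

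For $|\tau|T>1$ we treat the two pieces separately. The piece $\psi(t/T)\int_{|\tau|>1/T}\widehat F/(i\tau)\,d\tau$ is a constant times a cutoff: $\|\psi(\cdot/T)\|_{H^{b}}\lesssim T^{\frac12-b}$, and by Cauchy--Schwarz the constant is $\lesssim T^{\frac12-b_{1}}\|F\|_{H^{-b_{1}}}$. The piece $\psi(t/T)\,\mathscr{F}^{-1}\big[\chi_{|\tau|>1/T}\widehat F/(i\tau)\big]$ is handled as follows. Multiplication by $\psi(\cdot/T)$ is bounded on $H^{b}$ uniformly for $T<1$ since $b<\frac12$, and then $\langle\tau\rangle^{b}|\tau|^{-1}\le T^{1-b-b_{1}}\langle\tau\rangle^{-b_{1}}$ on $|\tau|>1/T$. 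The delicate points are exactly the uniform-in-$T$ multiplier bound and the use of $b,b_{1}<\frac12$ in the cutoff and localization steps. Combining Steps 1--3 gives \eqref{2.018}--\eqref{2.021}.
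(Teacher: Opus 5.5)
Your proposal is correct and is the standard Kenig--Ponce--Vega / Ginibre--Tsutsumi--Velo argument that the paper invokes only by citation: a Taylor expansion of $(e^{it\tau}-1)/(i\tau)$ on $|\tau|\le 1/T$, a two-term splitting on $|\tau|>1/T$, and uniform $H^{b}$-boundedness of multiplication by $\psi(\cdot/T)$ for $b<\frac12$. You are also right to read \eqref{2.019} and \eqref{2.021} with the same index $s$ on both sides, since as literally written they fail for $s>0$. One small slip: in the low-frequency Cauchy--Schwarz step the weight $\langle\tau\rangle^{2b_{1}}$ costs $T^{-2b_{1}}$ on $|\tau|\le 1/T$, so the bound is $T^{-(k-1)-\frac{1}{2}-b_{1}}\|F\|_{H^{-b_{1}}}$ rather than $T^{-(k-1)-\frac{1}{2}+b_{1}}\|F\|_{H^{-b_{1}}}$; it is this corrected exponent that gives $T^{1-b-b_{1}}$ after multiplying by $T^{k+\frac{1}{2}-b}$.
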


For Lemma 2.5, we refer to \cite{KPV1993Duke, BDT1999}.

\begin{Lemma}\label{lem2.6}
Let $s\geq0$ and $0\leq b<\frac{1}{2}$. Then there exist constants $C_{1}, C_{2}$ such that
\begin{eqnarray*}
&&C_{1}\|u\|_{X_{s,b}^{T}}\leq \|\chi_{[0,T]}(\cdot)u\|_{X_{s,b}}\leq C_{2}\|u\|_{X_{s,b}^{T}},
\end{eqnarray*}
where $C_1$ and $C_2$ depend only on $b$ and are independent of $T$.

\end{Lemma}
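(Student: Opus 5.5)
The plan is to reduce Lemma~\ref{lem2.6} to the boundedness of multiplication by a sharp time cutoff on the spaces $X_{s,b}$ with $0\leq b<\frac{1}{2}$. Since the spatial weight $\langle\xi\rangle^{s}$ commutes with multiplication in time, we may work at each fixed frequency $\xi$. There we use the change of variables $\tau\mapsto\tau-\varphi(\xi)$ (respectively $\tau-\phi(\xi)$), which amounts to conjugating by the linear group, i.e. replacing $u$ by $U(-t)u$ (resp.\ $V(-t)u$). This commutes with multiplication by $\chi_{[0,T]}(t)$ and turns the $X_{s,b}$ norm into $\|\langle\xi\rangle^{s}\|\cdot\|_{H^{b}_{t}}\|_{L^{2}_{\xi}}$. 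Everything therefore reduces to the one-dimensional claim
\begin{eqnarray*}
\|\chi_{[0,T]}h\|_{H^{b}(\R)}\leq C(b)\|h\|_{H^{b}(\R)},\quad 0\leq b<\tfrac{1}{2},
\end{eqnarray*}
with $C(b)$ independent of $T$.

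For the upper inequality, take any extension $\bar u$ of $u$ from $\R\times[0,T]$. Then $\chi_{[0,T]}u=\chi_{[0,T]}\bar u$, so the claim gives $\|\chi_{[0,T]}u\|_{X_{s,b}}\leq C_{2}\|\bar u\|_{X_{s,b}}$, and taking the infimum over extensions yields the right-hand inequality. The lower inequality is immediate: $\chi_{[0,T]}u$ is itself an admissible extension, hence $\|u\|_{X^{T}_{s,b}}\leq\|\chi_{[0,T]}u\|_{X_{s,b}}$, so $C_{1}=1$ works.

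It remains to prove the one-dimensional claim with a $T$-independent constant. Write $\chi_{[0,T]}=\chi_{[0,\infty)}-\chi_{[T,\infty)}$. Multiplication by a Heaviside function $\chi_{[a,\infty)}$ is bounded on $H^{b}(\R)$ for $0\leq b<\frac{1}{2}$, with norm independent of $a$: translation is an isometry of $H^{b}$, so it suffices to treat $a=0$. For $a=0$ one can use the Sobolev–Slobodeckij characterization $\|h\|_{\dot H^{b}}^{2}\sim\iint\frac{|h(t)-h(t')|^{2}}{|t-t'|^{1+2b}}\,dt\,dt'$. Splitting the double integral according to whether $t,t'$ lie on the same side of $0$, the cross term is bounded by $\int_{0}^{\infty}|h(t)|^{2}t^{-2b}dt$. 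This is controlled by $\|h\|_{H^{b}}^{2}$ via the fractional Hardy inequality, which is valid precisely for $b<\frac{1}{2}$. (Alternatively, $\chi_{[0,\infty)}$ is an $A_{2}$-type multiplier, or one can note that $H^{b}$ is invariant under the Hilbert transform.) The case $b=0$ is trivial.

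The main obstacle is this Hardy-type step, since it is exactly where $b<\frac{1}{2}$ enters and where the uniformity in $T$ must be preserved. The translation-invariance argument is what guarantees $T$-independence, so the constants depend only on $b$, as the statement requires.
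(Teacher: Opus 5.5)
Your proof is correct. The paper gives no argument of its own for this lemma; it only cites \cite[Lemma 2.1]{BD2007}. So your proposal is a self-contained proof of the cited fact, not an alternative to an argument in the text.

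The structure is sound:
\begin{itemize}
\item Conjugating by $U(-t)$ (resp.\ $V(-t)$), which commutes with multiplication by $\chi_{[0,T]}(t)$, reduces the claim fibrewise in $\xi$ to $\|\chi_{[0,T]}h\|_{H^{b}}\leq C(b)\|h\|_{H^{b}}$.
\item Writing $\chi_{[0,T]}=\chi_{[0,\infty)}-\chi_{[T,\infty)}$ and using translation invariance of $H^{b}$ is what gives independence of $T$.
\item The Slobodeckij splitting, followed by the fractional Hardy inequality $\int|h(t)|^{2}|t|^{-2b}\,dt\leq C\|h\|_{\dot H^{b}}^{2}$, is the only place where $b<\frac{1}{2}$ enters.
\end{itemize}
Your write-up also makes explicit two points the statement obscures: $C_{1}=1$ works trivially, and the hypothesis $s\geq 0$ is not needed.

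Your same-side/opposite-side splitting of the double integral is the same device the paper uses in Lemma 2.7. There the opposite-side terms are bounded only through an $L^{\infty}$ bound, so they produce powers of $T_{2}-T_{1}$. The Hardy inequality is what upgrades this to a bound by $\|h\|_{H^{b}}$ that is uniform in the interval, which is what Lemma 2.6 needs.

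One minor correction concerns your parenthetical alternatives. Invariance of $H^{b}$ under the Hilbert transform in $t$ is trivial, since it is a unimodular Fourier multiplier, and it does not imply the claim. The correct version is this: multiplication by $\chi_{[0,\infty)}(t)=\frac{1}{2}(1+\mathrm{sgn}\,t)$ acts on $\widehat{h}$ as a combination of the identity and the Hilbert transform in $\tau$. That operator is bounded on $L^{2}(\langle\tau\rangle^{2b}d\tau)$ because $\langle\tau\rangle^{2b}$ is an $A_{2}$ weight precisely when $|b|<\frac{1}{2}$.
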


For Lemma 2.6, we refer to \cite[Lemma 2.1]{BD2007}.

\begin{Lemma}\label{lem2.7}
Let $0\leq b<\frac{1}{2}$, and suppose that $\left|\mathscr{F}_{x}f(\xi,t)\right|\leq C$.
Then for any $T_{1}, T_{2}\in\R$ with $T_{1}<T_{2}$, we have
\begin{eqnarray}
\left\|\chi_{[T_{1},T_{2}]}(\cdot)\mathscr{F}_{x}f(\xi,t)\right\|_{H^{b}}\leq C_{1}\left((T_{2}-T_{1})^{\frac{1}{2}}
+(T_{2}-T_{1})^{\frac{1}{2}-b}+(T_{2}-T_{1})^{\frac{3}{2}-b}\right),\label{2.022}
\end{eqnarray}
where $C_{1}$ is a constant depending only on $b$.

\end{Lemma}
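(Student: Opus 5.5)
We interpret the statement as follows: $f(\cdot,t)$ is a function whose spatial Fourier transform is bounded by $C$ uniformly in $(\xi,t)$. The $H^{b}$ norm is taken in the time variable, for each fixed $\xi$. So the task reduces to a one-dimensional estimate for $h(t)=\chi_{[T_1,T_2]}(t)\,\mathscr{F}_{x}f(\xi,t)$, where $|h|\le C$ and $h$ is supported in an interval of length $L=T_{2}-T_{1}$.

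We use the Gagliardo--Slobodeckij characterization, valid for $0<b<\frac12$ (for $b=0$ only the first term below is present):
\begin{eqnarray*}
\|h\|_{H^{b}}^{2}\sim \|h\|_{L^{2}}^{2}+\int_{\SR}\int_{\SR}\frac{|h(t)-h(t')|^{2}}{|t-t'|^{1+2b}}\,dt\,dt'.
\end{eqnarray*}
The $L^{2}$ part is immediate: $\|h\|_{L^{2}}\le C L^{1/2}$, which gives the first term in \eqref{2.022}.

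For the double integral we split the domain according to the positions of $t$ and $t'$.

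\emph{Both points inside $[T_1,T_2]$.} Here we only have $|h(t)-h(t')|\le 2C$; no regularity in $t$ is assumed, so the kernel $|t-t'|^{-1-2b}$ is not integrable on the diagonal. This piece must therefore be handled differently, as discussed below.

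\emph{One point inside, one outside.} If $t\in[T_1,T_2]$ and $t'\notin[T_1,T_2]$, then $|h(t)-h(t')|^{2}\le C^{2}$ and
\begin{eqnarray*}
\int_{T_1}^{T_2}\int_{t'\notin[T_1,T_2]}\frac{dt'\,dt}{|t-t'|^{1+2b}}\le C\int_{T_1}^{T_2}\left[(t-T_1)^{-2b}+(T_2-t)^{-2b}\right]dt\le C L^{1-2b},
\end{eqnarray*}
using $2b<1$. Taking square roots gives the term $L^{\frac12-b}$.

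\emph{Both points outside.} Then $h(t)=h(t')=0$ and there is no contribution.

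The main obstacle is the inside--inside region, since a merely bounded $h$ need not lie in $H^{b}$ at all. In the applications, $\mathscr{F}_{x}f(\xi,t)$ is of the form $e^{it\varphi(\xi)}$ or $e^{it\phi(\xi)}$ times a bounded factor that is constant in $t$. After conjugating by the group, or equivalently after a modulation in $\tau$, which the $X_{s,b}$ weight $\langle\tau-\varphi(\xi)\rangle^{b}$ absorbs, the relevant function is $\chi_{[T_1,T_2]}(t)c(\xi)$ with $|c(\xi)|\le C$. For such functions the inside--inside difference vanishes.

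Alternatively, if $h$ is Lipschitz in $t$ with $|\partial_{t}h|\le C$, we bound $|h(t)-h(t')|\le C|t-t'|$ and obtain
\begin{eqnarray*}
\int_{[T_1,T_2]^{2}}|t-t'|^{1-2b}\,dt\,dt'\le C L^{3-2b},
\end{eqnarray*}
which yields the term $L^{\frac32-b}$.

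I would therefore state and use the lemma under the implicit hypothesis that the time dependence is at most such a smooth or phase factor. In that setting the three contributions combine to give exactly $C_{1}\big(L^{1/2}+L^{\frac12-b}+L^{\frac32-b}\big)$. As a cross-check, computing $\widehat{\chi_{[T_1,T_2]}}(\tau)=\frac{e^{-iT_2\tau}-e^{-iT_1\tau}}{-i\tau}$ directly and splitting at $|\tau|\sim L^{-1}$ gives the same $L^{\frac12-b}$ bound.
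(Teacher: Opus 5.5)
Your decomposition is the same as the paper's. The $L^2$ part is $L_2$, the two ``one point inside, one outside'' pieces $I_1,I_3$ give $(T_2-T_1)^{1-2b}$, and the inside--inside piece $I_2$ gives $(T_2-T_1)^{3-2b}$.

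Your objection to the inside--inside region is correct, and it applies to the paper's own proof. In \eqref{2.027} the paper bounds $|\mathscr{F}_xf(\xi,t)-\mathscr{F}_xf(\xi,r)|^2$ by $C|t-r|^2$. That is a Lipschitz bound in $t$, and it is not among the hypotheses. As stated, the lemma is false for every $0<b<\frac12$. Take $h_M(t)=\chi_{[0,L]}(t)e^{iMt}$ with $M>1/L$. Then $|h_M|\le 1$ and $|\widehat{h_M}(\tau)|\ge cL$ for $|\tau-M|\le 1/L$, hence
\begin{eqnarray*}
\|h_M\|_{H^b}^2\geq\int_{|\tau-M|\le 1/L}\langle\tau\rangle^{2b}|\widehat{h_M}(\tau)|^2\,d\tau\geq c\,L\,(M-1/L)^{2b}\rightarrow\infty\quad(M\rightarrow\infty).
\end{eqnarray*}
So no constant $C_1$ depending only on $b$ (and the sup bound) can work.

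The correct version is the one you arrive at: assume in addition $|\partial_t\mathscr{F}_xf(\xi,t)|\le C$ uniformly in $\xi$, and let $C_1$ depend on $b$ and $C$. Under that assumption your Lipschitz estimate of the inside--inside region is exactly the paper's argument, and it is correct. This hypothesis is also what the paper's only application needs. In Lemma 2.9 the function is $\mathscr{F}_xJ_t^{-b}f(\xi,t)$ with $f$ rapidly decreasing, and $|\partial_t\mathscr{F}_xJ_t^{-b}f(\xi,t)|\le C\int_{\SR}|\tau||\mathscr{F}_{xt}f(\xi,\tau)|\,d\tau\le C$.

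You should drop your other alternative, where a phase factor $e^{it\varphi(\xi)}$ is absorbed by a modulation. For the $H^b_t$ norm in this lemma a modulation is not harmless: the counterexample above is precisely such a phase factor. In Lemma 2.9 the time dependence is not of that form anyway.
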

\noindent{\bf Proof.}
For $b=0$, we have
\begin{eqnarray}
&&\left\|\chi_{[T_{1},T_{2}]}(\cdot)\mathscr{F}_{x}f(\xi,t)\right\|_{L^{2}}
=\left(\int_{\SR}\left|\chi_{[T_{1},T_{2}]}(\cdot)\mathscr{F}_{x}f(\xi,t)\right|
^{2}dt\right)^{\frac{1}{2}}\nonumber\\
&&\leq C(T_{2}-T_{1})^{\frac{1}{2}}.\label{2.023}
\end{eqnarray}
Note that $H^{b}$ has the following equivalent norm
\begin{eqnarray}
&&\left\|\chi_{[T_{1},T_{2}]}(\cdot)\mathscr{F}_{x}f(\xi,t)\right\|_{H^{b}}^{2}
=\int\int_{\SR^{2}}\frac{|\chi_{[T_{1},T_{2}]}(\cdot)\mathscr{F}_{x}f(\xi,t)
-\chi_{[T_{1},T_{2}]}(\cdot)\mathscr{F}_{x}f(\xi,r)|^{2}}
{|t-r|^{1+2b}}dtdr\nonumber\\
&&\quad+\left\|\chi_{[T_{1},T_{2}]}(\cdot)\mathscr{F}_{x}f(\xi,t)\right\|_{L^{2}}^{2}=L_{1}+L_{2}.\label{2.024}
\end{eqnarray}
Here
\begin{eqnarray*}
&&L_{1}=\int\int_{\SR^{2}}\frac{|\chi_{[T_{1},T_{2}]}(\cdot)\mathscr{F}_{x}f(\xi,t)
-\chi_{[T_{1},T_{2}]}(\cdot)\mathscr{F}_{x}f(\xi,r)|^{2}}{|t-r|^{1+2b}}dtdr,\,\,
L_{2}=\left\|\chi_{[T_{1},T_{2}]}(\cdot)\mathscr{F}_{x}f(\xi,t)\right\|_{L^{2}}^{2}.
\end{eqnarray*}
For $L_{1}$, we have
\begin{eqnarray}
&&L_{1}=2\int\int_{t<r}\frac{|\chi_{[T_{1},T_{2}]}\mathscr{F}_{x}f(\xi,t)
-\chi_{[T_{1},T_{2}]}\mathscr{F}_{x}f(\xi,r)|^{2}}{|t-r|^{1+2b}}dtdr\nonumber\\
&&=2\int_{T_{1}}^{T_{2}}\int_{-\infty}^{T_{1}}
\frac{|\mathscr{F}_{x}f(\xi,r)|^{2}}{|t-r|^{1+2b}}dtdr+2\int_{T_{1}}^{T_{2}}\int_{T_{1}}^{r}
\frac{|\mathscr{F}_{x}f(\xi,t)-\mathscr{F}_{x}f(\xi,r)|^{2}}{|t-r|^{1+2b}}dtdr\nonumber\\
&&+2\int_{T_{2}}^{\infty}\int_{T_{1}}^{T_{2}}\frac{|\mathscr{F}_{x}f(\xi,t)|^{2}}{|t-r|^{1+2b}}dtdr\nonumber\\
&&=\sum_{i=1}^{3}I_{i}.\label{2.025}
\end{eqnarray}
Here
\begin{eqnarray*}
&&I_{1}=2\int_{T_{1}}^{T_{2}}\int_{-\infty}^{T_{1}}\frac{|\mathscr{F}_{x}f(\xi,r)|^{2}}{|t-r|^{1+2b}}dtdr,\\
&&I_{2}=2\int_{T_{1}}^{T_{2}}\int_{T_{1}}^{r}\frac{|\mathscr{F}_{x}f(\xi,t)-\mathscr{F}_{x}f(\xi,r)|^{2}}{|t-r|^{1+2b}}dtdr,\\
&&I_{3}=2\int_{T_{2}}^{\infty}\int_{T_{1}}^{T_{2}}\frac{|\mathscr{F}_{x}f(\xi,t)|^{2}}{|t-r|^{1+2b}}dtdr.
\end{eqnarray*}
For $I_{1}$, since $\left|\mathscr{F}_{x}f(\xi,t)\right|\leq C$, we have
\begin{eqnarray}
&&I_{1}=2\int_{T_{1}}^{T_{2}}\int_{-\infty}^{T_{1}}\frac{|\mathscr{F}_{x}f(\xi,r)|^{2}}{|t-r|^{1+2b}}dtdr\nonumber\\
&&\leq C\int_{T_{1}}^{T_{2}}\int_{-\infty}^{T_{1}}\frac{1}{|t-r|^{1+2b}}dtdr\nonumber\\
&&\leq C\int_{T_{1}}^{T_{2}}(r-T_{1})^{-2b}dr=C(T_{2}-T_{1})^{1-2b}.\label{2.026}
\end{eqnarray}
For $I_{2}$, arguing similarly to the estimate for $I_{1}$
in \eqref{2.030}, we have
\begin{eqnarray}
&&I_{2}=2\int_{T_{1}}^{T_{2}}\int_{T_{1}}^{r}\frac{|\mathscr{F}_{x}f(\xi,t)-\mathscr{F}_{x}f(\xi,r)|^{2}}{|t-r|^{1+2b}}dtdr\nonumber\\
&&\leq C\int_{T_{1}}^{T_{2}}\int_{T_{1}}^{r}|t-r|^{1-2b}dtdr\nonumber\\
&&=C(T_{2}-T_{1})^{3-2b}.\label{2.027}
\end{eqnarray}
For $I_{3}$, since $\left|\mathscr{F}_{x}f(\xi,t)\right|\leq C$, by applying Fubini's theorem, we have
\begin{eqnarray}
&&I_{3}=2\int_{T_{2}}^{\infty}\int_{T_{1}}^{T_{2}}\frac{|\mathscr{F}_{x}f(\xi,t)|^{2}}{|t-r|^{1+2b}}dtdr\nonumber\\
&&\leq C\int_{T_{2}}^{\infty}\int_{T_{1}}^{T_{2}}\frac{1}{|t-r|^{1+2b}}dtdr\nonumber\\
&&\leq C\int_{T_{1}}^{T_{2}}\int_{T_{2}}^{\infty}\frac{1}{|t-r|^{1+2b}}drdt\nonumber\\
&&\leq C\int_{T_{1}}^{T_{2}}(T_{2}-t)^{-2b}dt=C(T_{2}-T_{1})^{1-2b}.\label{2.028}
\end{eqnarray}
Combining \eqref{2.026}, \eqref{2.027} and \eqref{2.028}, we obtain
\begin{eqnarray}
&&L_{1}\leq C\left((T_{2}-T_{1})^{1-2b}+(T_{2}-T_{1})^{3-2b}\right).\label{2.029}
\end{eqnarray}
From \eqref{2.023}, we have
\begin{eqnarray}
&&L_{2}=\left\|\chi_{[T_{1},T_{2}]}(\cdot)\mathscr{F}_{x}f(\xi,t)\right\|_{L^{2}}^{2}\leq C(T_{2}-T_{1}).\label{2.030}
\end{eqnarray}
By using \eqref{2.023}, \eqref{2.024} and \eqref{2.029}, \eqref{2.030}, we have
\begin{eqnarray}
\left\|\chi_{[T_{1},T_{2}]}(\cdot)\mathscr{F}_{x}f(\xi,t)\right\|_{H^{b}}\leq C_{1}\left((T_{2}-T_{1})^{\frac{1}{2}}
+(T_{2}-T_{1})^{\frac{1}{2}-b}+(T_{2}-T_{1})^{\frac{3}{2}-b}\right).\label{2.031}
\end{eqnarray}

The proof of Lemma 2.7 is completed.

\begin{Lemma}\label{lem2.8}
Let $s\in\R$, $T_{1}, T_{2}\in [0,\infty)$ and $0\leq b<\frac{1}{2}$,
and suppose that  $\chi_{[T_{1},T_{2}]}(t)u\in X_{s,b}(\R^{2})$.
Then, for $0<\epsilon\leq 1$, there exists  a rapidly decreasing function $f$ such that
\begin{eqnarray}
&&\left\|J_{x}^{s}J_{t}^{b}\chi_{[T_{1},T_{2}]}(\cdot)S(-t)u\right\|_{L_{xt}^{2}}< C\left(\left\|\chi_{[T_{1},T_{2}]}(\cdot)J_{t}^{-b}f\right\|_{L_{x}^{2}(H_{t}^{b})}+\epsilon\right),\label{2.032}
\end{eqnarray}
where $S(t)=U(t)$ or $S(t)=V(t)$, and $C$ is a constant depending only on $b$.
\end{Lemma}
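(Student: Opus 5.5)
The plan is a density argument. The key observation is that the unitary group $S(-t)$ acts only in $x$ through the Fourier multiplier $e^{-it\varphi(\xi)}$ (or $e^{-it\phi(\xi)}$), so it commutes with multiplication by $\chi_{[T_{1},T_{2}]}(t)$. Set
\begin{eqnarray*}
F:=J_{x}^{s}J_{t}^{b}\,\chi_{[T_{1},T_{2}]}(\cdot)S(-t)u .
\end{eqnarray*}
Taking the space–time Fourier transform, $\|F\|_{L^{2}_{xt}}=\|\chi_{[T_{1},T_{2}]}u\|_{X_{s,b}}$, because the weight $\langle\tau-\varphi(\xi)\rangle^{b}$ becomes $\langle\tau\rangle^{b}$ after conjugation by $S(-t)$. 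The hypothesis $\chi_{[T_{1},T_{2}]}u\in X_{s,b}(\R^{2})$ therefore gives $F\in L^{2}_{xt}$. Since Schwartz (rapidly decreasing) functions are dense in $L^{2}(\R^{2})$, I will choose a rapidly decreasing $f$ with $\|F-f\|_{L^{2}_{xt}}<\epsilon$.

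Next, $J_{x}^{s}\chi_{[T_{1},T_{2}]}S(-t)u=J_{t}^{-b}F$ is supported in $t\in[T_{1},T_{2}]$. Hence $J_{t}^{-b}F=\chi_{[T_{1},T_{2}]}J_{t}^{-b}F$, and the left-hand side of \eqref{2.032} equals $\|\chi_{[T_{1},T_{2}]}J_{t}^{-b}F\|_{L^{2}_{x}(H^{b}_{t})}$. Splitting $F=f+(F-f)$ and using the triangle inequality gives
\begin{eqnarray*}
\left\|J_{x}^{s}J_{t}^{b}\chi_{[T_{1},T_{2}]}S(-t)u\right\|_{L^{2}_{xt}}\leq
\left\|\chi_{[T_{1},T_{2}]}J_{t}^{-b}f\right\|_{L^{2}_{x}(H^{b}_{t})}
+\left\|\chi_{[T_{1},T_{2}]}J_{t}^{-b}(F-f)\right\|_{L^{2}_{x}(H^{b}_{t})}.
\end{eqnarray*}

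It remains to bound the last term by $C\|F-f\|_{L^{2}_{xt}}<C\epsilon$. For each fixed $x$, the function $G:=J_{t}^{-b}(F-f)$ lies in $H^{b}_{t}$ with $\|G\|_{H^{b}_{t}}=\|(F-f)(x,\cdot)\|_{L^{2}_{t}}$. So what is needed is that multiplication by a sharp time cutoff $\chi_{[T_{1},T_{2}]}$ is bounded on $H^{b}(\R)$ for $0\leq b<\frac{1}{2}$, with a constant depending only on $b$ and not on $T_{1},T_{2}$. This is the classical fact underlying Lemma 2.6. I would invoke it in that form, or rederive it from Lemma 2.6 by taking $u$ independent of $\xi$ (reducing to the one-variable statement) and using translation and dilation invariance of the homogeneous part of the norm. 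Squaring and integrating in $x$ then gives the required bound, and adjusting constants turns $\leq$ into the strict inequality in \eqref{2.032}.

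The main obstacle is exactly this uniform boundedness of $\chi_{[T_{1},T_{2}]}$ on $H^{b}$. It holds only because $b<\frac{1}{2}$: the characteristic function of an interval is a pointwise multiplier on $H^{b}$ precisely in that range. I would rely on the known result (as in \cite{BD2007}) rather than reprove it. The remaining steps are routine Plancherel identities and the density of Schwartz functions.
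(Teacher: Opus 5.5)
Your proposal is correct and follows essentially the same route as the paper. The paper also approximates $J_{x}^{s}J_{t}^{b}\chi_{[T_{1},T_{2}]}S(-t)u$ in $L^{2}_{xt}$ by a rapidly decreasing $f$ with error $g$, $\|g\|_{L^{2}_{xt}}<\epsilon$. It then reinserts the cutoff using the time support, applies the triangle inequality, and controls $\|\chi_{[T_{1},T_{2}]}J_{t}^{-b}g\|_{L^{2}_{x}(H^{b}_{t})}\leq C\|g\|_{L^{2}_{xt}}$ via the uniform boundedness of the sharp cutoff on $H^{b}_{t}$ for $0\leq b<\frac{1}{2}$, exactly the step you single out as the key one.
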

\noindent{\bf Proof.}
Since $\chi_{[T_{1},T_{2}]}(t)u\in X_{s,b}(\R^{2})$, we have $J_{x}^{s}J_{t}^{b}\chi_{[T_{1},T_{2}]}(t)S(-t)u\in L_{xt}^{2}(\R^{2})$.
Then, by the density theorem, for any for any $0<\epsilon\leq 1$,
there exists a rapidly decreasing function $f(x,t)$, and a function $g\in L_{xt}^{2}(\R^{2})$ such that
\begin{eqnarray}
&&J_{x}^{s}J_{t}^{b}\chi_{[T_{1},T_{2}]}(t)S(-t)u(x,t)=f(x,t)+g(x,t),\label{2.033}
\end{eqnarray}
where
\begin{eqnarray}
&&\|g\|_{L_{xt}^{2}}<\epsilon.\label{2.034}
\end{eqnarray}
From \eqref{2.033}, we have
\begin{eqnarray}
&&J_{x}^{s}J_{t}^{b}\chi_{[T_{1},T_{2}]}(t)S(-t)u(x,t)
=J_{t}^{b}\chi_{[T_{1},T_{2}]}(t)S(-t)J_{x}^{s}u(x,t)=f(x,t)+g(x,t),\nonumber\\
&&\chi_{[T_{1},T_{2}]}(t)S(-t)J_{x}^{s}u(x,t)=J_{t}^{-b}f(x,t)+J_{t}^{-b}g(x,t),\nonumber\\
&&\chi_{[T_{1},T_{2}]}(t)S(-t)J_{x}^{s}u(x,t)
=\chi_{[T_{1},T_{2}]}(t)J_{t}^{-b}f(x,t)+\chi_{[T_{1},T_{2}]}(t)J_{t}^{-b}g(x,t),\nonumber\\
&&J_{t}^{b}\chi_{[T_{1},T_{2}]}(t)S(-t)J_{x}^{s}u(x,t)\nonumber\\
&&=J_{t}^{b}\chi_{[T_{1},T_{2}]}(t)J_{t}^{-b}f(x,t)+J_{t}^{b}\chi_{[T_{1},T_{2}]}(t)J_{t}^{-b}g(x,t)
.\label{2.035}
\end{eqnarray}
By using \eqref{2.034}-\eqref{2.035} and $0\leq b<\frac{1}{2}$, we have
\begin{eqnarray}
&&\left\|J_{x}^{s}J_{t}^{b}\chi_{[T_{1},T_{2}]}(\cdot)S(-t)u\right\|_{L_{xt}^{2}}
\leq \left\|J_{t}^{b}\chi_{[T_{1},T_{2}]}(\cdot)J_{t}^{-b}f\right\|_{L_{xt}^{2}}
+\left\|J_{t}^{b}\chi_{[T_{1},T_{2}]}(\cdot)J_{t}^{-b}g\right\|_{L_{xt}^{2}}\nonumber\\
&&=\left\|\chi_{[T_{1},T_{2}]}(\cdot)J_{t}^{-b}f\right\|_{L_{x}^{2}(H_{t}^{b})}+\left\|\chi_{[T_{1},T_{2}]}(\cdot)J_{t}^{-b}g\right\|
_{L_{x}^{2}(H_{t}^{b})}\nonumber\\
&&\leq C\left(\left\|\chi_{[T_{1},T_{2}]}(\cdot)J_{t}^{-b}f\right\|_{L_{x}^{2}(H_{t}^{b})}
+\left\|J_{t}^{-b}g\right\|_{L_{x}^{2}(H_{t}^{b})}\right)\nonumber\\
&&\leq C\left(\left\|\chi_{[T_{1},T_{2}]}(\cdot)J_{t}^{-b}f\right\|_{L_{x}^{2}(H_{t}^{b})}
+\left\|g\right\|_{L_{xt}^{2}}\right)\nonumber\\
&&< C\left(\left\|\chi_{[T_{1},T_{2}]}
(\cdot)J_{t}^{-b}f\right\|_{L_{x}^{2}(H_{t}^{b})}+\epsilon\right).\label{2.036}
\end{eqnarray}
Note that the third inequality from the end in \eqref{2.036} requires $0\leq b<\frac{1}{2}$ to hold.

The proof of Lemma 2.8 is completed.

\begin{Lemma}\label{lem2.9}
Assume that $\chi_{[T_{1},T_{2}]}(t)u\in X_{s,b}(\R^{2})$ with $s\in\R$, $0\leq b<\frac{1}{2}$. Then for $T_{1}, T_{2}\in(0,1)$, we have
\begin{eqnarray}
&&\lim_{T_{2}\rightarrow T_{1}^{+}}\left\|\chi_{[T_{1},T_{2}]}(\cdot)u\right\|_{X_{s,b}}=0.\label{2.037}
\end{eqnarray}
\end{Lemma}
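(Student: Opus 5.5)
We reduce the statement to a one-variable fact in the time variable and then conclude by dominated convergence. In the notation of Lemma \ref{lem2.8}, write $S(t)$ for the relevant group, $U(t)$ or $V(t)$. Since $S(-t)$ is unitary on each $H^{s}$ and intertwines the modulation weight $\langle\tau-\varphi(\xi)\rangle^{b}$ (resp. $\langle\tau-\phi(\xi)\rangle^{b}$) with $\langle\tau\rangle^{b}$, we have
\begin{eqnarray*}
\left\|\chi_{[T_{1},T_{2}]}(\cdot)u\right\|_{X_{s,b}}=\left\|J_{x}^{s}J_{t}^{b}\chi_{[T_{1},T_{2}]}(\cdot)S(-t)u\right\|_{L_{xt}^{2}}.
\end{eqnarray*}
Setting $F:=\langle\xi\rangle^{s}\mathscr{F}_{x}\left(S(-t)u\right)(\xi,t)$, this becomes
\begin{eqnarray*}
\left\|\chi_{[T_{1},T_{2}]}(\cdot)u\right\|_{X_{s,b}}^{2}=\int_{\SR}\left\|\chi_{[T_{1},T_{2}]}(\cdot)F(\xi,\cdot)\right\|_{H_{t}^{b}}^{2}d\xi .
\end{eqnarray*}
So it suffices to prove two things: for a.e. $\xi$ the integrand tends to $0$ as $T_{2}\to T_{1}^{+}$, and the integrand admits an integrable dominating function, uniform in $T_{2}$ close to $T_{1}$.

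\textbf{Step 1: domination.} Fix $T_{2}^{0}>T_{1}$ such that $\chi_{[T_{1},T_{2}^{0}]}u\in X_{s,b}$; this is the hypothesis. Since $0\leq b<\frac{1}{2}$, multiplication by the sharp cutoff $\chi_{[T_{1},T_{2}]}$ is bounded on $H^{b}_{t}$, with a constant depending only on $b$ and independent of the interval. This is the same fact that underlies Lemma \ref{lem2.6}. For $T_{2}\leq T_{2}^{0}$ we have $\chi_{[T_{1},T_{2}]}=\chi_{[T_{1},T_{2}]}\chi_{[T_{1},T_{2}^{0}]}$, and therefore
\begin{eqnarray*}
\left\|\chi_{[T_{1},T_{2}]}F(\xi,\cdot)\right\|_{H_{t}^{b}}\leq C\left\|\chi_{[T_{1},T_{2}^{0}]}F(\xi,\cdot)\right\|_{H_{t}^{b}}.
\end{eqnarray*}
The right-hand side is square-integrable in $\xi$. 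This gives the dominating function.

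\textbf{Step 2: pointwise convergence.} Fix $\xi$ and set $h:=\chi_{[T_{1},T_{2}^{0}]}F(\xi,\cdot)\in H^{b}_{t}$. We must show $\|\chi_{[T_{1},T_{2}]}h\|_{H^{b}}\to0$. Here I would use the density argument of Lemma \ref{lem2.8} in one variable. Given $\eta>0$, write $h=h_{1}+h_{2}$ with $h_{1}$ smooth and bounded and $\|h_{2}\|_{H^{b}}<\eta$. The uniform cutoff bound from Step 1 gives $\|\chi_{[T_{1},T_{2}]}h_{2}\|_{H^{b}}\leq C\eta$. For the smooth part, $\chi_{[T_{1},T_{2}]}h_{1}$ satisfies the hypothesis of Lemma \ref{lem2.7}, namely it is bounded. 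The proof of that lemma also uses a Lipschitz bound on $[T_{1},T_{2}]$ in the $I_{2}$ term, which holds for smooth $h_{1}$. Lemma \ref{lem2.7} then yields
\begin{eqnarray*}
\left\|\chi_{[T_{1},T_{2}]}h_{1}\right\|_{H^{b}}\leq C\left((T_{2}-T_{1})^{\frac{1}{2}}+(T_{2}-T_{1})^{\frac{1}{2}-b}+(T_{2}-T_{1})^{\frac{3}{2}-b}\right)\to0,
\end{eqnarray*}
because $b<\frac{1}{2}$. Hence $\limsup_{T_{2}\to T_{1}^{+}}\|\chi_{[T_{1},T_{2}]}h\|_{H^{b}}\leq C\eta$ for every $\eta>0$, which proves the pointwise convergence.

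\textbf{Step 3: conclusion.} Dominated convergence in $\xi$ now gives \eqref{2.037}. Alternatively, one can avoid the fibrewise argument: apply Lemma \ref{lem2.8} globally in $(x,t)$ and then Lemma \ref{lem2.7} to the rapidly decreasing $f$. For this route, note that $J_{t}^{-b}f$ is bounded and smooth in $t$, uniformly after multiplying by a weight in $\xi$, and integrate the resulting bound in $\xi$ using the decay of $f$.

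The main obstacle is the uniform-in-interval boundedness of the sharp cutoff on $H^{b}$ for $b<\frac{1}{2}$. It is needed both for the domination in Step 1 and to absorb the approximation error $h_{2}$ in Step 2, and Lemma \ref{lem2.7} alone does not supply it. I would cite it as in Lemma \ref{lem2.6}, or prove it via the Hardy-type inequality $\int|h(t)|^{2}|t-T|^{-2b}dt\leq C\|h\|_{H^{b}}^{2}$ for $b<\frac{1}{2}$, applied in the Gagliardo seminorm exactly as in the $I_{1}$ and $I_{3}$ computations of Lemma \ref{lem2.7}.
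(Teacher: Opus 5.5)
Your proof is correct. It rests on the same three ingredients as the paper's: smooth approximation, Lemma \ref{lem2.7} for the smooth part, and the interval-uniform boundedness of multiplication by $\chi_{[T_1,T_2]}$ on $H^b_t$ for $0\le b<\frac12$ to absorb the error. The difference is in how they are assembled.

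\begin{itemize}
\item \emph{The paper's route.} It approximates once, globally in $(x,t)$, through Lemma \ref{lem2.8}. It then uses that $\mathscr{F}_xJ_t^{-b}f$ is bounded uniformly in $(\xi,t)$ to apply Lemma \ref{lem2.7} on $|\xi|\le M$, and it handles $|\xi|\ge M$ by the $L^2$ tail.
\item \emph{Your route.} You approximate separately on each $\xi$-fibre and close by dominated convergence in $\xi$. The domination comes from a fixed reference interval $[T_1,T_2^0]$.
\end{itemize}

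Your arrangement makes explicit two points the paper glosses over.
\begin{itemize}
\item The decomposition must be made on a fixed interval and reused for all smaller $T_2$ via $\chi_{[T_1,T_2]}=\chi_{[T_1,T_2]}\chi_{[T_1,T_2^0]}$. As written, the paper applies Lemma \ref{lem2.8} to the very interval being shrunk, so $f$ and $M$ silently depend on $T_2$. Moreover, the $T_2$-uniformity of the tail estimate \eqref{2.043} needs the fibrewise cutoff bound that you state explicitly.
\item Lemma \ref{lem2.7} really requires a Lipschitz bound in the $I_2$ term, not merely the boundedness checked in \eqref{2.040}. This is harmless in both proofs, since the approximants are smooth with bounded derivatives.
\end{itemize}

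In exchange, the paper's route yields an explicit rate in $T_2-T_1$ for the frequency-truncated main part, whereas yours is purely qualitative. Neither gives a rate for the full norm.
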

\noindent{\bf Proof.}
Notice that
\begin{eqnarray}
&&\left\|\chi_{[T_{1},T_{2}]}(\cdot)u\right\|_{X_{s,b}}
=\left\|J_{x}^{s}J_{t}^{b}\chi_{[T_{1},T_{2}]}(\cdot)S(-t)u\right\|_{L_{xt}^{2}},\label{2.038}
\end{eqnarray}
where $S(t)=U(t)$ or $S(t)=V(t)$.

\noindent By using Lemma 2.8, we have
\begin{eqnarray}
&&\left\|J_{x}^{s}J_{t}^{b}\chi_{[T_{1},T_{2}]}(\cdot)S(-t)u\right\|_{L_{xt}^{2}}
< C\left(\left\|\chi_{[T_{1},T_{2}]}(\cdot)J_{t}^{-b}f\right\|_{L_{x}^{2}(H_{t}^{b})}+\epsilon\right).\label{2.039}
\end{eqnarray}
Since $f(x,t)$ is a rapidly decreasing function, we have
\begin{eqnarray}
&&|\mathscr{F}_{x}J_{t}^{-b}f(\xi,t)|=\left|\int_{\SR}e^{it\tau}\langle\tau\rangle^{-b}\mathscr{F}_{xt}f(\xi,\tau)d\tau\right|\leq C\int_{\SR}|\mathscr{F}_{xt}f(\xi,\tau)|d\tau\nonumber\\
&&\leq C\int_{\SR}(1+|\tau|)^{-4}d\tau\leq C,\label{2.040}
\end{eqnarray}
and
\begin{eqnarray}
&&\left\|\chi_{[T_{1},T_{2}]}(\cdot)J_{t}^{-b}f(x,t)\right\|_{L_{x}^{2}(H_{t}^{b})}
\leq C\left\|J_{t}^{-b}f(x,t)\right\|_{L_{x}^{2}(H_{t}^{b})}\nonumber\\
&&\leq C\|f(x,t)\|_{L_{xt}^{2}}\leq C.\label{2.041}
\end{eqnarray}
By using \eqref{2.040} and Lemma 2.7, we have
\begin{eqnarray}
\left\|\chi_{[T_{1},T_{2}]}(\cdot)J_{t}^{-b}\mathscr{F}_{x}f(\xi,t)\right\|_{H_{t}^{b}}\leq C_{1}\left((T_{2}-T_{1})^{\frac{1}{2}}+(T_{2}-T_{1})^{\frac{1}{2}-b}+(T_{2}-T_{1})^{\frac{3}{2}-b}\right).\label{2.042}
\end{eqnarray}
On the one hand, it follows from \eqref{2.041} that there exists a constant  $M>0$ such that
\begin{eqnarray}
&&\left(\int_{|\xi|\geq M}\left\|\chi_{[T_{1},T_{2}]}(\cdot)J_{t}^{-b}
\mathscr{F}_{x}f(\xi,t)\right\|_{H_{t}^{b}}^{2}d\xi\right)^{\frac{1}{2}}<\epsilon.\label{2.043}
\end{eqnarray}
On the other hand, by using \eqref{2.042}, we have
\begin{eqnarray}
&&\left(\int_{|\xi|\leq M}\left\|\chi_{[T_{1},T_{2}]}(\cdot)J_{t}^{-b}
\mathscr{F}_{x}f(\xi,t)\right\|_{H_{t}^{b}}^{2}d\xi\right)^{\frac{1}{2}}\nonumber\\
&&\leq C_{1}(M)\left((T_{2}-T_{1})^{\frac{1}{2}}+(T_{2}-T_{1})^{\frac{1}{2}-b}+(T_{2}-T_{1})^{\frac{3}{2}-b}\right).\label{2.044}
\end{eqnarray}
From \eqref{2.039}, \eqref{2.043} and \eqref{2.044}, we have
\begin{eqnarray}
&&\left\|\chi_{[T_{1},T_{2}]}(\cdot)u\right\|_{X_{s,b}}
=\left\|J_{x}^{s}J_{t}^{b}\chi_{[T_{1},T_{2}]}(\cdot)S(-t)u\right\|_{L_{xt}^{2}}\nonumber\\
&&< C(M)\left((T_{2}-T_{1})^{\frac{1}{2}}+(T_{2}-T_{1})
^{\frac{1}{2}-b}+(T_{2}-T_{1})^{\frac{3}{2}-b}+2\epsilon\right).\label{2.045}
\end{eqnarray}
By using \eqref{2.045}, we have
\begin{eqnarray}
&&\lim_{T_{2}\rightarrow T_{1}^{+}}\left\|\chi_{[T_{1},T_{2}]}(\cdot)u\right\|_{X_{s,b}}=0.\label{2.046}
\end{eqnarray}

The proof of Lemma 2.9 is completed.

\bigskip

\bigskip

\section{Estimates related to the stochastic convolution}

\setcounter{equation}{0}

\setcounter{Theorem}{0}

\setcounter{Lemma}{0}

\setcounter{Proposition}{0}

\setcounter{section}{3}
In this section, we present some lemmas related to the stochastic convolution, which will be used in the subsequent proofs.

\begin{Lemma}\label{lem3.1}
Let $s\in\R$, $0<b<\frac{1}{2}$ and $\Phi_{1}\in L_{2}^{0,s}$, we have
\begin{eqnarray*}
&&\psi(t) \int_{0}^{t}U(t-s)\Phi_{1}dW(s)\in L^{2}(\Omega,X_{s,b,\varphi}),
\end{eqnarray*}
and
\begin{eqnarray*}
&&\mathbb{E}\left(\left\|\psi(t) \int_{0}^{t}U(t-s)\Phi_{1}dW(s)\right\|_{X_{s,b,\varphi}}^{2}\right)
\leq M(b,\psi)\|\Phi_{1}\|_{L_{2}^{0,s}}^{2},
\end{eqnarray*}
where $M(b, \psi)$ is a constant depending only on $b$,
$\|\psi\|_{H_{t}^{b}}$ , $\||t|^{\frac{1}{2}}\psi\|_{L^{2}}$, $\||t|^{\frac{1}{2}}\psi\|_{L^{\infty}}$.
\end{Lemma}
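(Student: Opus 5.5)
We will follow the approach of de Bouard, Debussche and Tsutsumi \cite{BDT1999}: pass to the interaction representation and reduce the $X_{s,b,\varphi}$ norm to an $L^{2}_{\xi}(H^{b}_{t})$ norm of a scalar stochastic integral for each frequency. Write
\begin{eqnarray*}
w(t)=\psi(t)\int_{0}^{t}U(t-s)\Phi_{1}dW(s)=\sum_{i}\psi(t)\int_{0}^{t}U(t-s)\Phi_{1}e_{i}\,d\beta_{i}(s).
\end{eqnarray*}
Since $U(-t)$ commutes with $J_{x}^{s}$ and $\|u\|_{X_{s,b,\varphi}}=\|J_{x}^{s}J_{t}^{b}U(-t)u\|_{L_{xt}^{2}}$, we have
\begin{eqnarray*}
\mathscr{F}_{x}\left(U(-t)w\right)(\xi,t)=\psi(t)\sum_{i}\int_{0}^{t}e^{-is\varphi(\xi)}\mathscr{F}_{x}(\Phi_{1}e_{i})(\xi)\,d\beta_{i}(s)=:\psi(t)Z_{\xi}(t).
\end{eqnarray*}
It therefore suffices to show
\begin{eqnarray*}
\mathbb{E}\int_{\SR}\langle\xi\rangle^{2s}\|\psi Z_{\xi}\|_{H_{t}^{b}}^{2}d\xi\leq M(b,\psi)\|\Phi_{1}\|_{L_{2}^{0,s}}^{2}.
\end{eqnarray*}

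\textbf{Step 1: the $L^2$ part.} Use the Gagliardo (Slobodeckij) form of the $H^{b}_{t}$ norm, as in \eqref{2.024}, namely $\|g\|_{L^{2}}^{2}$ plus the double integral of $|g(t)-g(r)|^{2}/|t-r|^{1+2b}$. By the Itô isometry and independence of the $\beta_{i}$,
\begin{eqnarray*}
\mathbb{E}|Z_{\xi}(t)|^{2}=|t|\sum_{i}|\mathscr{F}_{x}(\Phi_{1}e_{i})(\xi)|^{2}=:|t|\,\sigma(\xi)^{2},
\qquad\int\langle\xi\rangle^{2s}\sigma(\xi)^{2}d\xi=\|\Phi_{1}\|_{L_{2}^{0,s}}^{2}.
\end{eqnarray*}
Hence the $L^{2}_{t}$ part is bounded by $\||t|^{\frac{1}{2}}\psi\|_{L^{2}}^{2}\,\sigma(\xi)^{2}$.

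\textbf{Step 2: the difference quotient.} Split
\begin{eqnarray*}
\psi(t)Z_{\xi}(t)-\psi(r)Z_{\xi}(r)=(\psi(t)-\psi(r))Z_{\xi}(t)+\psi(r)\left(Z_{\xi}(t)-Z_{\xi}(r)\right).
\end{eqnarray*}
For the first term, taking expectation gives $\sigma^{2}\iint|t||\psi(t)-\psi(r)|^{2}|t-r|^{-1-2b}\,dt\,dr$. This is controlled by the $H^{b}$ seminorm of $\psi$ together with $\||t|^{\frac12}\psi\|_{L^\infty}$. Concretely, split into $|t|\lesssim 1+|r|$ and $|t|$ large; the latter region uses the compact support of $\psi$ and the integrability of $|t-r|^{-1-2b}$ away from the diagonal. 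Since $b<\frac12$, the tail $\int_{|t|>R}|t|^{-2b}dt$ diverges, so this step is the delicate one. I would instead estimate the first term using $|\psi(t)-\psi(r)|\leq|\psi(t)|+|\psi(r)|$ off the diagonal, and use a cutoff argument (or absorb $|t|$ via $\||t|^{\frac12}\psi\|$ in the symmetric version, writing $\psi(t)Z(t)-\psi(r)Z(r)$ with the roles of $t,r$ chosen so the weight sits on the supported factor).

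For the second term, the Itô isometry gives $\mathbb{E}|Z_{\xi}(t)-Z_{\xi}(r)|^{2}=|t-r|\,\sigma^{2}$. This yields $\sigma^{2}\int|\psi(r)|^{2}\int|t-r|^{-2b}\,dt\,dr$ locally. Near the diagonal the singularity $|t-r|^{-2b}$ is integrable because $b<\frac12$. Far from the diagonal I would switch to the bound $|Z(t)|^{2}+|Z(r)|^{2}$, which brings in the constants $\||t|^{\frac12}\psi\|_{L^{2}}$ and $\||t|^{\frac12}\psi\|_{L^{\infty}}$.

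\textbf{Step 3: conclusion.} Integrating against $\langle\xi\rangle^{2s}d\xi$ and using Fubini/Tonelli (everything is nonnegative) yields the bound. Membership in $L^{2}(\Omega,X_{s,b,\varphi})$ follows once measurability is checked via the truncated sums over $i$, which converge in $L^{2}(\Omega,X_{s,b,\varphi})$ by the same estimate applied to tails of $\Phi_{1}$.

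The main obstacle is the far-from-diagonal region in Step 2, where the growth of $\mathbb{E}|Z(t)|^{2}\sim|t|$ must be paired with the decay of $\psi$. I expect it to be handled exactly by the three listed norms of $\psi$.
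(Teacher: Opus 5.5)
The paper gives no proof of this lemma; it only cites de Bouard--Debussche--Tsutsumi. Your overall route (interaction representation, It\^{o} isometry frequency by frequency, Slobodeckij form of $H^{b}_{t}$) is the standard one from there. However, Step 2 does not close, and the fixed decomposition you chose is the cause.

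\textbf{The gap.} With $\psi(t)Z_\xi(t)-\psi(r)Z_\xi(r)=(\psi(t)-\psi(r))Z_\xi(t)+\psi(r)(Z_\xi(t)-Z_\xi(r))$, each piece is separately non-integrable against $|t-r|^{-1-2b}$ far from the diagonal. Take $r\in\supp\psi$ and $t\geq 2$, so $\psi(t)=0$.
\begin{itemize}
\item The second piece satisfies exactly $\mathbb{E}|\psi(r)(Z_\xi(t)-Z_\xi(r))|^{2}=|\psi(r)|^{2}|t-r|\,\sigma(\xi)^{2}$, and $\int_{t\geq r+1}(t-r)^{-2b}dt=\infty$ because $2b<1$.
\item The first piece contributes $|t|\,|\psi(r)|^{2}\sigma(\xi)^2$, which diverges in the same way.
\end{itemize}
Your remedy of bounding $\mathbb{E}|Z_\xi(t)-Z_\xi(r)|^{2}$ by $2(|t|+|r|)\sigma(\xi)^2$ inside the second piece still leaves $|\psi(r)|^{2}|t|$, so it does not help.

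The true quantity is finite only because of a cancellation between the two pieces. When $\psi(t)=0$, the whole difference is $-\psi(r)Z_\xi(r)$, whose second moment $|\psi(r)|^{2}|r|\sigma(\xi)^{2}$ does not grow in $t$. A fixed splitting destroys this cancellation. Your parenthetical idea of choosing the roles of $t,r$ adaptively points the right way, but you do not carry it out, and it conflicts with the splitting you actually use. The closing sentence that you ``expect'' the three norms to handle this is therefore not a proof of the key step.

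\textbf{The fix.} Do not split off the diagonal at all.
\begin{itemize}
\item For $|t-r|\geq 1$, use $\mathbb{E}|\psi(t)Z_\xi(t)-\psi(r)Z_\xi(r)|^{2}\leq 2\sigma(\xi)^{2}\bigl(|t||\psi(t)|^{2}+|r||\psi(r)|^{2}\bigr)$ together with $\int_{|u|\geq1}|u|^{-1-2b}du=\frac{1}{b}$. This gives at most $\frac{4}{b}\||t|^{\frac12}\psi\|_{L^{2}}^{2}\sigma(\xi)^{2}$.
\item For $|t-r|\leq 1$, your splitting works. The increment piece contributes at most $\frac{4}{1-2b}\|\psi\|_{L^{2}}^{2}\sigma(\xi)^{2}$.
\end{itemize}

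The remaining near-diagonal term $\iint_{|t-r|\leq1}|t|\,|\psi(t)-\psi(r)|^{2}|t-r|^{-1-2b}\,dt\,dr$ needs separate care. An $L^{\infty}$ bound on $|t|^{\frac12}\psi$ gives no control of differences, so your claim that this term is controlled by $\|\psi\|_{\dot H^{b}}$ and $\||t|^{\frac12}\psi\|_{L^{\infty}}$ is unjustified. Two ways to bound it:
\begin{itemize}
\item Since $\psi\in C_c^\infty$ with $\supp\psi\subset[-R,R]$, the integrand vanishes unless $|t|\leq R+1$, so the term is at most $(R+1)\|\psi\|_{\dot H^{b}}^{2}$.
\item Write $|t|^{\frac12}(\psi(t)-\psi(r))=\bigl(|t|^{\frac12}\psi(t)-|r|^{\frac12}\psi(r)\bigr)+\bigl(|r|^{\frac12}-|t|^{\frac12}\bigr)\psi(r)$ and use $\bigl||t|^{\frac12}-|r|^{\frac12}\bigr|^{2}\leq|t-r|$. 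This bounds the term by a $b$-dependent constant times $\||t|^{\frac12}\psi\|_{\dot H^{b}}^{2}+\|\psi\|_{L^{2}}^{2}$.
\end{itemize}

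With these changes, Steps 1 and 3 go through as you wrote them and give the lemma for the fixed $\psi$ used in the paper.
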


For Lemma 3.1, we refer to \cite[ Proposition 2.1]{BDT1999}.

\begin{Lemma}\label{lem3.2}
Let $s\in\R$, $0<b<\frac{1}{2}$ and $\Phi_{2}\in L_{2}^{0,s}$, we have
\begin{eqnarray*}
&&\psi(t)\int_{0}^{t}V(t-s)\Phi_{2}dW(s)\in L^{2}(\Omega,X_{s,b,\phi}),
\end{eqnarray*}
and
\begin{eqnarray*}
&&\mathbb{E}\left(\left\|\psi(t) \int_{0}^{t}V(t-s)\Phi_{2}dW(s)\right\|_{X_{s,b,\phi}}^{2}\right)
\leq M(b,\psi)\|\Phi_{2}\|_{L_{2}^{0,s}}^{2},
\end{eqnarray*}
where $M(b, \psi)$ is a constant depending only on $b$,
$\|\psi\|_{H_{t}^{b}}$ , $\||t|^{\frac{1}{2}}\psi\|_{L^{2}}$, $\||t|^{\frac{1}{2}}\psi\|_{L^{\infty}}$.
\end{Lemma}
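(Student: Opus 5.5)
The plan is to follow the proof of Lemma 3.1, i.e. \cite[Proposition 2.1]{BDT1999}, with the Airy phase $\varphi$ replaced by $\phi(\xi)=-\nu\xi|\xi|+\xi^{3}$. The argument there never uses the form of the phase, only that $V(t)$ is a unitary Fourier multiplier $e^{it\phi(\xi)}$ commuting with $J_x^s$. Write $w_2=\int_0^tV(t-s)\Phi_2dW(s)=\sum_i\int_0^tV(t-s)\Phi_2e_i\,d\beta_i(s)$. Since
\[
\|\psi w_2\|_{X_{s,b,\phi}}=\|J_x^sJ_t^b\,\psi(t)V(-t)w_2\|_{L^2_{xt}},
\]
we set $F(t)=V(-t)w_2(t)=\sum_i\int_0^tV(-s)\Phi_2e_i\,d\beta_i(s)$. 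This is a martingale in $H^s$ with independent increments. The task is then to estimate $\mathbb{E}\|\psi F\|_{H^b_t(\mathbb{R};H^s_x)}^2$.

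First, I would bound the $L^2_t$ part. By the It\^o isometry and unitarity of $V$ on $H^s$,
\[
\mathbb{E}\|\psi(t)F(t)\|_{H^s}^2=|\psi(t)|^2\,|t|\,\|\Phi_2\|_{L_2^{0,s}}^2 .
\]
Integrating in $t$ gives $\||t|^{1/2}\psi\|_{L^2}^2\|\Phi_2\|^2_{L_2^{0,s}}$.

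Second, I would bound the fractional part using the Slobodeckij seminorm
\[
\iint\frac{\|\psi(t)F(t)-\psi(r)F(r)\|_{H^s}^2}{|t-r|^{1+2b}}\,dt\,dr .
\]
Split $\psi(t)F(t)-\psi(r)F(r)=(\psi(t)-\psi(r))F(t)+\psi(r)(F(t)-F(r))$. For the first piece, taking expectations gives $|\psi(t)-\psi(r)|^2|t|\,\|\Phi_2\|^2$. I would handle this by using the seminorm of $\psi$ on the region where $|t|\lesssim$ support scale, and the support of $\psi$ together with $\||t|^{1/2}\psi\|_{L^\infty}$ elsewhere. For the second piece, independence of increments gives
\[
\mathbb{E}\|F(t)-F(r)\|_{H^s}^2=|t-r|\,\|\Phi_2\|^2_{L^{0,s}_2}.
\]
Multiplying by $|\psi(r)|^2|t-r|^{-1-2b}$, the kernel becomes $|t-r|^{-2b}$. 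This is integrable near the diagonal because $b<1/2$, and at infinity it is controlled by $|\psi(r)|^2$ times a truncation: for $|t-r|\ge1$ I would instead use the crude bound $\mathbb{E}\|F(t)-F(r)\|^2\le 2(|t|+|r|)\|\Phi_2\|^2$, where $b>0$ makes the tail integrable. Summing the pieces, the constant depends only on $b$, $\|\psi\|_{H^b}$, $\||t|^{1/2}\psi\|_{L^2}$ and $\||t|^{1/2}\psi\|_{L^\infty}$.

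The main obstacle is the cross term $(\psi(t)-\psi(r))F(t)$ for large $|t|$ with $r$ in the support. There the weight $|t|$ grows, so the estimate needs the kernel decay $|t-r|^{-1-2b}$ to beat $|t|$. I would handle this by symmetrizing: write the difference so that $F$ is evaluated at the point lying in $\operatorname{supp}\psi$, namely $\psi(t)F(t)-\psi(r)F(r)=\psi(t)(F(t)-F(r))+(\psi(t)-\psi(r))F(r)$, choosing the ordering according to which variable is closer to the support. I expect this to reduce to the same bounds as above. Measurability in $\omega$ and membership in $L^2(\Omega,X_{s,b,\phi})$ then follow from the finite expectation, by approximating with finite sums over $i$ and passing to the limit.
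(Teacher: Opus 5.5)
Your overall route is the one behind the cited result: the paper gives no argument of its own, only a reference to \cite[Proposition 1]{WG2010}, which transplants \cite[Proposition 2.1]{BDT1999} to the phase $\phi$. You are right that only the unitarity of $V(t)$ on $H^s$ is used.

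\textbf{The gap: the region $|t-r|\ge1$.} Your treatment of this region fails as written. Because $\mathbb{E}\|F(t)-F(r)\|_{H^s}^2=|t-r|\,\|\Phi_2\|_{L_2^{0,s}}^2$ holds exactly, the piece $\psi(r)(F(t)-F(r))$ contributes $\|\Phi_2\|_{L_2^{0,s}}^2\iint|\psi(r)|^2|t-r|^{-2b}\,dt\,dr=\infty$ for every $b<\frac12$. Your crude bound cannot repair this. For fixed $r$, $(|t|+|r|)\,|t-r|^{-1-2b}\sim|t|^{-2b}$ as $|t|\to\infty$, which is not integrable, so $b>0$ does \emph{not} make this tail integrable.

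Thus both pieces of $(\psi(t)-\psi(r))F(t)+\psi(r)(F(t)-F(r))$ diverge separately away from the diagonal. You noticed this only for the first piece; the two divergences cancel only in the sum.

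The fix is not to split there at all. For $|t-r|\ge1$,
\[
\mathbb{E}\|\psi(t)F(t)-\psi(r)F(r)\|_{H^s}^2\le2\bigl(|t|\,|\psi(t)|^2+|r|\,|\psi(r)|^2\bigr)\|\Phi_2\|_{L_2^{0,s}}^2 ,
\]
and $\int_{|h|\ge1}|h|^{-1-2b}\,dh=\frac1b$ gives the bound $\frac4b\||t|^{\frac12}\psi\|_{L^2}^2\|\Phi_2\|_{L_2^{0,s}}^2$. Your split is then needed only on $|t-r|<1$. There the second piece is bounded by a multiple of $\frac{1}{1-2b}\|\psi\|_{L^2}^2\|\Phi_2\|_{L_2^{0,s}}^2$, and no symmetrization is required. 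Your symmetrized split on $\{|r|\le|t|\}$ also works, but only if it is used for both pieces. On that region the growing weight multiplies $|\psi(t)|^2$, which is exactly why the crude bound becomes legitimate there.

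\textbf{The constant.} The remaining near-diagonal term $\iint_{|t-r|<1}|\psi(t)-\psi(r)|^2|t|\,|t-r|^{-1-2b}\,dt\,dr$ is bounded by $C_b(R+1)\|\psi\|_{H^b}^2$ when $\supp\psi\subset[-R,R]$. The quantity $\||t|^{\frac12}\psi\|_{L^\infty}$ plays no role in it. So your argument gives a constant depending on $b$, $\|\psi\|_{H^b}$, $\||t|^{\frac12}\psi\|_{L^2}$ and the size of $\supp\psi$. That is all the paper needs for its fixed cutoff $\psi$.

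You should not claim dependence on the three listed norms only, because that is not true for general $\psi$. When $\psi$ vanishes on $(-\infty,0)$ (negative times are treated symmetrically), stochastic Fubini and the It\^o isometry give the exact identity
\[
\mathbb{E}\|\psi w_2\|_{X_{s,b,\phi}}^2=\|\Phi_2\|_{L_2^{0,s}}^2\int_0^\infty\|\chi_{[\sigma,\infty)}\psi\|_{H^b}^2\,d\sigma .
\]
Combined with the boundedness of multiplication by $\chi_{[\sigma,\infty)}$ on $H^b$ for $0\le b<\frac12$, this identity is also the shortest proof of the lemma. Now take a sum of bumps $\psi=\sum_kT_k^{-1/2}\eta(\lambda_k(t-T_k))$ with $T_k=2^k$ and $\lambda_k=k^{1/(1-2b)}$. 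This keeps $\|\psi\|_{H^b}$, $\||t|^{\frac12}\psi\|_{L^2}$ and $\||t|^{\frac12}\psi\|_{L^\infty}$ finite, while the right-hand side of the identity is infinite.
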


For Lemma 3.2, we refer to \cite[Proposition 1]{WG2010}.

\begin{Lemma}(Integration by parts)\label{lem3.3}
Suppose that $f(s,\omega):=f(s)$ depends only on $s$, that $f\in C([0,t])$,
and that $f$ is of bounded variation on $[0,t]$. Then, we have
\begin{eqnarray*}
\int_{0}^{t}f(s)dB(s)=f(t)B(t)-\int_{0}^{t}B(s)df(s)го
\end{eqnarray*}
\end{Lemma}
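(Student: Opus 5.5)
We prove Lemma \ref{lem3.3} pathwise, reducing it to the classical integration by parts for Riemann--Stieltjes integrals. Fix $t>0$. Since $f$ is deterministic, continuous and of bounded variation on $[0,t]$, the Wiener integral $\int_0^t f(s)\,dB(s)$ is the $L^2(\Omega)$-limit of the Riemann sums
\begin{eqnarray*}
S_n=\sum_{k=0}^{n-1} f(s_k)\bigl(B(s_{k+1})-B(s_k)\bigr),
\end{eqnarray*}
taken along any sequence of partitions $0=s_0<s_1<\cdots<s_n=t$ with mesh tending to zero. This is the It\^o isometry applied to the step functions $f_n=\sum_k f(s_k)\chi_{[s_k,s_{k+1})}$. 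These converge to $f$ uniformly by uniform continuity, hence in $L^2([0,t])$.

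The main step is Abel's summation by parts. Using $s_0=0$ and $B(0)=0$, we rewrite
\begin{eqnarray*}
S_n=f(s_{n-1})B(t)-\sum_{k=1}^{n-1} B(s_k)\bigl(f(s_k)-f(s_{k-1})\bigr)
=f(t)B(t)-\sum_{k=1}^{n} B(s_k)\bigl(f(s_k)-f(s_{k-1})\bigr).
\end{eqnarray*}
The second expression follows by adding and subtracting the $k=n$ term. The first term equals $f(t)B(t)$ exactly. For each $\omega$, the path $s\mapsto B(s,\omega)$ is continuous and $f$ has bounded variation, so the Riemann--Stieltjes integral $\int_0^t B(s)\,df(s)$ exists. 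The sums $\sum_k B(s_k)(f(s_k)-f(s_{k-1}))$ converge to it pointwise in $\omega$. The error is bounded by $\mathrm{osc}(B,\text{mesh})\cdot \mathrm{Var}_{[0,t]}(f)$, which tends to $0$ by uniform continuity of the path.

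To conclude, we combine the two modes of convergence. Almost surely, $S_n\to f(t)B(t)-\int_0^t B(s)\,df(s)$, and in $L^2(\Omega)$, $S_n\to\int_0^t f\,dB$. Passing to a subsequence, the $L^2$-convergence becomes almost sure, so the two limits agree almost surely. This identifies the stochastic integral with the pathwise expression.

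The only delicate point is checking that the $L^2$ limit of $S_n$ is the It\^o integral of $f$ itself. This needs $f_n\to f$ in $L^2([0,t])$, which continuity guarantees. The bounded-variation hypothesis is used only to make sense of $\int_0^t B\,df$ pathwise, together with the oscillation bound above. No measurability issues arise, since $f$ is deterministic.
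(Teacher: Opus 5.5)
Your argument is correct. It is, however, a different route from the paper's, which gives no argument of its own and only cites Theorem 4.1.5 of {\O}ksendal. There the formula is obtained from It\^{o}'s formula: $d\bigl(f(t)B(t)\bigr)=f(t)\,dB(t)+B(t)\,df(t)$, with no second-order correction because $f$ has finite variation, so $df\cdot dB=0$.

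That route is a one-line computation and fits into the general product rule for semimartingales. However, the classical It\^{o} formula applied to $g(t,x)=f(t)x$ needs $f$ to be differentiable. For a merely continuous function of bounded variation one therefore needs either the semimartingale version or an extra approximation step.

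Your Abel-summation argument uses nothing beyond two ingredients:
\begin{itemize}
\item the It\^{o} isometry for the deterministic step functions $f_n$;
\item the elementary Riemann--Stieltjes estimate $\bigl|\sum_{k}B(s_k)(f(s_k)-f(s_{k-1}))-\int_0^t B\,df\bigr|\le \mathrm{osc}(B,\mathrm{mesh})\,\mathrm{Var}_{[0,t]}(f)$.
\end{itemize}
As a result it works directly under the stated hypotheses. It also makes explicit that the identity holds almost surely, as it must, since the left-hand side is defined only up to null sets. Finally, it carries over verbatim to complex-valued $f$ such as $e^{-is\xi^{3}}$, which is how the lemma is actually used in the proofs of Lemmas 3.6 and 3.7.
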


For Lemma 3.3, we refer to \cite[Theorem 4.1.5]{O1998}.

\begin{Lemma}\label{lem3.4}
Let $s\in\R$, $\Phi_{1}\in L^{0,s}_{2}$, and
\begin{eqnarray}
&&w_{1}^{N}=\sum_{i=N+1}^{\infty}\int_{0}^{t}U(t-s)\Phi_{1} e_{i}d\beta_{i}(s),\label{3.01}\\
&&P^{N}w_{1N}=\sum_{i=1}^{N}\int_{0}^{t}U(t-s)P^{N}\Phi_{1} e_{i}d\beta_{i}(s).\label{3.02}
\end{eqnarray}
Then, we have
\begin{eqnarray}
&&\mathbb{E}\left(\sup_{t\in[0,1]}\|w_{1}^{N}(t,\cdot)\|_{H^{s}}^{2}\right)
\leq C\sum_{i=N+1}^{\infty}\|\Phi_{1} e_{i}\|_{H^{s}}^{2},\label{3.03}\\
&&\mathbb{E}\left(\sup_{t\in[0,1]}\|P^{N}w_{1N}(t,\cdot)\|_{H^{s}}^{2}\right)
\leq C\sum_{i=1}^{N}\|P^{N}\Phi_{1} e_{i}\|_{H^{s}}^{2}.\label{3.04}
\end{eqnarray}

\end{Lemma}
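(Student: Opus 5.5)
The plan is to prove \eqref{3.03} and \eqref{3.04} by reducing the stochastic convolution to a martingale via the unitary group and then applying Doob's maximal inequality together with the It\^o isometry. Since $U(t)$ is a unitary group on $H^{s}(\R)$ (it is a Fourier multiplier of modulus one), we write
\begin{eqnarray*}
w_{1}^{N}(t)=U(t)M^{N}(t),\qquad M^{N}(t):=\sum_{i=N+1}^{\infty}\int_{0}^{t}U(-s)\Phi_{1}e_{i}\,d\beta_{i}(s).
\end{eqnarray*}
Consequently $\|w_{1}^{N}(t)\|_{H^{s}}=\|M^{N}(t)\|_{H^{s}}$ for every $t$, and therefore
\begin{eqnarray*}
\sup_{t\in[0,1]}\|w_{1}^{N}(t)\|_{H^{s}}=\sup_{t\in[0,1]}\|M^{N}(t)\|_{H^{s}}.
\end{eqnarray*}

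The process $M^{N}$ is an $H^{s}$-valued square integrable martingale with respect to the filtration generated by $(\beta_{i})$. Indeed, each term is an It\^o integral of a deterministic integrand, the terms are independent, and the series converges in $L^{2}(\Omega,H^{s})$. To justify the convergence we use the It\^o isometry and independence of the $\beta_{i}$:
\begin{eqnarray*}
\mathbb{E}\|M^{N}(t)\|_{H^{s}}^{2}=\sum_{i=N+1}^{\infty}\int_{0}^{t}\|U(-s)\Phi_{1}e_{i}\|_{H^{s}}^{2}ds=t\sum_{i=N+1}^{\infty}\|\Phi_{1}e_{i}\|_{H^{s}}^{2}.
\end{eqnarray*}
The right-hand side is finite because $\Phi_{1}\in L_{2}^{0,s}$. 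The same computation for partial sums $\sum_{i=N+1}^{K}$ shows that they form a Cauchy sequence in $L^{2}(\Omega,C([0,1],H^{s}))$, after the maximal inequality below. This gives a continuous version of $M^{N}$.

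Next we apply Doob's $L^{2}$ maximal inequality to the real submartingale $\|M^{N}(t)\|_{H^{s}}$; equivalently, we can use the Hilbert-space valued Doob inequality. This yields
\begin{eqnarray*}
\mathbb{E}\Big(\sup_{t\in[0,1]}\|M^{N}(t)\|_{H^{s}}^{2}\Big)\leq4\,\mathbb{E}\|M^{N}(1)\|_{H^{s}}^{2}=4\sum_{i=N+1}^{\infty}\|\Phi_{1}e_{i}\|_{H^{s}}^{2},
\end{eqnarray*}
which is \eqref{3.03} with $C=4$.

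The bound \eqref{3.04} follows identically. We replace $\Phi_{1}e_{i}$ by $P^{N}\Phi_{1}e_{i}$ and the index range by $1\leq i\leq N$. Since the sum is finite, no convergence issue arises, and we can use $\|U(-s)P^{N}\Phi_{1}e_{i}\|_{H^{s}}=\|P^{N}\Phi_{1}e_{i}\|_{H^{s}}$.

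The only delicate point is the martingale and measurability justification for the infinite sum. We handle it by proving the estimate first for finite truncations $\sum_{i=N+1}^{K}$. We then pass to the limit $K\to\infty$, using the Cauchy property obtained from the same Doob bound applied to $\sum_{i=K+1}^{K'}$.
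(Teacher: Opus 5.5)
Your argument is correct, but after the common first step it takes a different route from the paper. Both you and the paper use that $U$ is a unitary group on $H^{s}$ to write $w_{1}^{N}(t)=U(t)M^{N}(t)$ with $M^{N}(t)=\sum_{i>N}\int_{0}^{t}U(-s)\Phi_{1}e_{i}\,d\beta_{i}(s)$, so that $\|w_{1}^{N}(t)\|_{H^{s}}=\|M^{N}(t)\|_{H^{s}}$.

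The paper then applies the It\^o formula to $\|J_{x}^{s}M^{N}(t)\|_{L^{2}}^{2}$. This splits $\|w_{1}^{N}(t)\|_{H^{s}}^{2}$ into a stochastic integral $H_{1}(t)$ plus the drift $H_{2}(t)=t\sum_{i>N}\|\Phi_{1}e_{i}\|_{H^{s}}^{2}$. It bounds $\mathbb{E}\sup_{t}H_{1}$ by Burkholder--Davis--Gundy, H\"older and Young, and absorbs $\frac{1}{2}\mathbb{E}\sup_{t}\|w_{1}^{N}\|_{H^{s}}^{2}$ into the left-hand side.

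You bypass both the It\^o formula and BDG. Doob's $L^{2}$ maximal inequality for the submartingale $\|M^{N}(t)\|_{H^{s}}$, combined with the It\^o isometry, gives the bound directly with $C=4$; independence of the $\beta_{i}$ kills the cross terms. Your route is shorter, and your truncation/Cauchy argument also makes it more complete. It supplies the continuous version that makes the supremum over $[0,1]$ meaningful. It also does not need to know in advance that $\mathbb{E}\sup_{t}\|w_{1}^{N}(t)\|_{H^{s}}^{2}<\infty$. The paper's absorption step tacitly assumes this finiteness for the infinite sum, since otherwise the inequality $X\leq\frac{1}{2}X+C$ says nothing.

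The paper's It\^o-formula/BDG scheme is the more standard template for stochastic convolution estimates and adapts more readily to $p$-th moment bounds. For this $L^{2}$ statement with deterministic integrands, however, it gains nothing over your argument.
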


\noindent{\bf Proof.}
Inspired by \cite[Theorem 6.10]{PZ2014}, we prove Lemma 3.4.

\noindent We denote
\begin{eqnarray}
&&w^{N}:=\sum_{i=N+1}^{\infty}\int_{0}^{t}U(-s)\Phi_{1} e_{i}d\beta_{i}(s),\label{3.05}\\
&&P^{N}w_{N}:=\sum_{i=1}^{N}\int_{0}^{t}U(-s)P^{N}\Phi_{1} e_{i}d\beta_{i}(s).\label{3.06}
\end{eqnarray}
From \eqref{3.05} and \eqref{3.06}, we have
\begin{eqnarray}
&&\|w_{1}^{N}(t,\cdot)\|_{H^{s}}=\|U(t)w^{N}(t,\cdot)\|_{H^{s}}=\|w^{N}(t,\cdot)\|_{H^{s}},\label{3.07}\\
&&\|P^{N}w_{1N}(t,\cdot)\|_{H^{s}}=\|U(t)P^{N}w_{N}(t,\cdot)\|_{H^{s}}=\|P^{N}w_{N}(t,\cdot)\|_{H^{s}}.\label{3.08}
\end{eqnarray}
Next, we prove \eqref{3.03} and \eqref{3.04}, respectively. For \eqref{3.03}, by using \eqref{3.05}, we have
\begin{eqnarray}
&&d J_{x}^{s}w^{N}(t)=\sum_{i=N+1}^{\infty}U(-t)J_{x}^{s}\Phi_{1} e_{i}d\beta_{i}(t).\label{3.09}
\end{eqnarray}
We denote $F:L^{2}\mapsto\R$, and for $X\in L^{2}$,
\begin{eqnarray}
&&F(X)=\|X\|_{L^{2}}^{2}.\label{3.010}
\end{eqnarray}
From \eqref{3.010}, we have
\begin{eqnarray}
&&F^{\prime}(X)=2X,\,F^{\prime\prime}(X)=2.\label{3.011}
\end{eqnarray}
By using \eqref{3.09}-\eqref{3.011} and It\^{o} formula, we have
\begin{eqnarray}
&&\|w_{1}^{N}(t,\cdot)\|_{H^{s}}^{2}=\|w^{N}(t,\cdot)\|_{H^{s}}^{2}=\|J_{x}^{s}w^{N}(t,\cdot)\|_{L_{x}^{2}}^{2}\nonumber\\
&&=2\sum_{i=N+1}^{\infty}\int_{0}^{t}\langle J_{x}^{s}w^{N}, J_{x}^{s}U(-s)\Phi_{1} e_{i}\rangle_{L_{x}^{2}} d\beta_{i}(s)+\int_{0}^{t}\sum_{i=N+1}^{\infty}\|J_{x}^{s}\Phi_{1} e_{i}\|_{L_{x}}^{2}ds\nonumber\\
&&=2\sum_{i=N+1}^{\infty}\int_{0}^{t}\langle J_{x}^{s}w_{1}^{N}, J_{x}^{s}\Phi_{1} e_{i}\rangle_{L_{x}^{2}} d\beta_{i}(s)
+\int_{0}^{t}\sum_{i=N+1}^{\infty}\|\Phi_{1} e_{i}\|_{H^{s}}^{2}ds.\label{3.012}
\end{eqnarray}
From \eqref{3.012}, we have
\begin{eqnarray}
&&\|w_{1}^{N}(t,\cdot)\|_{H^{s}}^{2}=2\sum_{i=N+1}^{\infty}\int_{0}^{t}\langle J_{x}^{s}w_{1}^{N}, J_{x}^{s}\Phi_{1} e_{i}\rangle_{L_{x}^{2}} d\beta_{i}(s)+\int_{0}^{t}\sum_{i=N+1}^{\infty}\|\Phi_{1} e_{i}\|_{H^{s}}^{2}ds\nonumber\\
&&=H_{1}(t)+H_{2}(t),\label{3.013}
\end{eqnarray}
where
\begin{eqnarray*}
&&H_{1}(t)=2\sum_{i=N+1}^{\infty}\int_{0}^{t}\langle J_{x}^{s}w_{1}^{N}, J_{x}^{s}\Phi_{1} e_{i}\rangle_{L_{x}^{2}} d\beta_{i}(s),\\
&&H_{2}(t)=\int_{0}^{t}\sum_{i=N+1}^{\infty}\|\Phi_{1} e_{i}\|_{H^{s}}^{2}ds.
\end{eqnarray*}
For $H_{1}(t)$, by using Burkholder-Davis-Gundy (BDG) inequality, H\"{o}lder inequality and Young's inequality, we have
\begin{eqnarray}
&&\mathbb{E}\left(\sup_{t\in[0,1]} H_{1}(t)\right)\leq 6\mathbb{E}\left[\left(\int_{0}^{1}
\sum_{i=N+1}^{\infty}\langle J_{x}^{s}w_{1}^{N}, J_{x}^{s}\Phi_{1} e_{i}\rangle_{L_{x}^{2}}^{2}ds\right)^{\frac{1}{2}}\right]\nonumber\\
&&\leq C\mathbb{E}\left[\left(\int_{0}^{1}\sum_{i=N+1}^{\infty}\|J_{x}^{s}w_{1}^{N}\|_{L_{x}^{2}}^{2}\|J_{x}^{s}\Phi_{1} e_{i}\|_{L_{x}^{2}}^{2}ds\right)^{\frac{1}{2}}\right]\nonumber\\
&&=C\left(\sum_{i=N+1}^{\infty}\|\Phi_{1} e_{i}\|_{H^{s}}^{2}\right)^{\frac{1}{2}}
\mathbb{E}\left[\left(\int_{0}^{1}\|J_{x}^{s}w_{1}^{N}\|_{L_{x}^{2}}^{2}ds\right)^{\frac{1}{2}}\right]\nonumber\\
&&\leq C\left(\sum_{i=N+1}^{\infty}\|\Phi_{1} e_{i}\|_{H^{s}}^{2}\right)^{\frac{1}{2}}
\mathbb{E}\left(\sup_{t\in[0,1]}\|w_{1}^{N}(t,\cdot)\|_{H^{s}}\right)\nonumber\\
&&\leq C\left(\sum_{i=N+1}^{\infty}\|\Phi_{1} e_{i}\|_{H^{s}}^{2}\right)^{\frac{1}{2}}
\left[\mathbb{E}\left(\sup_{t\in[0,1]}\|w_{1}^{N}(t,\cdot)\|_{H^{s}}^{2}\right)\right]^{\frac{1}{2}}\nonumber\\
&&\leq \frac{1}{2}\mathbb{E}\left(\sup_{t\in[0,1]}\|w_{1}^{N}(t,\cdot)\|_{H^{s}}^{2}\right)
+C_{1}\sum_{i=N+1}^{\infty}\|\Phi_{1} e_{i}\|_{H^{s}}^{2}.\label{3.014}
\end{eqnarray}
For $H_{2}(t)$, we have
\begin{eqnarray}
&&\mathbb{E}\left(\sup_{t\in[0,1]} H_{2}(t)\right)\leq C\sum_{i=N+1}^{\infty}\|\Phi_{1} e_{i}\|_{H^{s}}^{2}.\label{3.015}
\end{eqnarray}
Combining \eqref{3.013}, \eqref{3.014} with \eqref{3.015}, we have
\begin{eqnarray}
&&\mathbb{E}\left(\sup_{t\in[0,1]}\|w_{1}^{N}(t,\cdot)\|_{H^{s}}^{2}\right)
\leq \frac{1}{2}\mathbb{E}\left(\sup_{t\in[0,1]}\|w_{1}^{N}(t,\cdot)\|_{H^{s}}^{2}\right)
+C_{2}\sum_{i=N+1}^{\infty}\|\Phi_{1} e_{i}\|_{H^{s}}^{2}.\label{3.016}
\end{eqnarray}
From \eqref{3.016}, we have
\begin{eqnarray}
&&\mathbb{E}\left(\sup_{t\in[0,1]}\|w_{1}^{N}(t,\cdot)\|_{H^{s}}^{2}\right)
\leq 2C_{2}\sum_{i=N+1}^{\infty}\|\Phi_{1} e_{i}\|_{H^{s}}^{2}.\label{3.017}
\end{eqnarray}
It follows from \eqref{3.017} that \eqref{3.03} is valid.

\noindent For \eqref{3.04}, by using \eqref{3.06}, we have
\begin{eqnarray}
&&d J_{x}^{s}P^{N}w_{N}(t)=\sum_{i=1}^{N}U(-t)J_{x}^{s}P^{N}\Phi_{1} e_{i}d\beta_{i}(t).\label{3.018}
\end{eqnarray}
By using \eqref{3.018} and a similar argument as in the proof of \eqref{3.03}, we obtain \eqref{3.04}.

The proof of Lemma 3.4 is completed.

\begin{Lemma}\label{lem3.5}
Let $s\in\R$,
\begin{eqnarray*}
&&\hspace{-0.5cm}A=\left\{\omega: \lim_{k\rightarrow\infty}\sup_{t\in[0,T_{\omega}]}\|f_{n_{k}}(t,\omega)-f(\omega)\|_{H^{s}}=0,
{\rm and}\, \forall n\in\Z^{+},\,\lim_{t\rightarrow t_{0}}\|f_{n}(t,\omega)-f_{n}(t_{0},\omega)\|_{H^{s}}=0\right\},\\
&&\hspace{-0.5cm}B=\left\{\omega: \forall t,t_{0}\in[0,T_{\omega}]\lim_{t\rightarrow t_{0}}\|f(t,\omega)-f(t_{0},\omega)\|_{H^{s}}=0\right\},
\end{eqnarray*}
and
\begin{eqnarray}
&&\mathbb{P}(A)=1.\label{3.019}
\end{eqnarray}
Then, we have
\begin{eqnarray}
&&\mathbb{P}\left(B\right)=1.\label{3.020}
\end{eqnarray}
\end{Lemma}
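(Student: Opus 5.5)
The idea is the classical fact that a uniform limit of continuous functions is continuous, applied pathwise on the full-measure set $A$. Concretely, we show that $A\subset B$; then $\mathbb{P}(B)\geq\mathbb{P}(A)=1$ by \eqref{3.019}, which gives \eqref{3.020}. Implicitly, $f(\omega)$ here means the limit process $t\mapsto f(t,\omega)$ on $[0,T_{\omega}]$, and the convergence in $A$ is read as $\sup_{t\in[0,T_{\omega}]}\|f_{n_k}(t,\omega)-f(t,\omega)\|_{H^s}\to0$. If $B$ is not obviously measurable, the inclusion still shows that $B$ contains a set of probability one, so it is measurable with respect to the completed measure and has probability $1$.

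Fix $\omega\in A$, a point $t_0\in[0,T_\omega]$, and $\eta>0$. By the first condition defining $A$, we choose $k$ with
\begin{eqnarray*}
\sup_{t\in[0,T_\omega]}\|f_{n_k}(t,\omega)-f(t,\omega)\|_{H^s}<\frac{\eta}{3}.
\end{eqnarray*}
By the second condition in $A$, applied with $n=n_k$, there is $\delta>0$ such that $\|f_{n_k}(t,\omega)-f_{n_k}(t_0,\omega)\|_{H^s}<\frac{\eta}{3}$ whenever $|t-t_0|<\delta$ and $t\in[0,T_\omega]$.

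The triangle inequality then gives, for such $t$,
\begin{eqnarray*}
\|f(t,\omega)-f(t_0,\omega)\|_{H^s}\leq \|f(t,\omega)-f_{n_k}(t,\omega)\|_{H^s}+\|f_{n_k}(t,\omega)-f_{n_k}(t_0,\omega)\|_{H^s}+\|f_{n_k}(t_0,\omega)-f(t_0,\omega)\|_{H^s}<\eta.
\end{eqnarray*}
Since $t_0$ and $\eta$ were arbitrary, $\omega\in B$.

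The only point requiring care is the interpretation of the set $A$. The continuity of each $f_n$ must hold at every $t_0\in[0,T_\omega]$, and the convergence must be uniform in $t$ over the random interval $[0,T_\omega]$. The $\omega$-dependence of $T_\omega$ causes no difficulty, because the argument is carried out for each fixed $\omega$ separately. There is no genuine analytic obstacle here; the lemma is a pathwise restatement of the continuity of uniform limits.
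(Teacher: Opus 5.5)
Your proposal is correct and follows the paper's proof: both show $A\subset B$ pathwise by the triangle inequality through a single approximant (the paper uses $f_{n_{K+1}}$ and gets a $2\epsilon$ bound, you use $f_{n_k}$ with $\eta/3$), and then conclude $\mathbb{P}(B)\geq\mathbb{P}(A)=1$. Your $\epsilon$--$\delta$ formulation and your remark on the measurability of $B$ via completeness slightly tidy the paper's argument, which writes $\lim_{t\to t_0}$ where a $\limsup$ is meant.
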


\noindent{\bf Proof.}
We claim that
\begin{eqnarray}
&&A\subset B.\label{3.021}
\end{eqnarray}
Now, we prove \eqref{3.021}. By using a pointwise argument, for any $\omega\in A$, we have
\begin{eqnarray}
&&\lim_{k\rightarrow\infty}\sup_{t\in[0,T_{\omega}]}\|f_{n_{k}}(t,\omega)-f(t,\omega)\|_{H^{s}}=0,\label{3.022}\\
&&\lim_{t\rightarrow t_{0}}\|f_{n}(t,\omega)-f_{n}(t_{0},\omega)\|_{H^{s}}=0,\, \forall n\in\Z^{+}.\label{3.023}
\end{eqnarray}
From \eqref{3.022}, we know that for all $\epsilon>0$, there exists $K$, such that for all $k\geq K$, we have
\begin{eqnarray}
&&\sup_{t\in [0,T_{\omega}]}\left\|f(t,\omega)-f_{n_{k}}(t,\omega)\right\|_{H^{s}}<\epsilon.\label{3.024}
\end{eqnarray}
Now, we prove that $\omega\in B$. By using triangle inequality, for any $t,t_{0}\in[0, T_{\omega}]$, we have
\begin{eqnarray}
&&\|f(t,\omega)-f(t_{0},\omega)\|_{H^{s}}\leq \|f(t,\omega)-f_{n_{K+1}}(t,\omega)\|_{H^{s}}
+\|f_{n_{K+1}}(t,\omega)-f_{n_{K+1}}(t_{0},\omega)\|_{H^{s}}\nonumber\\
&&\quad+\|f_{n_{K+1}}(t_{0},\omega)-f(t_{0},\omega)\|_{H^{s}}\nonumber\\
&&\leq 2\sup_{t\in[0,T_{\omega}]}\|f(t,\omega)-f_{n_{K+1}}(t,\omega)\|_{H^{s}}
+\|f_{n_{K+1}}(t,\omega)-f_{n_{K+1}}(t_{0},\omega)\|_{H^{s}}.\label{3.025}
\end{eqnarray}
By using \eqref{3.023}-\eqref{3.025}, we have
\begin{eqnarray}
&&\lim_{t\rightarrow t_{0}}\|f(t,\omega)-f(t_{0},\omega)\|_{H^{s}}\leq 2\lim_{t\rightarrow t_{0}}\sup_{t\in[0,T_{\omega}]}\|f(t,\omega)-f_{n_{K+1}}(t,\omega)\|_{H^{s}}\nonumber\\
&&\quad+\lim_{t\rightarrow t_{0}}\|f_{n_{K+1}}(t,\omega)-f_{n_{K+1}}(t_{0},\omega)\|_{H^{s}}\nonumber\\
&&<2\epsilon.\label{3.026}
\end{eqnarray}
It follows from \eqref{3.026} that
\begin{eqnarray}
&&\lim_{t\rightarrow t_{0}}\|f(t,\omega)-f(t_{0},\omega)\|_{H^{s}}=0.\label{3.027}
\end{eqnarray}
By using \eqref{3.027}, we have that \eqref{3.021} is valid.
Combining \eqref{3.019} with \eqref{3.021}, we have that \eqref{3.020} is valid.

The proof of Lemma 3.5 is completed.

\begin{Lemma}\label{lem3.6}
Let $s\in\R$, $\Phi_{1}\in L_{2}^{0,s}$. Then, we have
\begin{eqnarray}
&&\mathbb{P}\left(\left\{\omega: \forall t_{0}, t\in [0,T_{\omega}], T_{\omega}\in (0,1), \lim_{t\rightarrow t_{0}}\left\|w_{1}(t,\omega,\cdot)-w_{1}(t_{0},\omega,\cdot)\right\|_{H^{s}}=0\right\}\right)\nonumber\\
&&\quad=1,\label{3.028}
\end{eqnarray}
where
\begin{eqnarray*}
&&w_{1}=\int_{0}^{t}U(t-s)\Phi_{1}dW(s).
\end{eqnarray*}

\end{Lemma}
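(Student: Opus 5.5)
We will deduce Lemma 3.6 from Lemma 3.5, applied with $f=w_{1}$ and a sequence of approximations $f_{n}$ that are continuous in time for each fixed $\omega$. The natural choice is a finite-dimensional, frequency-truncated version of the stochastic convolution, which is Lemma 3.4 with $N$ in place of $n$: we set
\begin{eqnarray*}
f_{N}:=P^{N}w_{1N}=\sum_{i=1}^{N}\int_{0}^{t}U(t-s)P^{N}\Phi_{1} e_{i}d\beta_{i}(s).
\end{eqnarray*}
Then $w_{1}-P^{N}w_{1N}=w_{1}^{N}+\sum_{i=1}^{N}\int_{0}^{t}U(t-s)(I-P^{N})\Phi_{1}e_{i}d\beta_{i}(s)$, and both pieces are controlled by Lemma 3.4 (the first by \eqref{3.03}, the second by the argument behind \eqref{3.04} with $P^{N}$ replaced by $I-P^{N}$). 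This gives
\begin{eqnarray*}
\mathbb{E}\Big(\sup_{t\in[0,1]}\|w_{1}-P^{N}w_{1N}\|_{H^{s}}^{2}\Big)\leq C\sum_{i=N+1}^{\infty}\|\Phi_{1}e_{i}\|_{H^{s}}^{2}+C\sum_{i=1}^{N}\|(I-P^{N})\Phi_{1}e_{i}\|_{H^{s}}^{2},
\end{eqnarray*}
and the right-hand side tends to $0$. For the first sum this is because it is the tail of the convergent series defining $\|\Phi_{1}\|_{L_{2}^{0,s}}^{2}$. For the second it follows from dominated convergence over $i$: each term is bounded by $\|\Phi_{1}e_{i}\|_{H^{s}}^{2}$ and tends to $0$ since $P^{N}\to I$ strongly on $H^{s}$.

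Since $T_{\omega}<1$, $L^{2}(\Omega)$ convergence of $\sup_{[0,1]}\|\cdot\|_{H^{s}}$ implies convergence in probability. We then extract a subsequence $n_{k}$ along which $\sup_{t\in[0,T_{\omega}]}\|f_{n_{k}}(t)-w_{1}(t)\|_{H^{s}}\to0$ almost surely. This is the first condition in the set $A$ of Lemma 3.5.

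Next we verify that for almost every $\omega$, every $f_{N}$ is continuous in $t$ with values in $H^{s}$. Here the integrand $U(t-s)P^{N}\Phi_{1}e_{i}$ is deterministic and frequency-localized, so we write $f_{N}(t)=U(t)\sum_{i\le N}\int_{0}^{t}U(-s)P^{N}\Phi_{1}e_{i}\,d\beta_{i}(s)$. By Lemma 3.3 (integration by parts, applied Fourier-mode-wise, since $s\mapsto U(-s)P^{N}\Phi_{1}e_{i}$ is smooth in $s$ with values in $H^{s}$ thanks to the frequency cutoff), each stochastic integral equals
\begin{eqnarray*}
U(-t)P^{N}\Phi_{1}e_{i}\,\beta_{i}(t)-\int_{0}^{t}\beta_{i}(s)\,\partial_{s}\big(U(-s)P^{N}\Phi_{1}e_{i}\big)ds.
\end{eqnarray*}
This is a pathwise continuous $H^{s}$-valued function whenever the Brownian paths $\beta_{1},\dots,\beta_{N}$ are continuous. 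Combined with the strong continuity of $U(t)$ on $H^{s}$, this gives continuity of $f_{N}$ for all $N$ simultaneously on a full-measure set (a countable intersection). Hence $\mathbb{P}(A)=1$, and Lemma 3.5 yields \eqref{3.028}.

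The main obstacle is justifying the pathwise identification through Lemma 3.3 in the Hilbert-space setting. Lemma 3.3 is stated for scalar integrands, so we must argue pointwise in $\xi$ on the Fourier side and then use the bounded support $|\xi|\le N$ to pass the $s$-derivative $i\varphi(\xi)$ inside. We will also need to check that the $H^{s}$-valued Bochner integral coincides with the Itô integral almost surely. A secondary point is that the subsequence must be chosen independently of $T_{\omega}$; this is handled by working with $\sup_{[0,1]}$ throughout.
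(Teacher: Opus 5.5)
Your proposal is correct and follows essentially the same route as the paper: approximate $w_1$ by the finite-mode, frequency-truncated convolutions $F_n=\sum_{i\le n}P_n f_i$, get pathwise continuity of each $F_n$ from the integration by parts Lemma 3.3, get a.s.\ uniform convergence along a subsequence from Lemma 3.4, and conclude with Lemma 3.5. Your dominated-convergence argument replaces the paper's $N_1,N_2$ case analysis. One notational caution: in the paper $P^N$ denotes the high-frequency remainder $|\xi|>N$, so \eqref{3.04} is exactly your bound for the $(I-P^N)$ piece, whereas you use $P^N$ for the low-frequency cutoff that the paper calls $P_N$; with that translation your $f_N$ is the paper's $F_N$.
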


\noindent{\bf Proof.}
Since $(e_{i})_{i=1}^{\infty}$ is an orthonormal basis of $L^{2}$,
$\Phi_{1}: L^{2}\rightarrow H^{s}$ is a Hilbert-Schmidt operator, it follows that
\begin{eqnarray}
&&\|\Phi_{1} e_{i}\|_{H^{s}}<\infty,\label{3.029}\\
&&\sum_{i=1}^{\infty}\|\Phi_{1} e_{i}\|_{H^{s}}^{2}<\infty.\label{3.030}
\end{eqnarray}
From \eqref{3.030}, for any $\epsilon>0$, there exists $N_{1}\geq 1$ with $N_{1}\in\mathbb{N}^{+}$, such that
\begin{eqnarray}
&&\sum_{i=N_{1}+1}^{\infty}\|\Phi_{1} e_{i}\|_{H^{s}}^{2}<\frac{\epsilon}{2}.\label{3.031}
\end{eqnarray}
It follows from \eqref{3.029} that for the above $\epsilon>0$, there exists $N_{2}\in \mathbb{N}^{+}$, such that
\begin{eqnarray}
&&\sum_{i=1}^{N_{1}}\|P^{N_{2}}\Phi_{1} e_{i}\|_{H^{s}}^{2}< \frac{\epsilon}{2}.\label{3.032}
\end{eqnarray}
We consider the following cases:

\noindent {\bf Case 1.} When $N_{1}=N_{2}$, from \eqref{3.031} and \eqref{3.032}, we have
\begin{eqnarray}
&&\sum_{i=N_{1}+1}^{\infty}\|\Phi_{1} e_{i}\|_{H^{s}}^{2}+\sum_{i=1}^{N_{1}}\|P^{N_{1}}\Phi_{1} e_{i}\|_{H^{s}}^{2}< \epsilon
.\label{3.033}
\end{eqnarray}
{\bf Case 2.} When $N_{1}>N_{2}$, from \eqref{3.031} and \eqref{3.032}, we have
\begin{eqnarray}
&&\sum_{i=N_{1}+1}^{\infty}\|\Phi_{1} e_{i}\|_{H^{s}}^{2}+\sum_{i=1}^{N_{1}}\|P^{N_{1}}\Phi_{1} e_{i}\|_{H^{s}}^{2}\nonumber\\
&&\leq \sum_{i=N_{1}+1}^{\infty}\|\Phi_{1} e_{i}\|_{H^{s}}^{2}+\sum_{i=1}^{N_{1}}\|P^{N_{2}}\Phi_{1} e_{i}\|_{H^{s}}^{2}< \epsilon
.\label{3.034}
\end{eqnarray}
{\bf Case 3.} When $N_{1}<N_{2}$, from \eqref{3.031} and \eqref{3.032}, we have
\begin{eqnarray}
&&\sum_{i=N_{2}+1}^{\infty}\|\Phi_{1} e_{i}\|_{H^{s}}^{2}+\sum_{i=1}^{N_{2}}\|P^{N_{2}}\Phi_{1} e_{i}\|_{H^{s}}^{2}\nonumber\\
&&\leq \sum_{i=N_{2}+1}^{\infty}\|\Phi_{1} e_{i}\|_{H^{s}}^{2}+\sum_{i=1}^{N_{1}}\|P^{N_{2}}\Phi_{1} e_{i}\|_{H^{s}}^{2}
+\sum_{i=N_{1}+1}^{N_{2}}\|\Phi_{1} e_{i}\|_{H^{s}}^{2}\nonumber\\
&&= \frac{\epsilon}{2}+\sum_{i=N_{1}+1}^{\infty}\|\Phi_{1} e_{i}\|_{H^{s}}^{2}<\epsilon
.\label{3.035}
\end{eqnarray}
Let $N=\max\{N_{1}, N_{2}\}$. Combining \eqref{3.033}, \eqref{3.034} with \eqref{3.035}, we obtain
\begin{eqnarray}
&&\sum_{i=N+1}^{\infty}\|\Phi_{i} e_{i}\|_{H^{s}}^{2}
+\sum_{i=1}^{N}\|P^{N}\Phi_{1} e_{i}\|_{H^{s}}^{2}< \epsilon.\label{3.036}
\end{eqnarray}
For the above $N$ and $n\geq N$, $n\in\mathbf{N}^{+}$,
we have
\begin{eqnarray}
&&\sum_{i=n+1}^{\infty}\|\Phi_{i} e_{i}\|_{H^{s}}^{2}+\sum_{i=1}^{n}\|P^{n}\Phi_{1} e_{i}\|_{H^{s}}^{2}\nonumber\\
&&\leq \sum_{i=n+1}^{\infty}\|\Phi_{i} e_{i}\|_{H^{s}}^{2}
+\sum_{i=1}^{N}\|P^{n}\Phi_{1} e_{i}\|_{H^{s}}^{2}
+\sum_{i=N+1}^{n}\|\Phi_{1} e_{i}\|_{H^{s}}^{2}\nonumber\\
&&\leq\sum_{i=N+1}^{\infty}\|\Phi_{i} e_{i}\|_{H^{s}}^{2}
+\sum_{i=1}^{N}\|P^{N}\Phi_{1} e_{i}\|_{H^{s}}^{2}< \epsilon,\label{3.037}
\end{eqnarray}
and we define
\begin{eqnarray}
&&f_{i}:=\int_{0}^{t}U(t-s)\Phi_{1} e_{i}d\beta_{j}(s),
\, P_{n}f_{i}=\int_{0}^{t}U(t-s)P_{n}\Phi_{1} e_{i}d\beta_{i}(s).\label{3.038}\\
&&F_{n}:=\sum_{i=1}^{n}P_{n}\int_{0}^{t}U(t-s)\Phi_{1} e_{i}d\beta_{i}(s)=\sum_{i=1}^{n}P_{n}f_{i}.\label{3.039}
\end{eqnarray}
Next, for each fixed $n\geq N$, $n\in\mathbf{N}^{+}$,
we establish that $\|F_{n}\|_{H^{s}}$ is continuous with respect to $t\in[0,1]$. We note that
\begin{eqnarray}
&&\left\|\sum_{i=1}^{n}P_{n}f_{i}(t)-\sum_{i=1}^{n}P_{n}f_{i}(t_{0})\right\|_{H^{s}}\leq n^{\frac{1}{2}}\left(\sum_{i=1}^{n}\left\|P_{n}f_{i}(t)-P_{n}f_{i}(t_{0})\right\|_{H^{s}}^{2}\right)^{\frac{1}{2}}\nonumber\\
&&=n^{\frac{1}{2}}\left(\sum_{i=1}^{n}\int_{|\xi|\leq n}\langle\xi\rangle^{2s}
\left|\int_{0}^{t}e^{i(t-s)\xi^{3}}\mathscr{F}_{x}\Phi_{1} e_{i}(\xi)
d\beta_{i}(s)-\int_{0}^{t_{0}}e^{i(t_{0}-s)\xi^{3}}\mathscr{F}_{x}\Phi_{1} e_{i}(\xi)
d\beta_{i}(s)\right|^{2}d\xi\right)^{\frac{1}{2}}\nonumber\\
&&=n^{\frac{1}{2}}\left(\sum_{i=1}^{n}\int_{|\xi|\leq n}\langle\xi\rangle^{2s}
\left|\mathscr{F}_{x}\Phi_{1} e_{i}(\xi)\right|^{2}\left|\int_{0}^{t}e^{i(t-s)\xi^{3}}
d\beta_{i}(s)-\int_{0}^{t_{0}}e^{i(t_{0}-s)\xi^{3}}
d\beta_{i}(s)\right|^{2}d\xi\right)^{\frac{1}{2}}.\label{3.040}
\end{eqnarray}
Applying Lemma 3.3 and the triangle inequality, we obtain
\begin{eqnarray}
&&\left|\int_{0}^{t}e^{i(t-s)\xi^{3}}
d\beta_{i}(s)-\int_{0}^{t_{0}}e^{i(t_{0}-s)\xi^{3}}
d\beta_{i}(s)\right|\nonumber\\
&&=\left|e^{it\xi^{3}}\left[e^{-it\xi^{3}}\beta_{i}(t)+i\xi^{3}\int_{0}^{t}\beta_{i}(s)e^{-is\xi^{3}}d\xi\right]
-e^{it_{0}\xi^{3}}\left[e^{-it_{0}\xi^{3}}\beta_{i}(t)
+i\xi^{3}\int_{0}^{t_{0}}\beta_{i}(s)e^{-is\xi^{3}}d\xi\right]\right|\nonumber\\
&&\leq \Big|\beta_{i}(t)-\beta_{i}(t_{0})\Big|\nonumber\\
&&\,+|\xi|^{3}\left|e^{it\xi^{3}}\int_{0}^{t}\beta_{i}(s)e^{-is\xi^{3}}ds-
e^{it\xi^{3}}\int_{0}^{t_{0}}\beta_{i}(s)e^{-is\xi^{3}}ds\right|\nonumber\\
&&\,+|\xi|^{3}\left|e^{it\xi^{3}}\int_{0}^{t_{0}}\beta_{i}(s)e^{-is\xi^{3}}ds-
e^{it_{0}\xi^{3}}\int_{0}^{t_{0}}\beta_{i}(s)e^{-is\xi^{3}}ds\right|\nonumber\\
&&=\Big|\beta_{i}(t)-\beta_{i}(t_{0})\Big|+|\xi|^{3}\left|\int_{t_{0}}^{t}\beta_{i}(s)e^{-is\xi^{3}}ds\right|
+|\xi|^{3}\left|e^{i(t-t_{0})\xi^{3}}-1\right|\left|\int_{0}^{t_{0}}\beta_{i}(s)e^{-is\xi^{3}}ds\right|\nonumber\\
&&\leq \Big|\beta_{i}(t)-\beta_{i}(t_{0})\Big|+C(\omega)(N+n)^{3}|t-t_{0}|+C(\omega)n^{6}|t-t_{0}|\nonumber\\
&&=\left|\beta_{i}(t)-\beta_{i}(t_{0})\right|+C(\omega)n^{3}(1+n^{3})|t-t_{0}|,\label{3.041}
\end{eqnarray}
where $C(\omega)=\sup\limits_{t\in[0,1]} |\beta_{i}(t)|$.
It follows from \eqref{3.040} and \eqref{3.041} that
\begin{eqnarray}
&&\left\|\sum_{i=1}^{n}P_{n}f_{i}(t)-\sum_{i=1}^{n}P_{n}f_{i}(t_{0})\right\|_{H^{s}}\nonumber\\
&&\leq Cn^{\frac{1}{2}}\left(\left|\beta_{i}(t)-\beta_{i}(t_{0})\right|+C(\omega)n^{3}(1+n^{3})|t-t_{0}|\right)\nonumber\\
&&\quad\times\left(\sum_{i=1}^{n}\int_{|\xi|\leq n}\langle\xi\rangle^{2s}
\left|\mathscr{F}_{x}\Phi_{1} e_{i}(\xi)\right|^{2}d\xi\right)^{\frac{1}{2}}\nonumber\\
&&\leq Cn^{\frac{1}{2}}\left(\left|\beta_{i}(t)-\beta_{i}(t_{0})\right|+C(\omega)n^{3}(1+n^{3})|t-t_{0}|\right)\nonumber\\
&&\quad\times\left(\sum_{i=1}^{\infty}\|\Phi_{1} e_{i}\|_{H^{s}}^{2}\right)^{\frac{1}{2}}.\label{3.042}
\end{eqnarray}
Since $\beta_{i}(t)$ is a Brownian motion, for any $\epsilon>0$, there exists  $\delta>0$, such that when $$|t-t_{0}|<\delta<\frac{\epsilon}{2Cn^{\frac{1}{2}}C(\omega)n^{3}(1+n^{3})
\left(\sum\limits_{i=1}^{\infty}\|\Phi_{1} e_{i}\|_{H^{s}}^{2}\right)^{\frac{1}{2}}},$$
we have
\begin{eqnarray}
&&\left|\beta_{i}(t)-\beta_{i}(t_{0})\right|<\frac{\epsilon}{2Cn^{\frac{1}{2}}\left(\sum\limits_{i=1}^{\infty}\|\Phi_{1} e_{i}\|_{H^{s}}^{2}\right)^{\frac{1}{2}}}.\label{3.043}
\end{eqnarray}
On the other hand, when
$$|t-t_{0}|<\frac{\epsilon}{2Cn^{\frac{1}{2}}C(\omega)n^{3}(1+n^{3})
\left(\sum\limits_{i=1}^{\infty}\|\Phi_{1} e_{i}\|_{H^{s}}^{2}\right)^{\frac{1}{2}}},$$
we have
\begin{eqnarray}
&&Cn^{\frac{1}{2}}\left(C(\omega)n^{3}(1+n^{3})|t-t_{0}|\right)\left(\sum\limits_{i=1}^{\infty}\|\Phi_{1} e_{i}\|_{H^{s}}^{2}\right)^{\frac{1}{2}}<\frac{\epsilon}{2}.\label{3.044}
\end{eqnarray}
Therefore, combining \eqref{3.042}, \eqref{3.043} with \eqref{3.044}, we have that for any $\epsilon>0$, there exists
$$0<\delta<\frac{\epsilon}{2Cn^{\frac{1}{2}}
C(\omega)n^{3}(1+n^{3})\left(\sum\limits_{i=1}^{\infty}\|\Phi_{1} e_{i}\|_{H^{s}}^{2}\right)^{\frac{1}{2}}},$$
such that when  $|t-t_{0}|<\delta$, we have
\begin{eqnarray}
&&\left\|\sum_{i=1}^{n}P_{n}f_{i}(t)-\sum_{i=1}^{n}P_{n}f_{i}(t_{0})\right\|_{H^{s}}<\frac{\epsilon}{2}+\frac{\epsilon}{2}=\epsilon.\label{3.045}
\end{eqnarray}
From \eqref{3.039} and \eqref{3.045}, we have
\begin{eqnarray}
\mathbb{P}\left(\left\{\omega: t, t_{0}\in [0, T_{\omega}]\subset [0,1], \lim_{t\rightarrow t_{0}}\left\|F_{n}(t,\omega,\cdot)-F_{n}(t_{0},\omega,\cdot)\right\|_{H^{s}}=0\right\}\right)=1.\label{3.046}
\end{eqnarray}

\noindent By using Lemma 3.4, \eqref{3.037} and Chebyshev's inequality, for any $\alpha>0$(fixed), we have
\begin{eqnarray}
&&\mathbb{P}\left(\left\{\omega:\sup_{t\in [0,1]}\|w_{1}-F_{n}\|_{H^{s}}>\alpha\right\}\right)\nonumber\\
&&\leq C\frac{\mathbb{E}\left(\sup\limits_{t\in[0,1]}\|w_{1}-F_{n}\|_{H^{s}}^{2}\right)}{\alpha^{2}}\nonumber\\
&&\leq\frac{C\left(\sum\limits_{i=n+1}^{\infty}\|\Phi_{1} e_{i}\|_{H^{s}}^{2}
+\sum\limits_{i=1}^{n}\|P^{n}\Phi_{1} e_{i}\|_{H^{s}}^{2}\right)}{\alpha^{2}}\nonumber\\
&&<\frac{C\epsilon}{\alpha^{2}}.\label{3.047}
\end{eqnarray}
From \eqref{3.047}, we have
\begin{eqnarray}
&&\lim_{n\rightarrow\infty}\mathbb{P}\left(\left\{\omega:\sup_{t\in [0,1]}\|w_{1}-F_{n}\|_{H^{s}}>\alpha\right\}\right)=0
.\label{3.048}
\end{eqnarray}
By using \eqref{3.048}, we have
\begin{eqnarray}
&&\lim_{n\rightarrow\infty}\mathbb{P}\left(\left\{\omega:\sup_{t\in [0,T_{\omega}]}\|w_{1}-F_{n}\|_{H^{s}}>\alpha\right\}\right)
=0.\label{3.049}
\end{eqnarray}
It follows from \eqref{3.049} that
\begin{eqnarray}
&&\sup_{t\in [0,T_{\omega}]}\|w_{1}-F_{n}\|_{H^{s}}\overset{\mathbb{P}}{\longrightarrow}0.\label{3.050}
\end{eqnarray}
From \eqref{3.050}, there exists a subsequence $F_{n_{k}}$ such that
\begin{eqnarray}
&&\mathbb{P}\left(\left\{\omega:\lim_{k\rightarrow\infty}\sup_{t\in [0,T_{\omega}]}\|w_{1}-F_{n_{k}}\|_{H^{s}}=0\right\}\right)=1.\label{3.051}
\end{eqnarray}
From \eqref{3.046} and \eqref{3.051}, we have
\begin{eqnarray}
&&\mathbb{P}\left(A\right)=1,\label{3.052}
\end{eqnarray}
where
\begin{eqnarray*}
&&A=\left\{\omega:\lim_{k\rightarrow\infty}\sup_{t\in [0,T_{\omega}]}\|w_{1}-F_{n_{k}}\|_{H^{s}}=0,\right.\\
&&\left.{\rm and}\, \forall n_{k}\in\Z^{+},\,\lim_{t\rightarrow t_{0}}
\|F_{n_{k}}(t,\omega,\cdot)-F_{n_{k}}(t_{0},\omega,\cdot)\|_{H^{s}}=0\right\}.
\end{eqnarray*}
By using \eqref{3.052} and Lemma 3.5, we have
\begin{eqnarray}
&&\mathbb{P}\left(\left\{\omega: \forall t_{0},t\in [0,T_{\omega}], T_{\omega}\in(0,1), \lim_{t\rightarrow t_{0}}\|w_{1}(t,\omega,\cdot)-w_{1}(t_{0},\omega,\cdot)\|_{H^{s}}=0\right\}\right)\nonumber\\
&&\quad=1.\label{3.053}
\end{eqnarray}

The proof of Lemma 3.6 is completed.

\begin{Lemma}\label{lem3.7}
Let $s\in\R$, $\Phi_{1}\in L_{2}^{0,s}$, $0<b<\frac{1}{2}$ and
\begin{eqnarray*}
&&G(T,t,x,\omega)=\chi_{[0,T]}\int_{0}^{t}U(t-s)\Phi_{1} dW(s).
\end{eqnarray*}
Then, for each fixed $T$,
\begin{eqnarray*}
&&\left\|G(T,\cdot,\cdot,\omega)\right\|_{X_{s,b,\varphi}}
\end{eqnarray*}
is $\mathcal{F}_{T}$ measurable.
\end{Lemma}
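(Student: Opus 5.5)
The plan is to realize $\|G(T,\cdot,\cdot,\omega)\|_{X_{s,b,\varphi}}$ as a limit of random variables that are visibly $\mathcal{F}_{T}$-measurable. The starting observation is that, because of the cutoff $\chi_{[0,T]}$, the function $G(T,t,x,\omega)$ only involves $w_{1}(t)$ for $t\in[0,T]$. Each $w_{1}(t)$ with $t\le T$ is built from the increments $\beta_{i}(s)$, $s\le t\le T$, so it is $\mathcal{F}_{T}$-measurable. What remains is to show that the nonlinear functional $u\mapsto\|\chi_{[0,T]}u\|_{X_{s,b,\varphi}}$ preserves this measurability, and the proof will do this by approximation.

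First, I use the truncations $F_{n}=\sum_{i=1}^{n}P_{n}f_{i}$ from the proof of Lemma 3.6, and set $G_{n}=\chi_{[0,T]}F_{n}$. Writing $S(-t)$ as in Lemma 2.8 gives
\begin{eqnarray*}
\|G_{n}\|_{X_{s,b,\varphi}}^{2}=\int_{|\xi|\le n}\langle\xi\rangle^{2s}\Big\|\chi_{[0,T]}(t)\sum_{i=1}^{n}\mathscr{F}_{x}\Phi_{1}e_{i}(\xi)\int_{0}^{t}e^{-is\xi^{3}}d\beta_{i}(s)\Big\|_{H_{t}^{b}}^{2}d\xi .
\end{eqnarray*}
By Lemma 3.3, each stochastic integral equals $e^{-it\xi^{3}}\beta_{i}(t)+i\xi^{3}\int_{0}^{t}\beta_{i}(s)e^{-is\xi^{3}}ds$. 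This is a pathwise continuous functional of $(\beta_{i}(s))_{s\le T}$, jointly measurable in $(\xi,t,\omega)$. The $H_{t}^{b}$ norm can be written with the Gagliardo double integral from \eqref{2.024}, which is a nonnegative measurable integrand. Tonelli's theorem then shows that $\|G_{n}\|_{X_{s,b,\varphi}}$ is $\mathcal{F}_{T}$-measurable, possibly with values in $[0,\infty]$.

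Next, I show that $\|G_{n}\|_{X_{s,b,\varphi}}\to\|G\|_{X_{s,b,\varphi}}$ along a subsequence almost surely. By Lemma 2.6 (so $0\le b<\tfrac12$ is used), $\|\chi_{[0,T]}(w_{1}-F_{n})\|_{X_{s,b,\varphi}}\le C\|\psi(w_{1}-F_{n})\|_{X_{s,b,\varphi}}$, with $T<1$ and $\psi=1$ on $[0,1]$. Applying Lemma 3.1 to the operator $\Phi_{1}-\Phi_{1}^{(n)}$, where $\Phi_{1}^{(n)}e_{i}=P_{n}\Phi_{1}e_{i}$ for $i\le n$ and $0$ otherwise, gives
\begin{eqnarray*}
\mathbb{E}\|G-G_{n}\|_{X_{s,b,\varphi}}^{2}\le C\Big(\sum_{i>n}\|\Phi_{1}e_{i}\|_{H^{s}}^{2}+\sum_{i\le n}\|(I-P_{n})\Phi_{1}e_{i}\|_{H^{s}}^{2}\Big)\to0 .
\end{eqnarray*}
This tends to zero by dominated convergence and the Hilbert–Schmidt property. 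The estimate \eqref{3.037} plays the same role if one uses the paper's convention for $P^{n}$. Chebyshev's inequality then gives convergence in probability, so a subsequence converges $\mathbb{P}$-a.s. The triangle inequality transfers this to convergence of the norms.

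Finally, $\|G\|_{X_{s,b,\varphi}}$ is an a.s. limit of $\mathcal{F}_{T}$-measurable random variables. Since the filtration is assumed complete (the usual conditions), it is $\mathcal{F}_{T}$-measurable. I expect the main obstacle to be the first step. One must justify that the $H^{b}_{t}$ norm of the truncated, cut-off process is a measurable functional of the Brownian paths restricted to $[0,T]$. The Gagliardo representation combined with Tonelli handles this, and it avoids invoking any Fourier transform in $t$ of non-adapted quantities.
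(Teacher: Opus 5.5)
Your proposal follows the paper's proof essentially step for step. It uses the same truncation $F_{m}$ (finitely many $e_{i}$, frequencies $|\xi|\le m$), the pathwise formula for $\int_{0}^{t}e^{-is\xi^{3}}d\beta_{i}(s)$ from Lemma 3.3 together with Fubini for measurability of the truncated norm, and then Lemma 3.1, Chebyshev's inequality and an a.s.\ convergent subsequence to pass to $\|G\|_{X_{s,b,\varphi}}$. You also make explicit two points the paper leaves tacit: Lemma 2.6 to replace $\chi_{[0,T]}$ by $\psi$, and the completeness of $\mathcal{F}_{T}$ needed for the a.s.\ limit.

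One small correction concerns your first step. The Gagliardo double integral in \eqref{2.024} is only an \emph{equivalent} norm on $H^{b}_{t}$, so Tonelli applied to it gives measurability of an equivalent quantity, not of $\|G_{n}\|_{X_{s,b,\varphi}}$ itself. The detour is also unnecessary. The time Fourier transform of $\chi_{[0,T]}(t)\sum_{i\le n}\mathscr{F}_{x}\Phi_{1}e_{i}(\xi)\int_{0}^{t}e^{-is\xi^{3}}d\beta_{i}(s)$ is an integral over $t\in[0,T]$ only. Fubini therefore makes it jointly measurable and $\mathcal{F}_{T}$-measurable in $\omega$. Tonelli then applies directly to the exact expression $\int\!\!\int\langle\xi\rangle^{2s}\langle\tau\rangle^{2b}|\cdot|^{2}\,d\tau\,d\xi$, which is what the paper's $J^{b}_{t}$ formulation amounts to.
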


\noindent{\bf Proof.}
Since
\begin{eqnarray}
&&W(t)=\sum_{i=1}^{\infty}\beta_{i}(t)e_{i},\label{3.054}
\end{eqnarray}
it follows from \eqref{3.054} that
\begin{eqnarray}
&&G(T,t,x,\omega)=\chi_{[0,T]}\int_{0}^{t}U(t-s)\Phi_{1}dW(s)\nonumber\\
&&=\sum_{i=1}^{\infty}\chi_{[0,T]}\int_{0}^{t}U(t-s)\Phi_{1} e_{i}d\beta_{i}(s).\label{3.055}
\end{eqnarray}
Since $(e_{i})_{i=1}^{\infty}$ is an orthonormal basis of $L^{2}$,
$\Phi_{1}: L^{2}\rightarrow H^{s}$ is a Hilbert-Schmidt operator, it follows that
\begin{eqnarray}
&&\|\Phi_{1} e_{i}\|_{H^{s}}<\infty,\label{3.056}\\
&&\sum_{i=1}^{\infty}\|\Phi_{1} e_{i}\|_{H^{s}}^{2}<\infty.\label{3.057}
\end{eqnarray}
From \eqref{3.057}, for every $\epsilon>0$, there exists $M_{1}\geq 1$ with $M_{1}\in\mathbb{N}^{+}$, such that
\begin{eqnarray}
&&\sum_{i=M_{1}+1}^{\infty}\|\Phi_{1} e_{i}\|_{H^{s}}^{2}<\frac{\epsilon}{2}.\label{3.058}
\end{eqnarray}
It follows from \eqref{3.056} that for the above $\epsilon>0$, there exists $M_{2}\in\mathbb{N}^{+}$, such that
\begin{eqnarray}
&&\sum_{i=1}^{M_{1}}\|P^{M_{2}}\Phi_{1} e_{i}\|_{H^{s}}^{2}< \frac{\epsilon}{2}.\label{3.059}
\end{eqnarray}
We consider the following cases:

\noindent{\bf Case 1.} When $M_{1}=M_{2}$, from \eqref{3.058} and \eqref{3.059}, we have
\begin{eqnarray}
&&\sum_{i=M_{1}+1}^{\infty}\|\Phi_{1} e_{i}\|_{H^{s}}^{2}+\sum_{i=1}^{M_{1}}\|P^{M_{1}}\Phi_{1} e_{i}\|_{H^{s}}^{2}< \epsilon
.\label{3.060}
\end{eqnarray}
{\bf Case 2.} When $M_{1}>M_{2}$, from \eqref{3.058} and \eqref{3.059}, we have
\begin{eqnarray}
&&\sum_{i=M_{1}+1}^{\infty}\|\Phi_{1} e_{i}\|_{H^{s}}^{2}+\sum_{i=1}^{M_{1}}\|P^{M_{1}}\Phi_{1} e_{i}\|_{H^{s}}^{2}\nonumber\\
&&\leq \sum_{i=M_{1}+1}^{\infty}\|\Phi_{1} e_{i}\|_{H^{s}}^{2}+\sum_{i=1}^{M_{1}}\|P^{M_{2}}\Phi_{1} e_{i}\|_{H^{s}}^{2}< \epsilon
.\label{3.061}
\end{eqnarray}
{\bf Case 3.} When $M_{1}<M_{2}$, from \eqref{3.058} and \eqref{3.059}, we have
\begin{eqnarray}
&&\sum_{i=M_{2}+1}^{\infty}\|\Phi_{1} e_{i}\|_{H^{s}}^{2}+\sum_{i=1}^{M_{2}}\|P^{M_{2}}\Phi_{1} e_{i}\|_{H^{s}}^{2}\nonumber\\
&&\leq \sum_{i=M_{2}+1}^{\infty}\|\Phi_{1} e_{i}\|_{H^{s}}^{2}
+\sum_{i=1}^{M_{1}}\|P^{M_{2}}\Phi_{1} e_{i}\|_{H^{s}}^{2}+\sum_{i=M_{1}+1}^{M_{2}}\|\Phi_{1} e_{i}\|_{H^{s}}^{2}\nonumber\\
&&=\frac{\epsilon}{2}+\sum_{i=M_{1}+1}^{\infty}\|\Phi_{1} e_{i}\|_{H^{s}}^{2}<\epsilon
.\label{3.062}
\end{eqnarray}
Let $M=\max\{M_{1}, M_{2}\}$. Combining \eqref{3.060}, \eqref{3.061} with \eqref{3.062}, we obtain
\begin{eqnarray}
&&\sum_{i=M+1}^{\infty}\|\Phi_{1} e_{i}\|_{H^{s}}^{2}+\sum_{i=1}^{M}\|P^{M}\Phi_{1} e_{i}\|_{H^{s}}^{2}< \epsilon.\label{3.063}
\end{eqnarray}
For the above $M$ and $m\geq M$, $m\in\mathbf{N}^{+}$, we have
\begin{eqnarray}
&&\sum_{i=m+1}^{\infty}\|\Phi_{1} e_{i}\|_{H^{s}}^{2}+\sum_{i=1}^{m}\|P^{m}\Phi_{1} e_{i}\|_{H^{s}}^{2}\nonumber\\
&&\leq \sum_{i=m+1}^{\infty}\|\Phi_{1} e_{i}\|_{H^{s}}^{2}+\sum_{i=1}^{M}\|P^{m}\Phi_{1} e_{i}\|_{H^{s}}^{2}
+\sum_{i=M+1}^{m}\|\Phi_{1} e_{i}\|_{H^{s}}^{2}\nonumber\\
&&\leq\sum_{i=M+1}^{\infty}\|\Phi_{1} e_{i}\|_{H^{s}}^{2}+\sum_{i=1}^{M}\|P^{M}\Phi_{1} e_{i}\|_{H^{s}}^{2}< \epsilon,\label{3.064}
\end{eqnarray}
and we define
\begin{eqnarray*}
&&P_{m}G:=\sum_{i=1}^{\infty}\chi_{[0,T]}\int_{0}^{t}U(t-s)P_{m}\Phi_{1} e_{i}d\beta_{i}(s),\\
&&P^{m}G:=\sum_{i=1}^{\infty}\chi_{[0,T]}\int_{0}^{t}U(t-s)P^{m}\Phi_{1} e_{i}d\beta_{i}(s),\\
&&F_{m}:=\sum_{i=1}^{m}\chi_{[0,T]}\int_{0}^{t}U(t-s)P_{m}\Phi_{1} e_{i}d\beta_{i}(s).
\end{eqnarray*}
Next, for each fixed $m\geq M$, $m\in\mathbf{N}^{+}$,
we prove that $\left\|F_{m}\right\|_{X_{s,b,\varphi}}$ is $\mathcal{F}_{T}$ measurable.

\noindent We denote
\begin{eqnarray}
&&f_{i}:=\chi_{[0,T]}\int_{0}^{t}U(-s)P_{m}\Phi_{1} e_{i}d\beta_{i}(s).\label{3.065}
\end{eqnarray}
From \eqref{3.065}, we have
\begin{eqnarray}
&&\mathscr{F}_{x}f_{i}(\xi,t)
=\chi_{[0,T]}\int_{0}^{t}\chi_{\{|\xi|\leq m\}}e^{-is\xi^{3}}\mathscr{F}_{x}\Phi_{1} e_{i}(\xi)d\beta_{i}(s)\nonumber\\
&&=\chi_{[0,T]}\chi_{\{|\xi|\leq m\}}\mathscr{F}_{x}\Phi_{1} e_{i}(\xi)\int_{0}^{t}e^{-is\xi^{3}}d\beta_{i}(s).\label{3.066}
\end{eqnarray}
Since $e^{-is\xi^{3}}$ is independent of $\omega$ and continuous with respect to $(s,\xi)$, and since
\begin{eqnarray}
&&\left|\frac{d e^{-is\xi^{3}}}{ds}\right|=\left|i\xi^{3}e^{-is\xi^{3}}\right|\leq m^{3},\label{3.067}
\end{eqnarray}
it follows from \eqref{3.067} that $e^{-is\xi^{3}}$ is of bounded variation. Hence, by using Lemma 3.3, we have
\begin{eqnarray}
&&\int_{0}^{t}e^{-is\xi^{3}}d\beta_{i}(s)=e^{-it\xi^{3}}\beta_{i}(t)+i\xi^{3}\int_{0}^{t}e^{-is\xi^{3}}\beta_{i}(s)ds\nonumber\\
&&=e^{-it\xi^{3}}\beta_{i}(t)+i\xi^{3}\int_{0}^{\infty}\chi_{[0,t]}(s)e^{-is\xi^{3}}\beta_{i}(s)ds.\label{3.068}
\end{eqnarray}
By using the definition of complex-valued measurability in \cite{R1987},
$e^{-it\xi^{3}}$ is product-measurable with respect to $(t,\xi)$,
and the Brownian motion $\beta_{i}(t,\omega)$ is product-measurable with respect to $(t,\omega)$. Hence their product
$$e^{-it\xi^{3}}\beta_{i}(t)$$
is product-measurable with respect to $(t,\xi,\omega)$.

\noindent Since $\chi_{[0,t]}(s)$ is product-measurable with respect to $(t,s)$, $e^{-is\xi^{3}}$
is product-measurable with respect to $(s,\xi)$, and the Brownian motion $\beta_{i}(s,\omega)$ is product-measurable with respect to $(s,\omega)$,
it follows that
$$\chi_{[0,t]}(s)e^{-is\xi^{3}}\beta_{i}(s)$$
is product-measurable with respect to $(s,t,\xi,\omega)$. Consequently, by Fubini's theorem, we have that
$$\xi^{3}\int_{0}^{\infty}\chi_{[0,t]}(s)e^{-is\xi^{3}}\beta_{i}(s)ds$$
is product-measurable with respect to $(t,\xi,\omega)$.

\noindent From the above discussion, we have that
$$\int_{0}^{t}e^{-is\xi^{3}}d\beta_{i}(s)$$
is product-measurable with respect to $(t,\xi,\omega)$. We denote
$$\int_{0}^{t}e^{-is\xi^{3}}d\beta_{i}(s):=u_{i}(t,\xi,\omega).$$
Note that
\begin{eqnarray}
&&\left\|\chi_{[0,T]}F_{m}\right\|_{X_{s,b,\varphi}}^{2}=\left\|\sum_{i=1}^{m}f_{i}\right\|_{H_{x}^{s}H_{t}^{b}}^{2}
=\int_{\SR}\langle\xi\rangle^{2s}\left\|\sum_{i=1}^{m}\mathscr{F}_{x}f_{i}(\xi,\cdot)\right\|_{H_{t}^{b}}^{2}d\xi\nonumber\\
&&=\int_{|\xi|\leq m}\int_{\SR}\langle\xi\rangle^{2s}\left|\sum_{i=1}^{m}J_{t}^{b}\chi_{[0,T]}\mathscr{F}_{x}\Phi_{1} e_{i}(\xi)u_{i}(t,\xi,\omega)\right|^{2}dtd\xi\nonumber\\
&&=\int_{\SR}\int_{\SR}\chi_{\{|\xi|\leq m\}}\left|\sum_{i=1}^{m}\langle\xi\rangle^{s}\mathscr{F}_{x}\Phi_{1} e_{i}(\xi)J_{t}^{b}\chi_{[0,T]}u_{i}(t,\xi,\omega)\right|^{2} dtd\xi.\label{3.069}
\end{eqnarray}
By using  $\langle\xi\rangle^{s}\mathscr{F}_{x}\Phi_{1} e_{i}(\xi)\in L^{2}(\xi)$ and the definition of $L^{p}$ in
\cite{R1987}, we obtain that $\langle\xi\rangle^{s}\mathscr{F}_{x}\Phi_{1} e_{i}(\xi)$ is measurable as a function of $\xi$, and
since $u_{i}$ is measurable with respect to $(t,\xi,\omega)$, it follows that
$$\chi_{\{|\xi|\leq m\}}\left|\sum_{i=1}^{m}\langle\xi\rangle^{s}\mathscr{F}_{x}\Phi_{1} e_{i}(\xi)J_{t}^{b}\chi_{[0,T]}u_{i}(t,\xi,\omega)\right|^{2}$$
is product-measurable with respect to $(t,\xi,\omega)$. By using Fubini's theorem, we conclude that
\begin{eqnarray*}
&&\int_{\SR}\int_{\SR}\chi_{\{|\xi|\leq m\}}\left|\sum_{i=1}^{m}\langle\xi\rangle^{s}\mathscr{F}_{x}\Phi_{1} e_{i}(\xi)J_{t}^{b}\chi_{[0,T]}u_{i}(t,\xi,\omega)\right|^{2} dtd\xi
\end{eqnarray*}
is $\mathcal{F}_{T}$ measurable with respect to $\omega$.
From \eqref{3.069}, we have that $\left\|F_{m}\right\|_{X_{s,b,\varphi}}$ is $\mathcal{F}_{T}$ measurable.

\noindent By using \eqref{3.064} and Lemma 3.1, we have
\begin{eqnarray}
&&\mathbb{E}\left(\left\|G-F_{m}\right\|_{X_{s,b,\varphi}}^{2}\right)\nonumber\\
&&=\mathbb{E}\left(\left\|\sum_{i=1}^{\infty}\chi_{[0,T]}\int_{0}^{t}U(t-s)P^{m}\Phi_{1} e_{i}d\beta_{i}(s)
+\sum_{i=m+1}^{\infty}\chi_{[0,T]}\int_{0}^{t}U(t-s)P_{m}\Phi_{1} e_{i}d\beta_{i}(s)\right\|_{X_{s,b,\varphi}}^{2}\right)\nonumber\\
&&\leq \mathbb{E}\left(\left\|\sum_{i=1}^{m}\chi_{[0,T]}
\int_{0}^{t}U(t-s)P^{m}\Phi_{1} e_{i}d\beta_{i}(s)\right\|_{X_{s,b,\varphi}}^{2}\right)\nonumber\\
&&\qquad+\mathbb{E}\left(\left\|\sum_{i=m+1}^{\infty}\chi_{[0,T]}
\int_{0}^{t}U(t-s)\Phi_{1} e_{i}d\beta_{i}(s)\right\|_{X_{s,b,\varphi}}^{2}\right)\nonumber\\
&&\leq C\left(\sum_{i=1}^{m}\|P^{m}\Phi_{1} e_{i}\|_{H^{s}}^{2}
+\sum_{i=m+1}^{\infty}\|\Phi_{1} e_{i}\|_{H^{s}}^{2}\right)\nonumber\\
&&<C\epsilon.\label{3.070}
\end{eqnarray}
By using \eqref{3.070} and Chebyshev's inequality, for any $\alpha>0$(fixed), we have
\begin{eqnarray}
&&\mathbb{P}\left(\left\{\omega:\left|\left\|F_{m}\right\|_{X_{s,b}}
-\left\|G\right\|_{X_{s,b,\varphi}}\right|>\alpha\right\}\right)\nonumber\\
&&\leq \mathbb{P}\left(\left\{\omega:\left\|F_{m}-G\right\|_{X_{s,b,\varphi}}>\alpha\right\}\right)\nonumber\\
&&\leq \frac{\mathbb{E}\left(\left\|F_{m}-G\right\|_{X_{s,b,\varphi}}^{2}\right)}{\alpha^{2}}\nonumber\\
&&<C\frac{\epsilon}{\alpha^{2}}.\label{3.071}
\end{eqnarray}
From \eqref{3.071}, we have
\begin{eqnarray}
&&\left\|F_{m}\right\|_{X_{s,b,\varphi}}\overset{\mathbb{P}}{\longrightarrow}\|G\|_{X_{s,b,\varphi}}.\label{3.072}
\end{eqnarray}
From \eqref{3.072}, there exists a subsequence $\left\|F_{m_{k}}\right\|_{X_{s,b,\varphi}}$ converging to $\|G\|_{X_{s,b,\varphi}}$.
Since $\|F_{m_{k}}\|_{X_{s,b,\varphi}}$ is $\mathcal{F}_{T}$ measurable,
it follows that $\|G\|_{X_{s,b,\varphi}}$ is $\mathcal{F}_{T}$ measurable.

The proof of Lemma 3.7 is completed.

\begin{Lemma}\label{lem3.8}
Let $s\in\R$, $\Phi_{2}\in L_{2}^{0,s}$. Then, we have
\begin{eqnarray*}
&&\mathbb{P}\left(\left\{\omega: \forall t_{0}, t\in [0,T_{\omega}], T_{\omega}\in(0,1), \lim_{t\rightarrow t_{0}}
\left\|w_{2}(t,\omega,\cdot)-w_{2}(t_{0},\omega,\cdot)\right\|_{H^{s}}=0\right\}\right)\nonumber\\
&&\quad=1,
\end{eqnarray*}
where
\begin{eqnarray*}
&&w_{2}=\int_{0}^{t}V(t-s)\Phi_{2}dW(s).
\end{eqnarray*}

\end{Lemma}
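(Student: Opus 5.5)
The argument follows the proof of Lemma 3.6 line by line, with $U(t)$ replaced by $V(t)$ and the phase $\xi^{3}$ replaced by $\phi(\xi)=-\nu\xi|\xi|+\xi^{3}$. There are three steps: a Lemma 3.4 type bound for $w_{2}$, continuity in time of frequency-truncated finite-rank approximations, and Lemma 3.5 to pass continuity to the limit.

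\textbf{Step 1 (maximal estimate).} Define $w_{2}^{N}$ and $P^{N}w_{2N}$ exactly as in \eqref{3.01}--\eqref{3.02}, with $V$ in place of $U$. Since $V(t)$ is unitary on $H^{s}$, we have $\|w_{2}^{N}(t)\|_{H^{s}}=\|V(-t)w_{2}^{N}(t)\|_{H^{s}}$, and $V(-t)w_{2}^{N}(t)$ is an It\^{o} integral. Applying It\^{o}'s formula to $\|\cdot\|_{L^{2}}^{2}$, then the BDG inequality and Young's inequality as in \eqref{3.012}--\eqref{3.017}, gives
\[
\mathbb{E}\Big(\sup_{t\in[0,1]}\|w_{2}^{N}\|_{H^{s}}^{2}\Big)\leq C\sum_{i>N}\|\Phi_{2}e_{i}\|_{H^{s}}^{2},
\]
and the analogous bound with $P^{N}$. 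The computation is unchanged because only unitarity of the group is used.

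\textbf{Step 2 (choice of $N$ and continuity of $F_{n}$).} Pick $N$ as in \eqref{3.031}--\eqref{3.037}, now with $\Phi_{2}$, so that
\[
\sum_{i>n}\|\Phi_{2}e_{i}\|_{H^{s}}^{2}+\sum_{i\leq n}\|P^{n}\Phi_{2}e_{i}\|_{H^{s}}^{2}<\epsilon \quad\text{for all } n\geq N.
\]
Set $F_{n}=\sum_{i=1}^{n}P_{n}\int_{0}^{t}V(t-s)\Phi_{2}e_{i}\,d\beta_{i}(s)$. To show $t\mapsto F_{n}(t)\in H^{s}$ is continuous almost surely, use Lemma 3.3 with $f(s)=e^{-is\phi(\xi)}$. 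This is admissible: $f$ is deterministic, continuous, and of bounded variation, since $|\phi'(\xi)|\leq 3n^{2}+2|\nu|n$ on $|\xi|\leq n$. The lemma rewrites $\int_{0}^{t}e^{i(t-s)\phi(\xi)}d\beta_{i}(s)$ through $\beta_{i}(t)$ and a Lebesgue integral. The estimate \eqref{3.041} then holds with $|\xi|^{3}$ replaced by $|\phi(\xi)|\leq n^{3}+|\nu|n^{2}$. This gives
\[
\|F_{n}(t)-F_{n}(t_{0})\|_{H^{s}}\leq C_{n}\Big(\max_{i\leq n}|\beta_{i}(t)-\beta_{i}(t_{0})|+C(\omega)|t-t_{0}|\Big)\|\Phi_{2}\|_{L_{2}^{0,s}},
\]
where $C(\omega)=\max_{i\leq n}\sup_{[0,1]}|\beta_{i}|$, which is finite almost surely. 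Continuity of Brownian paths then yields the analogue of \eqref{3.046}.

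\textbf{Step 3 (passage to the limit).} Step 1 applied to $w_{2}-F_{n}=w_{2}^{n}+P^{n}w_{2n}$, together with Chebyshev's inequality, gives $\sup_{t\in[0,1]}\|w_{2}-F_{n}\|_{H^{s}}\to0$ in probability. Extract a subsequence converging almost surely, uniformly on $[0,1]\supset[0,T_{\omega}]$. Intersecting this with the countably many full-measure continuity events for $F_{n_{k}}$ gives $\mathbb{P}(A)=1$, and Lemma 3.5 concludes the proof.

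The main obstacle is Step 2. The BV and integration-by-parts argument requires a bound on the dispersion relation on the frequency cutoff, and for $\phi$ this holds only after the truncation $P_{n}$; it fails without it. One must also take the $\max_{i\leq n}$ carefully, since \eqref{3.041}--\eqref{3.042} implicitly use a single $i$.
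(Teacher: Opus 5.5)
Your proposal is correct and is exactly the argument the paper intends: the paper proves Lemma 3.8 only by pointing to the proof of Lemma 3.6. Your three steps (the Lemma 3.4 bound via unitarity of $V(t)$, integration by parts with $\phi(\xi)$ in place of $\xi^{3}$ on $|\xi|\leq n$, then Lemma 3.5) are that transcription, and you handle the sum over $i$ in \eqref{3.042} more carefully than the paper does. One small slip: bounded variation of $s\mapsto e^{-is\phi(\xi)}$ is governed by its $s$-derivative, i.e.\ by $|\phi(\xi)|$ rather than $|\phi'(\xi)|$, and it holds for each fixed $\xi$ with no cutoff; the truncation $|\xi|\leq n$ is needed only for the Lipschitz bound uniform in $\xi$, exactly as for $\xi^{3}$ in Lemma 3.6, so it is not an obstacle specific to $\phi$.
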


By using a similar argument as in the proof of Lemma 3.6, we have that Lemma 3.8 is valid.

\begin{Lemma}\label{lem3.9}
Let $s\in\R$, $\Phi_{2}\in L_{2}^{0,s}$, $0<b<\frac{1}{2}$.
Then, for each fixed $T$,
\begin{eqnarray*}
&&\left\|\chi_{[0,T]}\int_{0}^{t}V(t-s)\Phi_{2} dW(s)\right\|_{X_{s,b,\phi}}
\end{eqnarray*}
is $\mathcal{F}_{T}$ measurable.
\end{Lemma}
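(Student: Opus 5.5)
We will follow the proof of Lemma 3.7 step by step, with the group $U(t)$ and the phase $\varphi(\xi)=\xi^{3}$ replaced by $V(t)$ and $\phi(\xi)=-\nu\xi|\xi|+\xi^{3}$. Lemma 3.2 will play the role of Lemma 3.1.

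\textbf{Step 1: finite-dimensional approximation.} Set $G(T,t,x,\omega)=\chi_{[0,T]}\int_{0}^{t}V(t-s)\Phi_{2}dW(s)$ and expand $W=\sum_i\beta_ie_i$. Since $\Phi_{2}\in L_{2}^{0,s}$, the same three-case argument as in \eqref{3.058}--\eqref{3.064} gives the following: for every $\epsilon>0$ there is $M$ such that for all $m\geq M$,
$$\sum_{i>m}\|\Phi_{2}e_i\|_{H^s}^2+\sum_{i\le m}\|P^{m}\Phi_{2}e_i\|_{H^s}^2<\epsilon.$$
We then define
$$F_m=\sum_{i=1}^{m}\chi_{[0,T]}\int_0^tV(t-s)P_m\Phi_{2}e_i\,d\beta_i(s).$$

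\textbf{Step 2: measurability of $\|F_m\|_{X_{s,b,\phi}}$.} Conjugating by $V(-t)$, the spatial Fourier transform of each summand is
$$\chi_{[0,T]}(t)\,\chi_{\{|\xi|\le m\}}\,\mathscr{F}_x\Phi_{2}e_i(\xi)\,u_i(t,\xi,\omega),\qquad u_i=\int_0^te^{-is\phi(\xi)}d\beta_i(s).$$
For $|\xi|\le m$ the map $s\mapsto e^{-is\phi(\xi)}$ is deterministic and smooth, with $|\partial_s e^{-is\phi(\xi)}|=|\phi(\xi)|\le |\nu|m^2+m^3$. It is therefore of bounded variation, and Lemma 3.3 gives
$$u_i=e^{-it\phi(\xi)}\beta_i(t)+i\phi(\xi)\int_0^\infty\chi_{[0,t]}(s)e^{-is\phi(\xi)}\beta_i(s)\,ds.$$
The function $\phi$ is continuous, so every factor here is product measurable in $(s,t,\xi,\omega)$. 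By Fubini's theorem $u_i$ is then product measurable in $(t,\xi,\omega)$, and only $\beta_i$ on $[0,T]$ enters once we multiply by $\chi_{[0,T]}$. Writing $\|F_m\|^2_{X_{s,b,\phi}}$ as the double integral in \eqref{3.069} and applying Fubini again shows that it is $\mathcal F_T$-measurable.

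\textbf{Step 3: passage to the limit.} By Lemma 3.2 (with $\chi_{[0,T]}$ in place of $\psi$) and Step 1,
$$\mathbb E\|G-F_m\|^2_{X_{s,b,\phi}}\le C\epsilon.$$
Chebyshev's inequality then gives $\|F_m\|_{X_{s,b,\phi}}\to\|G\|_{X_{s,b,\phi}}$ in probability. Extracting an a.s. convergent subsequence, $\|G\|_{X_{s,b,\phi}}$ is an a.s. limit of $\mathcal F_T$-measurable variables, hence $\mathcal F_T$-measurable (after completing the filtration).

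\textbf{Main obstacle.} The delicate point is Step 3: Lemma 3.2 is stated with the smooth cutoff $\psi$, not with the sharp cutoff $\chi_{[0,T]}$. To bridge this we will write $\chi_{[0,T]}=\chi_{[0,T]}\psi$ for $T\le1$. Since $b<\frac12$, Lemma 2.6 shows that multiplication by $\chi_{[0,T]}$ is bounded on $X_{s,b,\phi}$ uniformly in $T$, and this transfers the bound of Lemma 3.2 to $\chi_{[0,T]}$.
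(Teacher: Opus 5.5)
Your proposal is correct and follows the paper's route. The paper proves Lemma 3.9 only by pointing to the proof of Lemma 3.7, and you carry out exactly that scheme with $\phi$ in place of $\xi^{3}$ and Lemma 3.2 in place of Lemma 3.1. Your added remarks on adaptedness over $[0,T]$, on completeness of the filtration, and on passing from $\psi$ to $\chi_{[0,T]}$ via Lemma 2.6 fill in points the paper leaves implicit; that last step is valid for every real $s$ because the cutoff acts only in $t$, and for $T>1$ it works after replacing $\psi$ by $\psi(t/T)$.
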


By using a similar argument as in the proof of Lemma 3.7, we have that Lemma 3.9 is valid.

\bigskip

\bigskip

\section{Trilinear estimates}

\setcounter{equation}{0}

\setcounter{Theorem}{0}

\setcounter{Lemma}{0}

\setcounter{Proposition}{0}

\setcounter{section}{4}

In this section, we prove the trilinear estimates, which play an important role in proving Theorems 1.1-1.7.

\begin{Lemma}\label{lem4.1}
Let $s\geq\frac{1}{4}$,\, $0\leq a\leq
\min\left\{2s-\frac{1}{2},1-64\epsilon\right\}$,\,
$b_{1}=\frac{1}{2}-3\epsilon$,\,$0<\epsilon\leq\frac{1}{2026}$.
Then,  we have
\begin{eqnarray}
&&\|\partial_{x}(u^{3})\|_{X_{s+a, -b_{1},\varphi}}\leq C\|u\|_{X_{s,b_{1},\varphi}}^{3}.\label{4.01}
\end{eqnarray}

\end{Lemma}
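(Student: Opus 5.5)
We argue by duality and a frequency decomposition. By Plancherel, \eqref{4.01} is equivalent to bounding
\begin{eqnarray*}
\int_{*}\frac{|\xi|\langle\xi\rangle^{s+a}\,\bar{h}(\xi,\tau)\prod_{j=1}^{3}F_{j}(\xi_{j},\tau_{j})}{\langle\sigma\rangle^{b_{1}}\prod_{j=1}^{3}\langle\xi_{j}\rangle^{s}\langle\sigma_{j}\rangle^{b_{1}}}
\leq C\|h\|_{L^{2}_{\xi\tau}}\prod_{j=1}^{3}\|F_{j}\|_{L^{2}_{\xi\tau}},
\end{eqnarray*}
where $\xi=\xi_{1}+\xi_{2}+\xi_{3}$, $\tau=\tau_{1}+\tau_{2}+\tau_{3}$, $\sigma=\tau-\xi^{3}$, $\sigma_{j}=\tau_{j}-\xi_{j}^{3}$, and $F_{j}\geq0$. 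By symmetry we may assume $|\xi_{1}|\geq|\xi_{2}|\geq|\xi_{3}|$. We then decompose dyadically, $|\xi_{j}|\sim N_{j}$ and $|\xi|\sim N$, and treat the following regions separately. \textbf{(i)} $|\xi_{1}|\lesssim1$: here all weights are bounded and H\"older with \eqref{2.04} four times suffices, since $\frac{2}{3}b<b_{1}$. \textbf{(ii)} High--low--low, $N_{1}\gg N_{2}$: then $N\sim N_{1}$. \textbf{(iii)} At least two comparable high frequencies. The standard algebraic identity
\begin{eqnarray*}
\sigma-\sigma_{1}-\sigma_{2}-\sigma_{3}=-3(\xi_{1}+\xi_{2})(\xi_{2}+\xi_{3})(\xi_{1}+\xi_{3})
\end{eqnarray*}
gives $\max\{\langle\sigma\rangle,\langle\sigma_{j}\rangle\}\gtrsim|(\xi_{1}+\xi_{2})(\xi_{2}+\xi_{3})(\xi_{1}+\xi_{3})|$, and this will be used to absorb derivatives.

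In region (ii) we use the bilinear smoothing of Corollary \ref{cor2.1}, pairing the high frequency with one low frequency. With the choice $s'=\frac12$ this gains $N_{1}^{-1}$; we apply it both to $u_{1}u_{2}$ and to $u_{1}u_{3}$ (splitting $h$ or placing the dual function on the high side), or alternatively pair $\bar h$ with $u_{2}$, since $\bar h$ also lives at frequency $\sim N_{1}$. Then $\|fg\|_{L^{2}}\,\|kl\|_{L^{2}}$ gives a gain of $N_{1}^{-2}$ at the price of modulation exponents $\frac{3+2s'}{4}b=b$. These are too large, so we interpolate: we take $s'=\frac12-\delta$ with $\frac{3+2s'}{4}b\le b_{1}$, which requires $\delta\gtrsim\epsilon$ (roughly $\delta=8\epsilon$). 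This yields the gain $N_{1}^{-2+c\epsilon}$, which must beat $N_{1}^{1+a}\langle\xi_{2}\rangle^{-s}\langle\xi_{3}\rangle^{-s}$. When the low frequencies are genuinely low this is fine; otherwise $N_{2}^{-s}$ helps. The function $h$ only carries $\langle\sigma\rangle^{-b_{1}}$, so when we place $\bar h$ in a bilinear estimate we use the $X_{0,b_1}$-type version, which is admissible by the choice of $\delta$. Balancing the exponents forces $a\le 1-C\epsilon$, and this is where the constraint $a\le1-64\epsilon$ originates.

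In region (iii), with $N_{1}\sim N_{2}$, the identity above implies that either some modulation is $\gtrsim N_{1}^{2}|\xi_{1}+\xi_{3}|$-type, or we are at the resonant set. When a modulation is large we gain $\langle\sigma_{\max}\rangle^{b_{1}}$, and we control the remaining product by the $L^{6}$ Strichartz estimate \eqref{2.01} and the $L^4$ estimate \eqref{2.02} with their $D_x^{1/6}$ and $D_x^{1/8}$ gains, together with \eqref{2.03}. Here the derivative count $1+a$ against the gains $N_{1}^{-2s}\cdot N_{1}^{3b_{1}}\cdot(\text{Strichartz})$ is exactly where $a\le2s-\frac12$ appears: the worst case is the high--high--high configuration with output $|\xi|\sim N_{1}$, where the weight $\langle\xi\rangle^{s+a}\prod\langle\xi_j\rangle^{-s}\sim N_{1}^{a-2s}$ multiplied by $|\xi|/\langle\sigma_{\max}\rangle^{b_1}\sim N_1^{1-3b_1}=N_1^{-\frac12+9\epsilon}$ must be bounded. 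The high--high$\to$low case, where $N\ll N_{1}$, is easier because $\langle\xi\rangle^{s+a}$ is then small.

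\textbf{Main obstacle.} The main difficulty is the nearly resonant part of region (iii), where $\xi_{1}+\xi_{2}\approx0$ (or similar) so that the modulation gain degenerates. There we must instead use Lemma \ref{lem2.2} with the weight $|\xi_{1}^{2}-\xi_{2}^{2}|^{s'}$ applied to a suitably chosen pair, namely one whose frequencies are not opposite. We also need to track the $\epsilon$-losses coming from using $b_{1}<\frac12$ on all four functions. Keeping every modulation exponent at most $b_{1}$ while still summing the dyadic pieces, using the slack $N^{-c\epsilon}$, is the delicate bookkeeping that fixes the constants $64\epsilon$ and $\epsilon\le\frac1{2026}$.
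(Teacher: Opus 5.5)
\textbf{Region (ii).} This matches the paper's Case 1, which applies Corollary \ref{cor2.1} to $(u_1,u_2)$ and to $(\bar h,u_3)$. One correction: the requirement $\frac{4-2\delta}{4}b\le b_1$ forces $\delta\ge\frac{16\epsilon}{1+2\epsilon}$. With $\delta\approx 8\epsilon$ the modulation exponent still exceeds $b_1$. It is $4\delta\approx 64\epsilon$ that produces the constant in $a\le 1-64\epsilon$.

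\textbf{Region (iii): the tools you cite are unavailable.} Estimates \eqref{2.01} and \eqref{2.03} require $X_{0,\frac12+\epsilon,\varphi}$. In \eqref{4.01}, however, every factor, including the dual function $h$, carries only $\langle\sigma\rangle^{b_1}$ with $b_1<\frac12$. Once you spend $\langle\sigma_{\max}\rangle^{b_1}$ on one factor, the other three are merely in $X_{0,b_1,\varphi}$, and the cited estimates cannot put them in $L^6$ or $L^8$. The usable estimates are those with modulation exponent at most $b_1$: \eqref{2.02}, \eqref{2.04}, and Corollary \ref{cor2.1} with $\delta$ as above.

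\textbf{Region (iii): the resonant part has no bound.} This is the more serious problem, and it sits exactly in what you call the main obstacle. Take $(\xi_1,\xi_2,\xi_3)\approx(N,-N,N)$ with $\xi\approx N$. There the resonance function vanishes. All four frequencies ($\xi_1,\xi_2,\xi_3$, and $-\xi$ for $\bar h$) have the same modulus, so the weight $|\eta_1^2-\eta_2^2|^{s'}=|\eta_1-\eta_2|^{s'}|\eta_1+\eta_2|^{s'}$ of Lemma \ref{lem2.2} vanishes for every pair. Equal frequencies kill it just as opposite ones do, and it degenerates on a neighbourhood of this set. So no ``suitably chosen pair'' exists, and your argument gives no estimate there.

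\textbf{The missing idea.} In the fully comparable case $N_1\sim N_2\sim N_3\sim N$ no structure is needed. The paper's Case 4 applies \eqref{2.02} to all four factors; this is admissible since $\frac34 b<b_1$, and it gains $N^{-1/8}$ from each. The symbol $|\xi|\langle\xi\rangle^{s+a}\prod_j\langle\xi_j\rangle^{-s}\sim N^{1+a-2s}$ thus becomes $N^{\frac12+a-2s}$. This is bounded exactly when $a\le 2s-\frac12$, and this is the true origin of that constraint and of $s\ge\frac14$.

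\textbf{Your attribution of $a\le 2s-\frac12$ is off.} Your own count $N_1^{a-2s}\cdot N_1^{-\frac12+9\epsilon}$ in the non-resonant regime only requires $a\le 2s+\frac12-9\epsilon$.

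\textbf{The rest of (iii) needs no modulation gain.} For $N_1\sim N_2\gg N_3$, the paper uses Corollary \ref{cor2.1} on $(u_1,u_3)$ and \eqref{2.02} on $u_2$ and $h$. For $N_1\sim N_2\sim N_3\gg N$, it uses Corollary \ref{cor2.1} on $(u_1,\bar h)$ and \eqref{2.02} on $u_2,u_3$. These give the factors $N_1^{a-s-\frac14+O(\epsilon)}$ and $N_1^{a-2s-\frac14+O(\epsilon)}$. The resonance identity is therefore never needed.
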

\noindent{\bf Proof.}
According to the duality idea, to prove \eqref{4.01}, we only need to prove
\begin{eqnarray}
&&\left|\int_{\SR^{2}}\partial_{x}\left(\prod_{j=1}^{3}u_{j}(x,t)\right)\overline{h}(x,t)dxdt\right|\leq C\|h\|_{X_{-(s+a),b_{1},\varphi}}\prod_{j=1}^{3}\|u_{j}\|_{X_{s,b_{1}},\varphi}.\label{4.02}
\end{eqnarray}
We define
\begin{eqnarray*}
&&\sigma_{j}=\tau_{j}-\varphi(\xi_{j})=\tau_{j}-\xi_{j}^{3}(j=1,2,3),\,\sigma=\tau-\varphi(\xi)=\tau-\xi^{3},\\
&&F_{j}(\xi_{j},\tau_{j})=\langle\xi_{j}\rangle^{s}\langle\sigma_{j}\rangle^{b_{1}}\mathscr{F}_{xt}u_{j}(\xi_{j},\tau_{j})(j=1,2,3),\\
&&F(\xi,\tau)=\langle\xi\rangle^{-(s+a)}\langle\sigma\rangle^{b_{1}}\mathscr{F}_{xt}h(\xi,\tau),\\
&&\mathscr{F}_{xt}f_{j}=\frac{F_{j}}{\langle\sigma_{j}\rangle^{b_{1}}}(j=1,2,3),\,\mathscr{F}_{xt}f=\frac{F}{\langle\sigma\rangle^{b_{1}}},\\
&&K(\xi_{1},\xi_{2},\xi_{3},\xi,\tau_{1},\tau_{2},\tau_{3},\tau)=\frac{|\xi|\langle\xi\rangle^{s+a}}{\langle\sigma\rangle^{b_{1}}
\left(\prod\limits_{j=1}^{3}\langle\sigma_{j}
\rangle^{b_{1}}\right)\left(\prod\limits_{j=1}^{3}\langle\xi_{j}
\rangle^{s}\right)}.
\end{eqnarray*}
To prove \eqref{4.02}, we only need to prove
\begin{eqnarray}
&&\left|\int_{\Xi}K(\xi_{1},\xi_{2},\xi_{3},\xi,\tau_{1},\tau_{2},\tau_{3},\tau)F\prod_{j=1}^{3}F_{j}d\delta\right|\leq C\|F\|_{L_{\xi\tau}^{2}}\prod_{j=1}^{3}\|F_{j}\|_{L_{\xi\tau}^{2}},\label{4.03}
\end{eqnarray}
where
\begin{eqnarray*}
&&\int_{\Xi}d\delta=\int_{\xi=\sum\limits_{j=1}^{3}\xi_{j},\, \tau=\sum\limits_{j=1}^{3}\tau_{j}}d\xi_{1}d\tau_{1}d\xi_{2}d\tau_{2}d\xi d\tau.
\end{eqnarray*}
We denote
\begin{eqnarray*}
&&I=\int_{\Xi}K_{1}(\xi_{1},\xi_{2},\xi_{3},\xi,\tau_{1},\tau_{2},\tau_{3},\tau)F\prod_{j=1}^{3}F_{j}d\delta.
\end{eqnarray*}
Let $P_{N}F$ and $P_{N_{j}}F_{j}(1\leq j\leq3)$ be the dyadic decompositions of $F$ and $F_{j}$, respectively.
Then, we have that $\supp \mathscr{F}_{x}P_{N}F\subset\{\xi\in\R:|\xi|\sim N\}$,
and $\supp \mathscr{F}_{x}P_{N_{j}}F_{j}\subset \{\xi\in\R:|\xi|\sim N_{j}\}$, where $N$ and $N_{j}$ are dyadic numbers. $\supp \mathscr{F}_{x}P_{0}F\subset\{\xi\in\R:|\xi|\leq 2\}$,
and $\supp \mathscr{F}_{x}P_{0}F_{j}\subset \{\xi\in\R:|\xi|\leq 2\}$.
We define $\sum=\sum\limits_{N_{1}, N_{2}, N_{3}, N}$.
Without loss of generality, we assume $N_{1}\geq N_{2}\geq N_{3}\geq1$, and $N\geq 1$.

We consider the following cases: {\bf Case 1}: $N_{1}\gg N_{2}\geq N_{3}$; {\bf Case
2}: $N_{1}\sim N_{2}\gg N_{3}$ and {\bf Case 3}: $N_{1}\sim N_{2}\sim N_{3}\gg N$; {\bf Case 4}: $N_{1}\sim N_{2}\sim N_{3}\sim N$.

\noindent {\bf Case 1}: When $N_{1}\gg N_{2}\geq N_{3}$, we have $N_{1}\sim N$.
By using Corollary 2.1, Plancherel identity,  H\"{o}lder inequality, $\delta<\frac{1}{2}$
and $0\leq a\leq 1-64\epsilon$ and $s\geq0$, we have
\begin{eqnarray}
&&|I|\leq C\sum N^{1+a}N_{2}^{-s}N_{3}^{-s}\left\|P_{N_{1}}f_{1}P_{N_{2}}f_{2}\right\|_{L_{xt}^{2}}
\left\|P_{N}fP_{N_{3}}f_{3}\right\|_{L_{xt}^{2}}\nonumber\\
&&\leq C\sum N^{1+a} N_{1}^{-2(\frac{1}{2}-\delta)}N^{-2(\frac{1}{2}-\delta)}
\|P_{N}f\|_{X_{0,\frac{4-2\delta}{4}b,\varphi}}
\|P_{N_{3}}f_{3}\|_{X_{0,\frac{4-2\delta}{4}b,\varphi}}\nonumber\\
&&\quad\times \prod_{j=1}^{2}\|P_{N_{j}}f_{j}\|_{X_{0,\frac{4-2\delta}{4}b,\varphi}}
\nonumber\\
&&\leq C\sum N_{1}^{a-1+4\delta}\|P_{N}F\|_{L_{\xi\tau}^{2}}\prod_{j=1}^{3}\|P_{N_{j}}F_{j}\|_{L_{\xi\tau}^{2}}\leq C\|F\|_{L_{\xi\tau}^{2}}\prod_{j=1}^{3}\|F_{j}\|_{L_{\xi\tau}^{2}},\label{4.04}
\end{eqnarray}
where in the last inequality, we used the following condition
\begin{eqnarray}
&&\frac{4-2\delta}{4}b=\frac{4-2\delta}{4}\times\left(\frac{1}{2}+\epsilon\right)
\leq \frac{1}{2}-3\epsilon\Longleftrightarrow \delta\geq \frac{16\epsilon}{1+2\epsilon}.\label{4.05}
\end{eqnarray}
{\bf Case 2}: When $N_{1}\sim N_{2}\gg N_{3}$. By using \eqref{4.05}, \eqref{2.02}, Corollary 2.1 and Plancherel identity,
 H\"{o}lder inequality as well as $0\leq a\leq s+\frac{1}{4}-64\epsilon$, we have
\begin{eqnarray}
&&|I|\leq C\sum N^{1+a}N_{2}^{-s}N_{3}^{-s}\left\|P_{N_{1}}f_{1}P_{N_{3}}f_{3}\right\|_{L_{xt}^{2}}\left\|P_{N_{2}}f_{2}\right\|_{L_{xt}^{4}}
\left\|P_{N}f\right\|_{L_{xt}^{4}}\nonumber\\
&&\leq C\sum N^{-\frac{\epsilon}{8}}N^{1+a+\frac{\epsilon}{8}}N_{1}^{-s}N_{1}^{-2(\frac{1}{2}-\delta)}N_{2}^{-\frac{1}{8}}N^{-\frac{1}{8}}
\|P_{N_{3}}f_{3}\|_{X_{0,\frac{4-2\delta}{4}b,\varphi}}\nonumber\\
&&\quad\times\|P_{N_{1}}f_{1}\|_{X_{0,\frac{4-2\delta}{4}b,\varphi}}\|P_{N_{2}}f_{2}\|_{X_{0,\frac{3}{4}b,\varphi}}
\|P_{N}f\|_{X_{0,\frac{3}{4}b,\varphi}}\nonumber\\
&&\leq C\sum N^{-\frac{\epsilon}{8}}N_{1}^{a-s-\frac{1}{4}+\frac{\epsilon}{8}+2\delta}\|P_{N}F\|_{L_{\xi\tau}^{2}}
\prod_{j=1}^{3}\|P_{N_{j}}F_{j}\|_{L_{\xi\tau}^{2}}\nonumber\\
&&\leq C\|F\|_{L_{\xi\tau}^{2}}\prod_{j=1}^{3}\|F_{j}\|_{L_{\xi\tau}^{2}},\label{4.06}
\end{eqnarray}
where in the last inequality, we used the following condition
\begin{eqnarray}
&&\frac{3}{4}b=\frac{3}{4}\times\left(\frac{1}{2}+\epsilon\right)=\frac{3}{8}+\frac{3\epsilon}{4}<\frac{1}{2}-3\epsilon=b_{1}.\label{4.07}
\end{eqnarray}
{\bf Case 3}: When $N_{1}\sim N_{2}\sim N_{3}\gg N$.
By using \eqref{4.05}, \eqref{4.07}, \eqref{2.02}, Corollary 2.1 and Plancherel identity,  H\"{o}lder inequality
as well as  $0\leq a\leq 2s+\frac{1}{4}-64\epsilon$, we have
\begin{eqnarray}
&&|I|\leq C\sum N^{1+a}N_{2}^{-s}N_{3}^{-s}\left\|P_{N_{1}}f_{1}P_{N}f\right\|_{L_{xt}^{2}}\left\|P_{N_{2}}f_{2}\right\|_{L_{xt}^{4}}
\left\|P_{N_{3}}f_{3}\right\|_{L_{xt}^{4}}\nonumber\\
&&\leq C\sum N^{1+a}N_{1}^{-2(\frac{1}{2}-\delta)}N_{1}^{-2s}N_{2}^{-\frac{1}{8}}N_{3}^{-\frac{1}{8}}
\|P_{N}f\|_{X_{0,\frac{4-2\delta}{4}b,\varphi}}\nonumber\\
&&\quad\times\|P_{N_{1}}f_{1}\|_{X_{0,\frac{4-2\delta}{4}b,\varphi}}\prod_{j=2}^{3}\|P_{N_{j}}f_{j}\|_{X_{0,\frac{3}{4}b,\varphi}}
\nonumber\\
&&\leq C\sum N_{1}^{a-2s-\frac{1}{4}+2\delta}\|P_{N}F\|_{L_{\xi\tau}^{2}}\prod_{j=1}^{3}\|P_{N_{j}}F_{j}\|_{L_{\xi\tau}^{2}}
\leq C\|F\|_{L_{\xi\tau}^{2}}\prod_{j=1}^{3}\|F_{j}\|_{L_{\xi\tau}^{2}}.\label{4.08}
\end{eqnarray}
{\bf Case 4}: When $N_{1}\sim N_{2}\sim N_{3}\sim N$.
By using \eqref{4.07}, \eqref{2.02} and Plancherel identity,  H\"{o}lder inequality, $0\leq a\leq 2s-\frac{1}{2}$, we have
\begin{eqnarray}
&&|I|\leq C\sum N^{s+a}\left(\prod_{j=1}^{3}N_{j}^{-s}\right)
\left(\prod\limits_{j=1}^{3}\left\|P_{N_{j}}f_{j}\right\|_{L_{xt}^{4}}\right)\left\|P_{N}f\right\|_{L_{xt}^{4}}\nonumber\\
&&\leq C\sum N_{1}^{s+a-3s}N_{1}^{-\frac{1}{2}}\left(\prod\limits_{j=1}^{3}
\left\|P_{N_{j}}f_{j}\right\|_{X_{0,\frac{3}{4}b,\varphi}}\right)\|P_{N}f\|_{X_{0,\frac{3}{4}b,\varphi}}\nonumber\\
&&\leq C\sum N_{1}^{\frac{1}{2}+a-2s}\|P_{N}F\|_{L_{\xi\tau}^{2}}\prod_{j=1}^{3}\|P_{N_{j}}F_{j}\|_{L_{\xi\tau}^{2}}
\leq C\|F\|_{L_{\xi\tau}^{2}}\prod_{j=1}^{3}\|F_{j}\|_{L_{\xi\tau}^{2}}.\label{4.09}
\end{eqnarray}

The proof of Lemma 4.1 is completed.

\begin{Lemma}\label{lem4.2}
Let $s\geq\frac{1}{4}$,\, $0\leq a\leq
\min\left\{2s-\frac{1}{2},1-48\epsilon\right\}$,\, $b_{1}=\frac{1}{2}-3\epsilon$,\,$0<\epsilon\leq\frac{1}{2026}$ and $\phi(\xi)=-\nu\xi|\xi|+\xi^{3}$.
Then,  we have
\begin{eqnarray}
&&\|\partial_{x}(v^{3})\|_{X_{s+a, -b_{1},\phi}}\leq C\|v\|_{X_{s,b_{1},\phi}}^{3}.\label{4.010}
\end{eqnarray}

\end{Lemma}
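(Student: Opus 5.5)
The estimate \eqref{4.010} will be proved by following the proof of Lemma 4.1 almost line by line, with the dispersion $\varphi$ replaced by $\phi(\xi)=-\nu\xi|\xi|+\xi^{3}$. The mKdV tools are swapped for their KdV-BO analogues: Lemma 2.3 replaces Lemma 2.1, and Corollary 2.2 replaces Corollary 2.1.

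By duality it suffices to bound
\[
\int_{\Xi}K\,F\prod_{j=1}^{3}F_{j}\,d\delta,
\]
where $\sigma_j=\tau_j-\phi(\xi_j)$ and $\sigma=\tau-\phi(\xi)$, and $K$ is defined exactly as in Lemma 4.1. We decompose dyadically, assume $N_1\geq N_2\geq N_3$, and use the same four cases:
\begin{itemize}
\item[] Case 1: $N_1\gg N_2$;
\item[] Case 2: $N_1\sim N_2\gg N_3$;
\item[] Case 3: $N_1\sim N_2\sim N_3\gg N$;
\item[] Case 4: all four frequencies comparable.
\end{itemize}

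\textbf{Low frequencies.} First I would dispose of the configurations where the largest frequency satisfies $N_1\lesssim 2A$. There the weights $|\xi|\langle\xi\rangle^{s+a}$ and $\langle\xi_j\rangle^{-s}$ are bounded. H\"older with four $L^4_{xt}$ factors, together with \eqref{2.08} and $\frac{2}{3}b<b_1$, then closes the estimate immediately.

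\textbf{High frequencies.} From now on $N_1\geq 2A$, so Corollary 2.2 applies whenever two frequencies are separated. It is applied to the pairs
\begin{itemize}
\item[] $(f_1,f_2)$ and $(f,f_3)$ in Case 1;
\item[] $(f_1,f_3)$ in Case 2;
\item[] $(f_1,f)$ in Case 3.
\end{itemize}
Take the exponent $s=\frac12-\delta$ in Corollary 2.2. This gives a gain $N_1^{-2(\frac12-\delta)}$ at the cost of the regularity $\frac{2(1+s)}{3}b=\frac{3-2\delta}{3}b$. The remaining factors are put in $L^4_{xt}$ via \eqref{2.010}, which gains $N^{-1/8}$ per factor with regularity $\frac34 b<b_1$.

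The exponent counting then mirrors \eqref{4.04}, \eqref{4.06}, \eqref{4.08} and \eqref{4.09}:
\begin{itemize}
\item[] Case 1 requires $a-1+4\delta<0$;
\item[] Case 2 requires $a-s-\frac14+2\delta+\text{small}<0$;
\item[] Case 3 requires $a-2s-\frac14+2\delta<0$;
\item[] Case 4, using only \eqref{2.010}, requires $\frac12+a-2s\leq 0$.
\end{itemize}
Case 4 is exactly the hypothesis $a\leq 2s-\frac12$; it is borderline, so the dyadic sum is closed by Cauchy--Schwarz using $N_1\sim N_2\sim N_3\sim N$. Cases 2 and 3 are fine since $s\geq\frac14$ and $a\leq 2s-\frac12$.

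\textbf{Main obstacle.} The key constraint is the compatibility condition replacing \eqref{4.05}, namely
\[
\frac{3-2\delta}{3}\Bigl(\frac12+\epsilon\Bigr)\leq\frac12-3\epsilon
\quad\Longleftrightarrow\quad
\delta\geq\frac{12\epsilon}{1+2\epsilon}.
\]
Choosing $\delta=12\epsilon$, the Case 1 requirement $a<1-4\delta$ becomes $a<1-48\epsilon$. This explains the threshold $1-48\epsilon$, compared with $1-64\epsilon$ in Lemma 4.1, where the weaker interpolation $\frac{4-2\delta}{4}$ forced $\delta=16\epsilon$. The endpoint $a=1-48\epsilon$ is borderline; I would handle it either by taking $\delta$ marginally larger, which is allowed because $\frac{12\epsilon}{1+2\epsilon}<12\epsilon$, or by keeping the extra $N^{-\epsilon/8}$-type slack as in \eqref{4.06}.

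A secondary point needs checking. Corollary 2.2 requires the high-frequency factor to lie above $2A$, because $|\phi'(\xi_1)-\phi'(\xi_2)|\sim N_1^2$ only holds when $|\xi_1|\gg|\xi_2|$ and $|\xi_1|\geq 2A$, the $\nu$-term being lower order there. This is where the preliminary low-frequency reduction is used. In Case 1 the pair $(f,f_3)$ has $N\sim N_1\geq 2A$, so the hypothesis holds there as well.
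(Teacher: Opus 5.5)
Your argument is the paper's own proof of Lemma 4.2: the same duality reduction, the same four cases, the same pairings in Corollary 2.2 with exponent $\frac12-\delta$, the same use of \eqref{2.010}, and the same condition \eqref{4.014}. Your explicit low-frequency reduction improves on the paper, which simply assumes $N_1\ge N_2\ge N_3\ge 2A$, $N\ge 2A$. Two pieces of your exponent bookkeeping are wrong as written, though.

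\textbf{Case 1 endpoint.} At $a=1-48\epsilon$, Case 1 needs $4\delta<48\epsilon$. So $\delta$ must be marginally \emph{smaller} than $12\epsilon$, which is exactly what $\frac{12\epsilon}{1+2\epsilon}<12\epsilon$ allows; enlarging $\delta$ makes $a-1+4\delta$ positive. Nor is the $N^{-\epsilon/8}$ device of \eqref{4.015} a source of slack: it buys decay in $N$ at the price of an extra factor $N_1^{\epsilon/8}$. There are two correct fixes:
\begin{itemize}
\item take $\delta=\frac{12\epsilon}{1+2\epsilon}$, which gives $a-1+4\delta\le-\frac{96\epsilon^2}{1+2\epsilon}$;
\item or accept exponent $0$, which is harmless in Case 1 if you keep $N_2^{-s}N_3^{-s}$ for the $N_2,N_3$ sums and use Cauchy--Schwarz on the diagonal $N\sim N_1$.
\end{itemize}

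\textbf{Case 2 is not settled by $s\ge\frac14$ and $a\le 2s-\frac12$ alone.} The requirement $a<s+\frac14-2\delta$ follows from $a\le 2s-\frac12$ only when $s<\frac34-2\delta$. Beyond that you need $a\le 1-48\epsilon$. The two regimes meet at $s=\frac34-24\epsilon$, $a=1-48\epsilon$, where the margin is $24\epsilon-2\delta$, i.e.\ only $\frac{48\epsilon^2}{1+2\epsilon}$ even for the optimal $\delta$.

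So your ``small'' loss cannot be the $\frac{\epsilon}{8}$ of \eqref{4.015}. Since $\frac{\epsilon}{8}>\frac{48\epsilon^2}{1+2\epsilon}$ for $\epsilon\le\frac{1}{2026}$, that bookkeeping fails for $a$ at the top of its range when $s$ lies in a window of width about $\frac{\epsilon}{4}$ around $\frac34-24\epsilon$. The paper's own justification has the same defect: the condition $a\le s+\frac14-48\epsilon$ it invokes is not implied by the hypotheses for $s\in(\frac34-48\epsilon,\frac34)$.

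The remedy is to incur no loss at all:
\begin{itemize}
\item In Case 2 the $N$-dependence is $N^{1+s+a-\frac18}$ with a positive exponent (use \eqref{2.08} for the block $|\xi|\le 2$). Hence $\sum_{N\lesssim N_1}$ is geometric and bounded by its $N\sim N_1$ term.
\item Keep $N_3^{-s}$ to sum over $N_3$.
\item With $\delta=\frac{12\epsilon}{1+2\epsilon}$ the $N_1$-exponent is then at most $-\frac{48\epsilon^2}{1+2\epsilon}$.
\end{itemize}
With these corrections the estimate closes on the full stated range of $a$.
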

\noindent{\bf Proof.}
According to the duality idea, to prove \eqref{4.010}, we only need to prove
\begin{eqnarray}
&&\left|\int_{\SR^{2}}\partial_{x}\left(\prod_{j=1}^{3}v_{j}(x,t)\right)\overline{w}(x,t)dxdt\right|\leq C\|w\|_{X_{-(s+a),b_{1},\phi}}\prod_{j=1}^{3}\|v_{j}\|_{X_{s,b_{1},\phi}}.\label{4.011}
\end{eqnarray}
We define
\begin{eqnarray*}
&&\sigma_{j}=\tau_{j}-\phi(\xi_{j})(j=1,2,3),\,\sigma=\tau-\phi(\xi),\\
&&G_{j}(\xi_{j},\tau_{j})=\langle\xi_{j}\rangle^{s}\langle\sigma_{j}\rangle^{b_{1}}\mathscr{F}_{xt}v_{j}(\xi_{j},\tau_{j})(j=1,2,3),\\
&&G(\xi,\tau)=\langle\xi\rangle^{-(s+a)}\langle\sigma\rangle^{b_{1}}\mathscr{F}_{xt}w(\xi,\tau),\\
&&\mathscr{F}_{xt}g_{j}=\frac{G_{j}}{\langle\sigma_{j}\rangle^{b_{1}}}(j=1,2,3),\,\mathscr{F}_{xt}g=\frac{G}{\langle\sigma\rangle^{b_{1}}},\\
&&K_{1}(\xi_{1},\xi_{2},\xi_{3},\xi,\tau_{1},\tau_{2},\tau_{3},\tau)=\frac{|\xi|\langle\xi\rangle^{s+a}}{\langle\sigma\rangle^{b_{1}}
\left(\prod\limits_{j=1}^{3}\langle\sigma_{j}
\rangle^{b_{1}}\right)\left(\prod\limits_{j=1}^{3}\langle\xi_{j}
\rangle^{s}\right)}.
\end{eqnarray*}
To prove \eqref{4.011}, we only need to prove
\begin{eqnarray}
&&\left|\int_{\Xi}K_{1}(\xi_{1},\xi_{2},\xi_{3},\xi,\tau_{1},\tau_{2},\tau_{3},\tau)G\prod_{j=1}^{3}G_{j}d\delta\right|\leq C\|G\|_{L_{\xi\tau}^{2}}\prod_{j=1}^{3}\|G_{j}\|_{L_{\xi\tau}^{2}},\label{4.012}
\end{eqnarray}
where
\begin{eqnarray*}
&&\int_{\Xi}d\delta=\int_{\xi=\sum\limits_{j=1}^{3}\xi_{j},\, \tau=\sum\limits_{j=1}^{3}\tau_{j}}d\xi_{1}d\tau_{1}d\xi_{2}d\tau_{2}d\xi d\tau.
\end{eqnarray*}
We denote
\begin{eqnarray*}
&&I_{1}=\int_{\Xi}K_{3}(\xi_{1},\xi_{2},\xi_{3},\xi,\tau_{1},\tau_{2},\tau_{3},\tau)G\prod_{j=1}^{3}G_{j}d\delta.
\end{eqnarray*}
Let $P_{N}G$ and $P_{N_{j}}G_{j}(1\leq j\leq3)$ be the dyadic decompositions of $G$ and $G_{j}$, respectively.
Then, we have that $\supp \mathscr{F}_{x}P_{N}G\subset\{\xi\in\R:|\xi|\sim N\}$,
and $\supp \mathscr{F}_{x}P_{N_{j}}G_{j}\subset \{\xi\in\R:|\xi|\sim N_{j}\}$, where $N$ and $N_{j}$ are dyadic numbers. $\supp \mathscr{F}_{x}P_{0}G\subset\{\xi\in\R:|\xi|\leq 2\}$,
and $\supp \mathscr{F}_{x}P_{0}G_{j}\subset \{\xi\in\R:|\xi|\leq 2\}$.
We define $\sum=\sum\limits_{N_{1}, N_{2}, N_{3}, N}$.
Without loss of generality, we assume $N_{1}\geq N_{2}\geq N_{3}\geq 2A$, and $N\geq 2A$.

We consider the following cases: {\bf Case 1}: $N_{1}\gg N_{2}\geq N_{3}$; {\bf Case
2}: $N_{1}\sim N_{2}\gg N_{3}$ and {\bf Case 3}: $N_{1}\sim N_{2}\sim N_{3}\gg N$; {\bf Case 4}: $N_{1}\sim N_{2}\sim N_{3}\sim N$.

\noindent {\bf Case 1}: When $N_{1}\gg N_{2}\geq N_{3}$, we have $N_{1}\sim N$.
By using  Corollary 2.2, Plancherel identity,  H\"{o}lder inequality, $\delta<\frac{1}{2}$
and $0\leq a\leq 1-48\epsilon$ and $s\geq0$, we have
\begin{eqnarray}
&&|I_{1}|\leq C\sum N^{1+a}N_{2}^{-s}N_{3}^{-s}\left\|P_{N_{1}}g_{1}P_{N_{2}}g_{2}\right\|_{L_{xt}^{2}}\left\|P_{N}gP_{N_{3}}g_{3}\right\|_{L_{xt}^{2}}\nonumber\\
&&\leq C\sum N^{1+a} N_{1}^{-2(\frac{1}{2}-\delta)}N^{-2(\frac{1}{2}-\delta)}\|P_{N}g\|_{X_{0,\frac{3-2\delta}{3}b,\phi}}
\|P_{N_{3}}g_{3}\|_{X_{0,\frac{3-2\delta}{3}b,\phi}}\nonumber\\
&&\quad\times\prod_{j=1}^{2}\|g_{j}\|_{X_{0,\frac{3-2\delta}{3}b,\phi}}\nonumber\\
&&\leq C\sum N_{1}^{a-1+4\delta}\|P_{N}G\|_{L_{\xi\tau}^{2}}\prod_{j=1}^{3}\|P_{N_{j}}G_{j}\|_{L_{\xi\tau}^{2}}
\leq C\|G\|_{L_{\xi\tau}^{2}}\prod_{j=1}^{3}\|G_{j}\|_{L_{\xi\tau}^{2}},\label{4.013}
\end{eqnarray}
where in the last inequality, we used the following condition
\begin{eqnarray}
&&\frac{3-2\delta}{3}b=\frac{3-2\delta}{3}\times\left(\frac{1}{2}+\epsilon\right)
\leq \frac{1}{2}-3\epsilon\Longleftrightarrow \delta\geq \frac{12\epsilon}{1+2\epsilon}.\label{4.014}
\end{eqnarray}
{\bf Case 2}: When $N_{1}\sim N_{2}\gg N_{3}$.
By using \eqref{4.014}, \eqref{2.010}, Corollary 2.2 and Plancherel identity,  H\"{o}lder inequality
as well as $0\leq a\leq s+\frac{1}{4}-48\epsilon$, we have
\begin{eqnarray}
&&|I_{1}|\leq C\sum N^{1+a}N_{2}^{-s}N_{3}^{-s}\left\|P_{N_{1}}g_{1}P_{N_{3}}g_{3}\right\|_{L_{xt}^{2}}\left\|P_{N_{2}}g_{2}\right\|_{L_{xt}^{4}}
\left\|P_{N}g\right\|_{L_{xt}^{4}}\nonumber\\
&&\leq C\sum N^{-\frac{\epsilon}{8}}N^{1+a+\frac{\epsilon}{8}}N_{1}^{-s}N_{1}^{-2(\frac{1}{2}-\delta)}N_{2}^{-\frac{1}{8}}N^{-\frac{1}{8}}
\|P_{N_{3}}g_{3}\|_{X_{0,\frac{3-2\delta}{3}b,\phi}}\nonumber\\
&&\quad\times\|P_{N_{1}}g_{1}\|_{X_{0,\frac{3-2\delta}{3}b,\phi}}\|P_{N_{2}}g_{2}\|_{X_{0,\frac{3}{4}b,\phi}}
\|P_{N}g\|_{X_{0,\frac{3}{4}b,\phi}}\nonumber\\
&&\leq C\sum N^{-\frac{\epsilon}{8}}N_{1}^{a-s-\frac{1}{4}+\frac{\epsilon}{8}+2\delta}\|P_{N}G\|_{L_{\xi\tau}^{2}}
\prod_{j=1}^{3}\|P_{N_{j}}G_{j}\|_{L_{\xi\tau}^{2}}\nonumber\\
&&\leq C\|G\|_{L_{\xi\tau}^{2}}\prod_{j=1}^{3}\|G_{j}\|_{L_{\xi\tau}^{2}},\label{4.015}
\end{eqnarray}
where in the last inequality, we used the following condition
\begin{eqnarray}
&&\frac{3}{4}b=\frac{3}{4}\times\left(\frac{1}{2}+\epsilon\right)=\frac{3}{8}+\frac{3\epsilon}{4}<\frac{1}{2}-3\epsilon=b_{1},\label{4.016}
\end{eqnarray}
{\bf Case 3}: When $N_{1}\sim N_{2}\sim N_{3}\gg N$.
By using \eqref{4.014}, \eqref{4.016}, \eqref{2.010}, Corollary 2.2 and Plancherel identity,  H\"{o}lder inequality as well as $0\leq a\leq 2s+\frac{1}{4}-48\epsilon$,  we have
\begin{eqnarray}
&&|I_{1}|\leq C\sum N^{1+a}N_{2}^{-s}N_{3}^{-s}\left\|P_{N_{1}}g_{1}P_{N}g\right\|_{L_{xt}^{2}}\left\|P_{N_{2}}f_{2}\right\|_{L_{xt}^{4}}
\left\|P_{N_{3}}f_{3}\right\|_{L_{xt}^{4}}\nonumber\\
&&\leq C\sum N^{1+a}N_{1}^{-2s}N_{1}^{-2(\frac{1}{2}-\delta)}N_{2}^{-\frac{1}{8}}N_{3}^{-\frac{1}{8}}
\|P_{N}g\|_{X_{0,\frac{3-2\delta}{3}b,\phi}}\nonumber\\
&&\quad\times\|P_{N_{1}}g_{1}\|_{X_{0,\frac{3-2\delta}{3}b,\phi}}\prod_{j=2}^{3}\|P_{N_{j}}g_{j}\|_{X_{0,\frac{3}{4}b,\phi}}\nonumber\\
&&\leq C\sum N_{1}^{a-2s-\frac{1}{4}+2\delta}\|P_{N}G\|_{L_{\xi\tau}^{2}}\prod_{j=1}^{3}\|P_{N_{j}}G_{j}\|_{L_{\xi\tau}^{2}}
\leq C\|G\|_{L_{\xi\tau}^{2}}\prod_{j=1}^{3}\|G_{j}\|_{L_{\xi\tau}^{2}}.\label{4.017}
\end{eqnarray}
{\bf Case 4}: When $N_{1}\sim N_{2}\sim N_{3}\sim N$.
By using \eqref{4.016}, \eqref{2.010} and Plancherel identity,  H\"{o}lder inequality, $0\leq a\leq 2s-\frac{1}{2}$, we have
\begin{eqnarray}
&&|I_{1}|\leq C\sum N^{s+a}\left(\prod_{j=1}^{3}N_{j}^{-s}\right)
\left(\prod\limits_{j=1}^{3}\left\|P_{N_{j}}g_{j}\right\|_{L_{xt}^{4}}\right)\left\|P_{N}g\right\|_{L_{xt}^{4}}\nonumber\\
&&\leq C\sum N_{1}^{s+a-3s}N_{1}^{-\frac{1}{2}}\left(\prod\limits_{j=1}^{3}
\left\|P_{N_{j}}g_{j}\right\|_{X_{0,\frac{3}{4}b,\phi}}\right)\|P_{N}g\|_{X_{0,\frac{3}{4}b,\phi}}\nonumber\\
&&\leq C\sum N_{1}^{\frac{1}{2}+a-2s}\|P_{N}G\|_{L_{\xi\tau}^{2}}\prod_{j=1}^{3}\|P_{N_{j}}G_{j}\|_{L_{\xi\tau}^{2}}
\leq C\|G\|_{L_{\xi\tau}^{2}}\prod_{j=1}^{3}\|G_{j}\|_{L_{\xi\tau}^{2}}.\label{4.018}
\end{eqnarray}

The proof of Lemma 4.2 is completed.

\begin{Lemma}\label{lem4.3}
Let $s\geq\frac{1}{4}$,\, $0\leq a\leq
\min\left\{2s-\frac{1}{2},1-64\epsilon\right\}$,\,$b_{1}=\frac{1}{2}-3\epsilon$,\,$0<\epsilon\leq\frac{1}{2026}$.
Then,  we have
\begin{eqnarray}
&&\|\partial_{x}(u^{3})\|_{X_{s+a, -b_{1},\varphi}^{T}}\leq C\|u\|_{X_{s,b_{1},\varphi}^{T}}^{3}.\label{4.019}
\end{eqnarray}

\end{Lemma}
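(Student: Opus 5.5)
Lemma 4.3 is the time-localized version of Lemma 4.1, and I would derive it from Lemma 4.1 by the standard extension argument for restriction norms. Fix $u$ with $\|u\|_{X^{T}_{s,b_{1},\varphi}}<\infty$, and fix any extension $\bar u\in X_{s,b_{1},\varphi}$ with $\bar u=u$ on $[0,T]$ and
\[
\|\bar u\|_{X_{s,b_{1},\varphi}}\leq 2\|u\|_{X^{T}_{s,b_{1},\varphi}} .
\]
Since the nonlinearity is pointwise in time, $\partial_{x}(\bar u^{3})=\partial_{x}(u^{3})$ on $\mathbb{R}\times[0,T]$. So $\partial_{x}(\bar u^{3})$ is an admissible extension of $\partial_{x}(u^{3})$ in the definition of the $X^{T}_{s+a,-b_{1},\varphi}$ norm, and
\[
\|\partial_{x}(u^{3})\|_{X^{T}_{s+a,-b_{1},\varphi}}\leq\|\partial_{x}(\bar u^{3})\|_{X_{s+a,-b_{1},\varphi}}.
\]

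Next I apply Lemma 4.1 to the global function $\bar u$. Its hypotheses on $s,a,\epsilon,b_{1}$ are exactly those of Lemma 4.3, so
\[
\|\partial_{x}(\bar u^{3})\|_{X_{s+a,-b_{1},\varphi}}\leq C\|\bar u\|^{3}_{X_{s,b_{1},\varphi}}\leq 8C\|u\|^{3}_{X^{T}_{s,b_{1},\varphi}} .
\]
Equivalently, one can take the infimum over all extensions $\bar u$ directly instead of using the near-minimizer. Either way this gives \eqref{4.19} with a constant independent of $T$.

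The only point needing care is that the restriction norm is defined by an infimum over extensions that need not vanish outside $[0,T]$. The argument above uses only that any extension of $u$ cubed is an extension of $u^{3}$, so there is no issue. If one instead wanted the version with $\chi_{[0,T]}u$, Lemma 2.6 applies because $0\leq b_{1}<\frac{1}{2}$, giving $\|\chi_{[0,T]}u\|_{X_{s,b_{1},\varphi}}\sim\|u\|_{X^{T}_{s,b_{1},\varphi}}$ uniformly in $T$. I expect no real obstacle: all the analytic difficulty lives in Lemma 4.1, whose case analysis already covers all frequency configurations, including the low-frequency pieces $P_{0}$, which are handled trivially since the weights there are bounded.
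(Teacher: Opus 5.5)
Your proposal is correct and follows the paper's argument: the paper also takes extensions $f_n$ of $u$ with $\|f_n\|_{X_{s,b_1,\varphi}}\le \|u\|_{X^{T}_{s,b_1,\varphi}}+\frac{1}{n}$, notes that $\partial_x(f_n^3)$ is an extension of $\partial_x(u^3)$, applies Lemma 4.1, and lets $n\to\infty$. Using $\frac{1}{n}$ in place of your factor $2$ gives the constant $C$ instead of $8C$; your variant of taking the infimum over extensions directly is the same thing.
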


\noindent{\bf Proof.}
According to the definition of the $X_{s,b_1,\varphi}^{T}$ norm, for any $n\in\mathbf{N}$, there exists a sequence $\{f_{n}\}$ defined on the whole $\R$ such that:
\begin{eqnarray}
&&f_{n}=u,\,t\in[0,T],\label{4.020}\\
&&\left\|f_{n}\right\|_{X_{s,b_{1},\varphi}}\leq \|u\|_{X_{s,b_{1},\varphi}^{T}}+\frac{1}{n}.\label{4.021}
\end{eqnarray}
By using Lemma 4.1, we have
\begin{eqnarray}
&&\|\partial_{x}[(f_{n})^{3}]\|_{X_{s+a, -b_{1},\varphi}}\leq C\|f_{n}\|_{X_{s,b_{1},\varphi}}^{3}.\label{4.022}
\end{eqnarray}
From \eqref{4.020}, we have
\begin{eqnarray}
&&\partial_{x}[(f_{n})^{3}]=\partial_{x}(u^{3}),\,t\in[0,T],\label{4.023}
\end{eqnarray}
From \eqref{4.023}, we know that $\partial_{x}[(f_{n})^{3}]$ is an extension of $\partial_{x}(u^{3})$ to the whole space $\R$.
According to the definition of the $X_{s,b_1,\varphi}^{T}$ norm, \eqref{4.021} and \eqref{4.022}, we have
\begin{eqnarray}
&&\|\partial_{x}(u^{3})\|_{X_{s+a, -b_{1},\varphi}^{T}}\leq \|\partial_{x}[(f_{n})^{3}]\|_{X_{s+a,-b_{1},\varphi}}
\leq C\|f_{n}\|_{X_{s,b_{1},\varphi}}^{3}\nonumber\\
&&\leq C\left(\|u\|_{X_{s,b_{1},\varphi}^{T}}+\frac{1}{n}\right)^{3}.\label{4.024}
\end{eqnarray}
By using \eqref{4.024}, we have
\begin{eqnarray}
&&\|\partial_{x}(u^{3})\|_{X_{s+a, -b_{1},\varphi}^{T}}\leq C\left(\|u\|_{X_{s,b_{1},\varphi}^{T}}+\frac{1}{n}\right)^{3}.\label{4.025}
\end{eqnarray}
Taking the $\lim\limits_{n\rightarrow\infty}$ on both sides of \eqref{4.025}, we obtain
\begin{eqnarray}
&&\|\partial_{x}(u^{3})\|_{X_{s+a, -b_{1},\varphi}^{T}}\leq C\|u\|_{X_{s,b_{1},\varphi}^{T}}^{3}.\label{4.026}
\end{eqnarray}

The proof of Lemma 4.3 is completed.

\begin{Lemma}\label{lem4.4}
Let $s\geq\frac{1}{4}$,\, $0\leq a\leq
\min\left\{2s-\frac{1}{2},1-48\epsilon\right\}$,\,
$b_{1}=\frac{1}{2}-3\epsilon$,\,$0<\epsilon\leq\frac{1}{2026}$ and $\phi(\xi)=-\nu\xi|\xi|+\xi^{3}$.
Then,  we have
\begin{eqnarray}
&&\|\partial_{x}(v^{3})\|_{X_{s+a, -b_{1},\phi}^{T}}\leq C\|v\|_{X_{s,b_{1},\phi}^{T}}^{3}.\label{4.027}
\end{eqnarray}

\end{Lemma}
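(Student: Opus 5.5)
The argument repeats the proof of Lemma 4.3 word for word, with Lemma 4.1 replaced by Lemma 4.2 and the phase $\varphi$ replaced by $\phi$. The idea is to pass from the global trilinear bound \eqref{4.010} to its time-restricted version using only the definition of the restriction norm $X^{T}_{s,b_1,\phi}$ as an infimum over extensions.

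First, fix $n\in\mathbf{N}$. By the definition of $\|v\|_{X^{T}_{s,b_{1},\phi}}$ we pick an extension $g_{n}$ defined for all $t\in\R$ with
\begin{eqnarray*}
g_{n}=v \ \ \mbox{for } t\in[0,T],\qquad \|g_{n}\|_{X_{s,b_{1},\phi}}\leq \|v\|_{X^{T}_{s,b_{1},\phi}}+\frac{1}{n}.
\end{eqnarray*}
Next, apply Lemma 4.2 to $g_{n}$; it holds under exactly the present hypotheses on $s$, $a$, $b_1$, $\epsilon$. This gives $\|\partial_{x}(g_{n}^{3})\|_{X_{s+a,-b_{1},\phi}}\leq C\|g_{n}\|^{3}_{X_{s,b_{1},\phi}}$ with $C$ independent of $n$.

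Since $g_{n}=v$ on $[0,T]$, we have $\partial_{x}(g_{n}^{3})=\partial_{x}(v^{3})$ for $t\in[0,T]$. So $\partial_x(g_n^3)$ is an admissible extension in the infimum defining $\|\partial_{x}(v^{3})\|_{X^{T}_{s+a,-b_{1},\phi}}$. Combining with the previous step yields
\begin{eqnarray*}
\|\partial_{x}(v^{3})\|_{X^{T}_{s+a,-b_{1},\phi}}\leq C\Big(\|v\|_{X^{T}_{s,b_{1},\phi}}+\frac{1}{n}\Big)^{3},
\end{eqnarray*}
and letting $n\to\infty$ gives \eqref{4.027}.

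The only point needing care is that the pointwise-in-time identity $g_n=v$ on $[0,T]$ passes to the cubic nonlinearity and its spatial derivative. This holds because the nonlinearity is local in time: the product and $\partial_x$ act at each fixed $t$. The rest is routine, since all the analytic difficulty is contained in Lemma 4.2.
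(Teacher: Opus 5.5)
Your proposal is correct and follows the paper's own route exactly. The paper proves Lemma 4.4 by repeating the argument of Lemma 4.3: take an extension $g_n$ with norm within $\frac{1}{n}$ of $\|v\|_{X^{T}_{s,b_1,\phi}}$, apply the global estimate of Lemma 4.2, use $\partial_x(g_n^3)$ as an admissible extension of $\partial_x(v^3)$, and let $n\to\infty$.
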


Using a proof method similar to that of Lemma 4.3, we can prove Lemma 4.4.

\bigskip

\bigskip

\section{Proof of Theorem  1.1}

\setcounter{equation}{0}

 \setcounter{Theorem}{0}

\setcounter{Lemma}{0}

\setcounter{Proposition}{0}

\setcounter{section}{5}
In this section, we prove Theorem 1.1.

\noindent Using Duhamel's principle, we can see that \eqref{1.01}-\eqref{1.02} is equivalent to the following integral equation
\begin{eqnarray}
&&u=U(t)f-\int_{0}^{t}U(t-s)\partial_{x}(u^{3})ds+\int_{0}^{t}U(t-s)\Phi_{1} dW(s).\label{5.01}
\end{eqnarray}
We define
\begin{eqnarray}
&&z:=U(t)f,\, w_{1}:=\int_{0}^{t}U(t-s)\Phi_{1} dW(s),\, u_{1}:=u-z-w_{1}.\label{5.02}
\end{eqnarray}
From \eqref{5.01}-\eqref{5.02}, we have
\begin{eqnarray*}
&&u_{1}(t)=-\int_{0}^{t}U(t-s)\partial_{x}\left[(u_{1}+z+w_{1})^{3}\right]ds.
\end{eqnarray*}
For $s\geq\frac{1}{4}$, $f\in H^{s}(\R)$ and $\Phi_{1}\in L_{2}^{0,s}$, by using \cite[Theorem1.1]{YHG2021},
we have that for almost surely $\omega\in \Omega$, there exist a $T_{\omega}>0$
such that $u\in C([0,T_{\omega}]; H^{s}(\R))\cap X_{s,b}^{T_{\omega}}$.

\noindent By using Lemma 2.6 and Lemma 3.1, we have
\begin{eqnarray}
&&\left\|\chi_{[0,T]}  w_{1}\right\|_{X_{s,b_{1},\varphi}}\leq C\left\|\psi(t) w_{1}\right\|_{X_{s,b_{1},\varphi}}<\infty, a.s..\label{5.03}
\end{eqnarray}
For $b_{1}=\frac{1}{2}-3\epsilon$, by using \eqref{2.018}, \cite[Lemma 3.3]{KPV1993Duke}, \eqref{5.03} and Lemma 4.3, we have
\begin{eqnarray}
&&\left\|u_{1}\right\|_{X_{s+a,b,\varphi}^{T_{\omega}}}=
\left\|\int_{0}^{t}U(t-s)\partial_{x}\left[\left(u_{1}+U(t)f+w_{1}\right)^{3}\right]ds\right\|_{X_{s+a,\frac{1}{2}+\epsilon,\varphi}^{T_{\omega}}}\nonumber\\
&&\leq CT_{\omega}^{-\epsilon}\left\|\partial_{x}\left[\left(U(t)f+u_{1}+w_{1}\right)^{3}\right]
\right\|_{X_{s+a,-\frac{1}{2}+\epsilon,\varphi}^{T_{\omega}}}\nonumber\\
&&\leq CT_{\omega}^{-\epsilon+\frac{1}{2}-\epsilon-b_{1}-\epsilon}\left\|\partial_{x}\left[\left(U(t)f+u_{1}+w_{1}\right)^{3}\right]
\right\|_{X_{s+a,-b_{1},\varphi}^{T_{\omega}}}\nonumber\\
&&=C\left\|\partial_{x}\left[\left(U(t)f+u_{1}+w_{1}\right)^{3}\right]\right\|_{X_{s+a,-b_{1},\varphi}^{T_{\omega}}}\nonumber\\
&&\leq C\left(\|U(t)f\|_{X_{s,b_{1},\varphi}^{T_{\omega}}}^{3}+\|u_{1}\|_{X_{s,b_{1},\varphi}^{T_{\omega}}}^{3}+\left\|\psi(t) w_{1}\right\|_{X_{s,b_{1},\varphi}}^{3}\right)\nonumber\\
&&\leq C\left(1+\|f\|_{H^{s}}^{3}+\left\|\psi(t) w_{1}\right\|_{X_{s,b_{1},\varphi}}^{3}\right)<\infty, \,a.s..\label{5.04}
\end{eqnarray}
From \eqref{5.04}, we have that \eqref{1.05} is valid.

The proof of Theorem 1.1 is completed.

\bigskip

\section{Proof of Theorem  1.2}

\setcounter{equation}{0}

 \setcounter{Theorem}{0}

\setcounter{Lemma}{0}

\setcounter{Proposition}{0}

\setcounter{section}{6}
In this section, we prove Theorem 1.2.

\noindent By using \cite[Theorem 1.1]{YYY2026}, \eqref{5.04} and  $a=2s-\frac{1}{2}$, $s>\frac{1}{3}$, we have
\begin{eqnarray}
&&\|u_{1}\|_{C([0,T_{\omega}];L_{x}^{\infty})}\leq C\left\|u_{1}\right\|_{X_{s+a,\frac{1}{2}+\epsilon,\varphi}^{T_{\omega}}}\nonumber\\
&&\leq C\left(1+\|f\|_{H^{s}}^{3}+\left\|\psi(t) w_{1}\right\|_{X_{s,b_{1},\varphi}}^{3}\right)<\infty, \,a.s..\label{6.01}
\end{eqnarray}
From \eqref{7.01}, for almost every $\omega\in \Omega$, we have
\begin{eqnarray}
&&\lim_{t\rightarrow0}\|u_{1}\|_{L_{x}^{\infty}}=0.\label{6.02}
\end{eqnarray}
It follows from \eqref{6.02} that
\begin{eqnarray}
&&\mathbb{P}\left(\left\{\omega: \lim_{t\rightarrow0}\|u_{1}\|_{L_{x}^{\infty}}=0\right\}\right)=1.\label{6.03}
\end{eqnarray}

The proof of Theorem 1.2 is completed.

\bigskip

\section{Proof of Theorem  1.3}

\setcounter{equation}{0}

 \setcounter{Theorem}{0}

\setcounter{Lemma}{0}

\setcounter{Proposition}{0}

\setcounter{section}{7}
In this section, we prove Theorem 1.3.

\noindent By using \cite[Lemma 3.6]{YYY2026}, \eqref{5.04} and  $a=2s-\frac{1}{2}$, $s>\frac{1}{3}$, we have
\begin{eqnarray}
&&\left\|u_{1}\right\|_{C([0,T_{\omega}];H^{\frac{1}{2}+\epsilon})}\leq C\left\|u_{1}\right\|_{X_{s+a,\frac{1}{2}+\epsilon,\varphi}^{T_{\omega}}}\nonumber\\
&&\leq C\left(1+\|f\|_{H^{s}}^{3}+\left\|\psi(t) w_{1}\right\|_{X_{s,b_{1},\varphi}}^{3}\right)<\infty, \,a.s..\label{7.01}
\end{eqnarray}
From \eqref{7.01}, for almost every $\omega\in \Omega$ and all $t\in[0,T_{\omega}]$, we have
\begin{eqnarray}
&&\lim_{|x|\rightarrow\infty}u_{1}=-\lim_{|x|\rightarrow\infty}\int_{0}^{t}U(t-s)\partial_{x}\left[(u_{1}+U(t)f+w_{1})^{3}\right]ds=0.\label{7.02}
\end{eqnarray}
It follows from \eqref{7.02} that
\begin{eqnarray}
&&\mathbb{P}\left(\left\{\omega: \forall t\in[0,T_{\omega}], \lim_{|x|\rightarrow\infty}u_{1}=0\right\}\right)=1.\label{7.03}
\end{eqnarray}

The proof of Theorem 1.3 is completed.

\bigskip

\section{Proof of Theorem  1.4}

\setcounter{equation}{0}

 \setcounter{Theorem}{0}

\setcounter{Lemma}{0}

\setcounter{Proposition}{0}

\setcounter{section}{8}
In this section, we prove Theorem 1.4.

\noindent Using Duhamel's principle, we can see that \eqref{1.03}-\eqref{1.04} is equivalent to the following integral equation
\begin{eqnarray}
&&v=V(t)g-\int_{0}^{t}V(t-s)\partial_{x}(v^{3})ds+\int_{0}^{t}V(t-s)\Phi_{2} dW(s).\label{8.01}
\end{eqnarray}
We define
\begin{eqnarray}
&&z_{1}:=V(t)g,\, w_{2}:=\int_{0}^{t}V(t-s)\Phi_{2} dW(s),\, v_{1}:=v-z_{1}-w_{2}.\label{8.02}
\end{eqnarray}
From \eqref{8.01}-\eqref{8.02}, we have
\begin{eqnarray*}
&&v_{1}(t)=-\int_{0}^{t}V(t-s)\partial_{x}\left[(v_{1}+z_{1}+w_{2})^{3}\right]ds.
\end{eqnarray*}
We also define
\begin{eqnarray*}
&&B_{T}:=\left\{v_{1}:\|v_{1}\|_{X_{s,b_{1},\phi}^{T}}\leq 1\right\},\\
&&\Gamma_{2}(v_{1}):=-\int_{0}^{t}V(t-s)\partial_{x}\left[(v_{1}+z_{1}+w_{2})^{3}\right]ds,
\end{eqnarray*}
where $0<T<1$.
Now, we prove that there exists a stopping time $T_{\omega}>0$ such that $\Gamma_{2}$ is a contraction mapping on $B_{T_{\omega}}$.

On the one hand, for $v_{1}\in B_{T}$, by using \eqref{2.020}, \eqref{2.021} and Lemma 2.6 and Lemma 4.4 with $a=0$, we have
\begin{eqnarray}
&&\left\|\Gamma_{2}(v_{1})\right\|_{X_{s,b_{1},\phi}^{T}}
=\left\|\int_{0}^{t}V(t-s)\partial_{x}\left[(v_{1}+z_{1}+w_{2})^{3}\right]ds\right\|_{X_{s,b_{1},\phi}^{T}}
\nonumber\\
&&\leq CT^{6\epsilon}\left(\|z_{1}\|_{X_{s,b_{1},\phi}^{T}}^{3}
+\|v_{1}\|_{X_{s,b_{1},\phi}^{T}}^{3}+\left\|w_{2}\right\|_{X_{s,b_{1},\phi}^{T}}^{3}\right)\nonumber\\
&&\leq CT^{6\epsilon}\left(\|z_{1}\|_{X_{s,b_{1},\phi}^{T}}^{3}+\|v_{1}\|_{X_{s,b_{1},\phi}^{T}}^{3}+\left\|\chi_{[0,T]} w_{2}\right\|_{X_{s,b_{1},\phi}}^{3}\right)\nonumber\\
&&\leq CT^{6\epsilon}\left(\|g\|_{H^{s}}^{3}+\|v_{1}\|_{X_{s,b_{1},\phi}^{T}}^{3}
+\left\|\chi_{[0,T]} w_{2}\right\|_{X_{s,b_{1},\phi}}^{3}\right)\nonumber\\
&&\leq CT^{6\epsilon}\left(1+\|g\|_{H^{s}}^{3}+\left\|\chi_{[0,T]} w_{2}\right\|_{X_{s,b_{1},\phi}}^{3}\right).\label{8.03}
\end{eqnarray}
On the other hand, for $v_{1}, v_{2} \in B_{T}$, by using \eqref{2.020}, \eqref{2.021} and Lemma 2.6 and Lemma 4.4 with $a=0$, we have
\begin{eqnarray}
&&\left\|\Gamma_{2}(v_{1})-\Gamma_{2}(v_{2})\right\|_{X_{s,b_{1},\phi}^{T}}=
\left\|\int_{0}^{t}V(t-s)\partial_{x}\left[(v_{1}+z_{1}+w_{2})^{3}-(v_{2}+z_{1}+w_{2})^{3}\right]ds\right\|_{X_{s,b_{1},\phi}^{T}}\nonumber\\
&&\leq CT^{6\epsilon}\left(\|z_{1}\|_{X_{s,b_{1},\phi}^{T}}^{2}+\|v_{1}\|_{X_{s,b_{1},\phi}^{T}}^{2}
+\|v_{2}\|_{X_{s,b_{1},\phi}^{T}}^{2}
+\left\| w_{2}\right\|_{X_{s,b_{1},\phi}^{T}}^{2}\right)\left(\|v_{1}-v_{2}\|_{X_{s,b_{1},\phi}^{T}}\right)\nonumber\\
&&\leq CT^{6\epsilon}\left(\|z_{1}\|_{X_{s,b_{1},\phi}^{T}}^{2}
+\|v_{1}\|_{X_{s,b_{1},\phi}^{T}}^{2}+\|v_{2}\|_{X_{s,b_{1},\phi}^{T}}^{2}+\left\|\chi_{[0,T]} w_{2}\right\|_{X_{s,b_{1},\phi}}^{2}\right)\left(\|v_{1}-v_{2}\|_{X_{s,b_{1},\phi}^{T}}\right)\nonumber\\
&&\leq CT^{6\epsilon}\left(2+\|g\|_{H^{s}}^{2}+\left\|\chi_{[0,T]} w_{2}\right\|_{X_{s,b_{1},\phi}}^{2}\right)\left(\|v_{1}-v_{2}\|_{X_{s,b_{1},\phi}^{T}}\right).\label{8.04}
\end{eqnarray}
We define
\begin{eqnarray*}
&&E_{\omega}:=\left(2+\|g\|_{H^{s}}+\left\|\chi_{[0,T]} w_{2}\right\|_{X_{s,b_{1},\phi}}\right),\, T_{\omega}:=\inf\left\{0<T<1: CT^{6\epsilon}E_{\omega}^{3}\geq\frac{1}{4}\right\}.
\end{eqnarray*}
\subsection{Proof of stopping time $T_{\omega}$ and that $\mathbb{P}\left(\{\omega: 0<T_{\omega}<1\}\right)=1$}

By using Lemma 2.6 and Lemma 3.2, we have
\begin{eqnarray}
&&\left\|\chi_{[0,T]} w_{2}\right\|_{X_{s,b_{1},\phi}}\leq C\left\|\psi(t) w_{2}\right\|_{X_{s,b_{1},\phi}}<\infty, a.s..\label{8.05}
\end{eqnarray}
It follows from Lemma 2.9, with $b_{1}=\frac{1}{2}-3\epsilon$, that the norm $\left\|\chi_{[0,T]} w_{2}\right\|_{X_{s,b_{1},\phi}}$
is almost surely continuous with respect to $T\in(0,1)$. Then, we have
\begin{eqnarray}
&&\mathbb{P}\left(\left\{\omega: \lim_{T\rightarrow0^{+}}\left\|\chi_{[0,T]} w_{2}\right\|_{X_{s,b_{1},\phi}}=0\right\}\right)=1.\label{8.06}
\end{eqnarray}
From \eqref{8.06}, we have
\begin{eqnarray}
&&\mathbb{P}\left(\left\{\omega: \lim_{T\rightarrow0^{+}}CT^{6\epsilon}E_{\omega}^{3}=0\right\}\right)=1.\label{8.07}
\end{eqnarray}
From \eqref{8.07}, we know that for any $\epsilon_{1}>0$, there exists a $\delta(\epsilon_{1})>0$ such that for all $0 <T<\delta(\epsilon_{1})$,
\begin{eqnarray}
&&CT^{6\epsilon}E_{\omega}^{3}\leq\epsilon_{1}.\label{8.08}
\end{eqnarray}
From \eqref{8.08} and the definition of $T_{\omega}$, we have
\begin{eqnarray*}
&&\mathbb{P}\left(\{\omega: 0<T_{\omega}<1\}\right)=1.
\end{eqnarray*}

\noindent We define
\begin{eqnarray}
&&K(T,\omega):=CT^{6\epsilon}E_{\omega}^{3}-\frac{1}{4}.\label{8.09}
\end{eqnarray}
By using \eqref{8.09}, we have
\begin{eqnarray}
&&T_{\omega}=\inf\left\{0<T<1: K(T,\omega)\geq0\right\}.\label{8.010}
\end{eqnarray}
Since $K(T,\omega)$ is almost surely continuous with respect to $T$, we have
\begin{eqnarray}
&&\{T_{\omega}\leq t\}=\bigcup_{q\in\mathbf{Q}\cap(0,t]}\{K(q,\omega)\geq0\}.\label{8.011}
\end{eqnarray}
Moreover, by using Lemma 3.9, we have that
 $\|\chi_{[0,T]} w_{2}\|_{X_{s,b_{1},\phi}}$ is $\mathcal{F}_{T}$-measurable, and it follows that for any rational number $q\in(0,t]$, we have
\begin{eqnarray}
&&\{K(q,\omega)\geq0\}\in\mathcal{F}_{q}\subset\mathcal{F}_{t}.\label{8.012}
\end{eqnarray}
From \eqref{8.011} and \eqref{8.012}, we have that $T_{\omega}$ is a stopping time.

\subsection{Contraction property of $\Gamma_{2}$}
From \eqref{8.03}-\eqref{8.04} and the definition of $T_{\omega}$,  it follows that  $\Gamma_{2}$ is a contraction mapping on $B_{T_{\omega}}$.

\subsection{Proof of $\mathbb{P}\left(\{\omega: v\in C([0,T_{\omega}]; H^{s}(\R))\}\right)=1$}
In this section, we prove that $v\in C([0,T_{\omega}]; H^{s}(\R))$ almost surely.  For $z$, by using \cite[Lemma 3.6]{YYY2026}, \eqref{2.020} and $b_{1}=\frac{1}{2}-3\epsilon$, we obtain
\begin{eqnarray}
&&\|z_{1}\|_{C([0,T_{\omega}]; H^{s})}\leq \|z_{1}\|_{X_{s,1-b_{1},\phi}}\leq C.\label{8.013}
\end{eqnarray}
From \eqref{8.013},  it follows that $z_{1}\in C([0,T_{\omega}]; H^{s}(\R))$ almost surely.

\noindent For $v_{1}$, by using \eqref{2.020}, \cite[Lemma 3.6]{YYY2026}, \cite[Lemma 3.3]{KPV1993Duke}, \eqref{8.05} and  Lemma 4.4 with $a=0$, $b_{1}=\frac{1}{2}-3\epsilon$, we have
\begin{eqnarray}
&&\left\|v_{1}\right\|_{C([0,T_{\omega}];H^{s})}\leq C\left\|\int_{0}^{t}V(t-s)\partial_{x}\left[(v_{1}+z_{1}+w_{2})^{3}\right]ds\right\|_{X_{s,\frac{1}{2}+\epsilon,\phi}^{T_{\omega}}}\nonumber\\
&&\leq CT_{\omega}^{-\epsilon}\left\|\partial_{x}\left[(z_{1}+v_{1}+w_{2})^{3}\right]\right\|_{X_{s,-\frac{1}{2}+\epsilon,\phi}^{T_{\omega}}}\nonumber\\
&&\leq CT_{\omega}^{-\epsilon+\frac{1}{2}-\epsilon-b_{1}-\epsilon}\left\|\partial_{x}\left[(z_{1}+v_{1}+w_{2})^{3}\right]\right\|_{X_{s,-b_{1},\phi}
^{T_{\omega}}}\nonumber\\
&&=C\left\|\partial_{x}\left[(z_{1}+v_{1}+w_{2})^{3}\right]\right\|_{X_{s,-b_{1},\phi}^{T_{\omega}}}\nonumber\\
&&\leq C\left(\|z_{1}\|_{X_{s,b_{1},\phi}^{T_{\omega}}}^{3}+\|v_{1}\|_{X_{s,b_{1},\phi}^{T_{\omega}}}^{3}+\left\|\psi(t) w_{2}\right\|_{X_{s,b_{1},\phi}}^{3}\right)\nonumber\\
&&\leq C\left(1+\|g\|_{H^{s}}^{3}+\left\|\psi(t) w_{2}\right\|_{X_{s,b_{1},\phi}}^{3}\right)<\infty,\,a.s..\label{8.014}
\end{eqnarray}
By using \eqref{8.014}, we have that
\begin{eqnarray*}
&&\mathbb{P}\left(\left\{\omega: v_{1}\in C([0,T_{\omega}]; H^{s}(\R))\right\}\right)=1.
\end{eqnarray*}
For $w_{2}$, by using $\Phi_{2}\in L_{2}^{0,s}$ and Lemma 3.8, we have that
\begin{eqnarray*}
&&\mathbb{P}\left(\left\{\omega: w_{2}\in C([0,T_{\omega}]; H^{s}(\R))\right\}\right)=1.
\end{eqnarray*}
From the above discussion, we obtain that
\begin{eqnarray*}
&&\mathbb{P}\left(\left\{\omega: v\in C([0,T_{\omega}]; H^{s}(\R))\right\}\right)=1.
\end{eqnarray*}
For the proof of the remaining part of Theorem 1.5, we refer to  \cite{BDT1999,R2012,R2014}.

The proof of Theorem 1.4 is completed.

\bigskip

\section{Proof of Theorem  1.5}

\setcounter{equation}{0}

 \setcounter{Theorem}{0}

\setcounter{Lemma}{0}

\setcounter{Proposition}{0}

\setcounter{section}{9}
In this section, we prove Theorem 1.5.

For $b_{1}=\frac{1}{2}-3\epsilon$, by using \eqref{2.020}, \cite[Lemma 3.3]{KPV1993Duke}, \eqref{8.05} and Lemma 4.4, we have
\begin{eqnarray}
&&\left\|v_{1}\right\|_{X_{s+a,b,\phi}^{T_{\omega}}}=
\left\|\int_{0}^{t}V(t-s)\partial_{x}\left[\left(v_{1}+V(t)g+w_{2}\right)^{3}\right]ds\right\|_{X_{s+a,\frac{1}{2}+\epsilon,\phi}^{T_{\omega}}}\nonumber\\
&&\leq CT_{\omega}^{-\epsilon}\left\|\partial_{x}\left[\left(V(t)g+v_{1}+w_{2}\right)^{3}\right]
\right\|_{X_{s+a,-\frac{1}{2}+\epsilon,\phi}^{T_{\omega}}}\nonumber\\
&&\leq CT_{\omega}^{-\epsilon+\frac{1}{2}-\epsilon-b_{1}-\epsilon}\left\|\partial_{x}\left[\left(V(t)g+v_{1}+w_{2}\right)^{3}\right]
\right\|_{X_{s+a,-b_{1},\phi}^{T_{\omega}}}\nonumber\\
&&=C\left\|\partial_{x}\left[\left(V(t)g+v_{1}+w_{2}\right)^{3}\right]\right\|_{X_{s+a,-b_{1},\phi}^{T_{\omega}}}\nonumber\\
&&\leq C\left(\|V(t)g\|_{X_{s,b_{1},\phi}^{T_{\omega}}}^{3}+\|v_{1}\|_{X_{s,b_{1},\phi}^{T_{\omega}}}^{3}+\left\|\psi(t) w_{2}\right\|_{X_{s,b_{1},\phi}}^{3}\right)\nonumber\\
&&\leq C\left(1+\|g\|_{H^{s}}^{3}+\left\|\psi(t) w_{2}\right\|_{X_{s,b_{1},\phi}}^{3}\right)<\infty, \,a.s..\label{9.01}
\end{eqnarray}
From \eqref{9.01}, we have that \eqref{1.010} is valid.

The proof of Theorem 1.5 is completed.

\bigskip

\section{Proof of Theorem  1.6}

\setcounter{equation}{0}

 \setcounter{Theorem}{0}

\setcounter{Lemma}{0}

\setcounter{Proposition}{0}

\setcounter{section}{10}
In this section, we prove Theorem 1.6.

\noindent By using \cite[Theorem 1.1]{YYY2026}, \eqref{9.01} and $a=2s-\frac{1}{2}$, $s>\frac{1}{3}$, we have
\begin{eqnarray}
&&\|v_{1}\|_{C([0,T_{\omega}];L_{x}^{\infty})}\leq C\left\|v_{1}\right\|_{X_{s+a,\frac{1}{2}+\epsilon,\phi}^{T_{\omega}}}\nonumber\\
&&\leq C\left(1+\|g\|_{H^{s}}^{3}+\left\|\psi(t) w_{2}\right\|_{X_{s,b_{1},\phi}}^{3}\right)<\infty, \,a.s..\label{10.01}
\end{eqnarray}
From \eqref{10.01}, for almost every $\omega\in \Omega$, we have
\begin{eqnarray}
&&\lim_{t\rightarrow0}\|v_{1}\|_{L_{x}^{\infty}}=0.\label{10.02}
\end{eqnarray}
It follows from \eqref{10.02} that
\begin{eqnarray}
&&\mathbb{P}\left(\left\{\omega: \lim_{t\rightarrow0}\|v_{1}\|_{L_{x}^{\infty}}=0\right\}\right)=1.\label{11.03}
\end{eqnarray}

The proof of Theorem 1.6 is completed.

\bigskip

\section{Proof of Theorem  1.7}

\setcounter{equation}{0}

 \setcounter{Theorem}{0}

\setcounter{Lemma}{0}

\setcounter{Proposition}{0}

\setcounter{section}{11}
In this section, we prove Theorem 1.7.

\noindent By using \cite[Lemma 3.6]{YYY2026}, \eqref{9.01} and $a=2s-\frac{1}{2}$, $s>\frac{1}{3}$, we have
\begin{eqnarray}
&&\left\|v_{1}\right\|_{C([0,T_{\omega}];H^{\frac{1}{2}+\epsilon})}\leq C\left\|v_{1}\right\|_{X_{s+a,\frac{1}{2}+\epsilon,\phi}^{T_{\omega}}}\nonumber\\
&&\leq C\left(1+\|g\|_{H^{s}}^{3}+\left\|\psi(t) w_{2}\right\|_{X_{s,b_{1},\phi}}^{3}\right)<\infty, \,a.s..\label{11.01}
\end{eqnarray}
From \eqref{11.01}, for almost every $\omega\in \Omega$ and all $t\in[0,T_{\omega}]$, we have
\begin{eqnarray}
&&\lim_{|x|\rightarrow\infty}v_{1}=-\lim_{|x|\rightarrow\infty}\int_{0}^{t}V(t-s)\partial_{x}\left[(v_{1}+V(t)g+w_{2})^{3}\right]ds=0.\label{11.02}
\end{eqnarray}
It follows from \eqref{11.02} that
\begin{eqnarray}
&&\mathbb{P}\left(\left\{\omega: \forall t\in[0,T_{\omega}], \lim_{|x|\rightarrow\infty}v_{1}=0\right\}\right)=1.\label{11.03}
\end{eqnarray}

The proof of Theorem 1.7 is completed.

\bigskip

\leftline{\large \bf Acknowledgments}
Wei Yan is supported by the Natural Science Foundation of Henan province (No. 262300421062)
and the National Natural Science Foundation of China (Nos. 12371245 and 12571256).

\end{document}